% 22 marzo 16
%%%%%%%%%%%%%%%%%%%%%%%%%%%%%%%%%%%%%%
%
%   Latex
%
%%%%%%%%%%%%%%%%%%%%%%%%%%%%%%%%%%%%%

%%%%%%%%%%%%%%%%%%%%%%%%%%%%%%%%%%%%%%%%%%%%%%%%%%%%%%%%%%%%%%%%%%%%%%%%%
\documentclass[11pt,a4paper,reqno]{amsart}
 \usepackage{amsfonts,amsthm, amscd, epsfig, amsmath, amssymb,enumerate}
  \usepackage[dvips]{psfrag}
 \numberwithin{equation}{section}

\author{Alessandra  Faggionato}
\address{Alessandra Faggionato. Dipartimento di Matematica ``G. Castelnuovo", Universit\`a   ``La
  Sapienza''. P.le Aldo Moro  2, 00185  Roma, Italy. e--mail:
  faggiona@mat.uniroma1.it}

%%%%%%%%%%%%%%%%%%%%%%%%%%%%%%%%%%%%%%%%%%%%%%%%%%%%%%%%%%%%%%%%%%%%%%%%%%%%%%
\setlength{\oddsidemargin}{5mm} \setlength{\evensidemargin}{5mm}
\setlength{\textwidth}{150mm} \setlength{\headheight}{0mm}
\setlength{\headsep}{12mm} \setlength{\topmargin}{0mm}
\setlength{\textheight}{220mm} \setcounter{secnumdepth}{2}

\newtheorem{theo}{Theorem}

\newtheorem{prop}{Proposition}
\newtheorem{lemma}{Lemma}
\newtheorem{coro}{Corollary}
\newtheorem{rem}{Remark}

\newcommand{\CC}{{\mathbb C}}\newcommand{\NN}{{\mathbb N}}
\newcommand{\RR}{{\mathbb R}}
\newcommand{\ZZ}{{\mathbb Z}}

%%%%%%%%%%%%%%%%%%%%%%%%%%%%%%%%%%%%%%%%%%%%%%%%%%%%%%%%%%%%%%%%%%%%%%%%%%%%%%
%%%%%%%%%% Calligraphic letters
%%%%%%%%%%%%%%%%%%%%%%%%%%%%%%%%%%%%%%%%%%%%%%%%%%%%%%%%%%%%%%%%%%%%%%%%%%%%%%

\newcommand{\Aa}{{\mathcal A}}
\newcommand{\Pp}{{\mathcal P}}
\newcommand{\Qq}{{\mathcal Q}}
\newcommand{\PP}{{\mathbf P}}

\newcommand{\EE}{{\mathbf E}}

\newcommand{\Ff}{{\mathcal F}}
\newcommand{\Ww}{{\mathcal W}}
\newcommand{\Ss}{{\mathcal S}}

\newcommand{\Nn}{{\mathcal N}}
\newcommand{\Cc}{{\mathcal C}}

\newcommand{\Ll}{{\mathcal L}}

%\newcommand{\proof}[1]{\noindent {\bf Proof of #1. }}
%\newcommand{\finpro}{\hfill $\Box$}

%%%%%%%%%%%%%%%%%%%% alessandra's  newcommands %%%%%%
%\newcommand{\s}{{\sigma}}
%\newcommand{\g}{{\gamma}}

%\newcommand{\la}{\lambda}

\newcommand{\II}{{\mathbb I}}
%
%%%%%%%%%%%%%%%%%%%%%%%%% GRECO
%
\let\a=\alpha \let\b=\beta   \let\d=\delta  \let\e=\varepsilon
 \let\g=\gamma     
\let\l=\lambda
   
\let\o=\omega
\let\p=\pi  

\let\s=\sigma  
\let\t=\tau
  
\let\D=\Delta   \let\G=\Gamma
\let\L=\Lambda

%

%
%%%%%%%%%%%%%%%%%%%%%%%%%%%%%%%%%%%%%%%%%%%%%
\begin{document}
\title[$h$--slopes  of drifted Brownian
motion]{The alternating marked point process of $h$--slopes  of the
drifted Brownian motion}
\begin{abstract}
We show that the slopes between  $h$--extrema of the drifted 1D
Brownian motion  form a stationary alternating marked point process,
extending the result of J. Neveu and J. Pitman for the non drifted
case. Our analysis covers the results on the statistics of
$h$--extrema obtained by P. Le Doussal, C. Monthus and D. Fisher via
a Renormalization Group analysis and  gives a complete description
of the slope between $h$--extrema covering the origin by means of
the Palm--Khinchin theory. Moreover, we analyze the behavior of the
Brownian motion near its  $h$--extrema.

\bigskip

\noindent {\em 2000 Mathematics Subject Classification:}
60J65, % Brownian motion
60G55. % point processes

 \noindent
{\em Key words:} Brownian motion, marked point processes,
Palm--Khinchin theory, fluctuation theory.

\end{abstract}

\maketitle

\date{today}

\section{Introduction}
Let $B$ be a two--sided standard  Brownian motion with drift $-\mu$.
  Given $h>0$ we  say that $B$
admits an {\em $h$--minimum} at $x\in \RR$, and that $x$ is a point
of $h$--minimum,  if there exist $u<x<v$ such that $ B_t \geq B_x $
for all $t \in [u,v]$, $B_u \geq B_x +h$ and $B_v\geq B_x+h$.
Similarly, we say that $B$ admits an {\em $h$--maximum} at $x\in
\RR$, and that $x$ is a point of $h$--maximum,  if there exist
$u<x<v$ such that $ B_t \leq  B_x $ for all $t \in [u,v]$, $B_u \leq
B_x -h$ and $B_v\leq B_x-h$. We  say that $B$ admits an {\em
$h$--extremum} at $x\in \RR$, and that $x$ is a point of
$h$--extremum, if $x$ is a point of $h$--minimum or a point of
$h$--maximum. Finally,  the truncated  trajectory $B$ going from an
$h$--minimum to an $h$--maximum will be called upward $h$--slope,
while the truncated  trajectory $B$ going from an $h$--maximum to an
$h$--minimum will be called downward $h$--slope.

\smallskip

Our first object of investigation is the statistics  of $h$--slopes.
 The non drifted case $\mu=0$  has been studied in
\cite{NP}. Here we assume  $\mu\not =0$ and  show  (see Theorem
\ref{napoleone})  that the statistics of $h$--slopes is well
described by a stationary alternating    marked simple point process
on $\RR$ whose points are the points of $h$--extrema of the Brownian
motion, and each point $x$ is marked by the $h$--slope going from
$x$ to the subsequent point of $h$--extremum. We will show that the
$h$--slopes are independent and specify the laws $P^\mu_+$,
$P^\mu_-$ of upward $h$--slopes and  downward  $h$--slopes non
covering the origin, respectively. The $h$--slope covering the
origin shows a different distribution  that can be derived by means
of the  Palm--Khinchin theory \cite{DVJ}, \cite{FKAS}.

\smallskip

 Our proof is based both on fluctuation theory for L\'{e}vy processes,
 and on the
theory of marked simple point processes. The part of fluctuation
theory follows strictly the scheme of \cite{NP} and can be
generalized to spectrally one--sided  L\'{e}vy processes, i.e. real
valued random processes with stationary independent increments and
with  no positive jumps or with no negative jumps \cite{B}[Chapter
VII]. In fact, some of the identities of Lemma \ref{davide} and
Proposition \ref{calcoli} below have already been obtained with more
sophisticated methods for general spectrally one--sided  L\'{e}vy
processes (see \cite{Pi}, \cite{AKP}, \cite{C} and references
therein). On the other hand, the description of the  $h$--slopes as
a stationary alternating marked simple point process allows to use
the very powerful Palm--Khinchin theory, which extends  renewal
theory and leads to a complete description of the $h$--slope
covering the origin. This analysis can be easily extended to more
general L\'{e}vy processes, as the ones treated in \cite{C}.

\smallskip

As discussed in Section \ref{rengroup}, our results concerning the
statistics of $h$--extrema of drifted Brownian motion correspond to
the ones obtained in \cite{DFM} via a non rigorous Real Space
Renormalization Group method applied to Sinai random walk with a
vanishing bias. In addition of a rigorous derivation, we are able
here  to describe also the statistics of the $h$--slopes, lacking in
\cite{DFM}.

\smallskip

In section \ref{autunno} (see Theorem \ref{bonaparte}), we analyze
the behavior of the drifted Brownian motion  around  its
$h$--extrema. While in the non--drifted case a generic  $h$--slope
non covering the origin behaves  in proximity of its  extremes  as a
3--dimensional Bessel process, in the drifted case it behaves  as a
process with a cothangent drift, satisfying the SDE
\begin{equation}\label{silurino}
\begin{cases}
& d X_t = d\b_t \pm \mu \coth \left( \mu X _t \right) dt\,,\qquad
t\geq 0\,,\\
& X_0 =0 \,,
\end{cases}
\end{equation}
 where $\b_t$ is an independent standard Brownian motion
and the sign in the r.h.s. depends on the kind of $h$--slope
(downward or upward) and on the kind of $h$--extrema ($h$--minimum
or $h$--maximum). In addition, we show that the process
(\ref{silurino}) is simply the Brownian motion on $[0,\infty)$,
starting at the origin, with drift $\pm \mu$, Doob--conditioned to
hit $+\infty$ before $0$.

\smallskip

The interest in the statistics of $h$--slopes and their behavior
near to the extremes comes also from the fact that, considering the
diffusion in a drifted Brownian potential,  the piecewise linear
path obtained by connecting the $h$--extrema of the Brownian
potential is the effective potential for the diffusion at large
times \cite{BCGD}.

%\club \club As application of the above results, in Section
%\ref{applicazioni} we prove rigorously that the diffusion in a
%drifted Brownian motion potential is well approximated by the
%directed trap model \cite{BD} and discuss aging phenomena.

% Our nanalysis is based on the above results concerning the statistics of
% $h$--slopes as well as the characterization in terms of a SDE  of
% the behavior of the drifted Brownian motion in the neighborhood of
% an $h$--extremum (see Section \ref{autunno}).

\section{Statistics of $h$--slopes of  drifted Brownian
motion}\label{statistica1000}

 Given $\mu, x  \in
\RR$ we denote by $\PP ^\mu _x$  the law on $C(\RR , \RR)$ of the
standard  two--sided   Brownian motion $B$  with drift $-\mu$ having
value $x$ at time zero, i.e. $B_t=x+B_t^*-\mu t $ where $B^*:
\RR\rightarrow \RR$ denotes the  two--sided Brownian motion s.t.
$B_t$ has  expectation zero and variance $t$.
 We denote the  expectation w.r.t. $\PP^\mu_x$  by
$\EE_x^\mu$. If $\mu = 0$ we simply write $\PP_x$, $\EE_x $.

\smallskip

Recall the definitions of $h$--maximum, $h$--minimum and
$h$--extremum given in the Introduction. It is simple to verify that
$\PP_x^\mu$--a.s. the set of points of  $h$--extrema  is locally
finite, unbounded from below and from above, and that points of
$h$--minima alternate with points of $h$--maxima.  The $h$--slope
between two consecutive points of $h$--extrema $\a$ and $\b$ is
defined as the truncated trajectory $\g:=\bigl( B_t\,:\, t \in
[\a,\b]\bigr)$. We call  it an {\sl upward} slope if $\a$ is a point
of $h$--minimum (and consequently $\b$ is a point of $h$--maximum),
otherwise we call it {\sl downward} slope. The length $\ell (\g)$
and the height $h(\g)$ of the slope $\g$ are  defined as $\ell (\g)=
\b-\a$ and $h(\g)=|\g(\b)-\g(\a)|$, respectively. Moreover, to the
slope $\g$ we associate the translated path
 $\theta (\g):= \bigl( B_{t+\a}-B_\a \,:\, t \in [0,\b-\a]\bigr)$.
 With some abuse of notation (as in the Introduction) we call also
 $\theta (\g)$ the $h$--slope between the points of $h$--extrema
 $\a$ and $\b$. When the context can cause some ambiguity, we will
 explicitly distinguish between the $h$--slope $\g$ and the
 translated path $\theta (\g)$.

 \smallskip

Finally, we introduce the following notation:
 given
$\a\in \RR$, the constant $\hat \a$ is defined
 as
\begin{equation}\label{cappuccio}
\hat \a = \a +\mu^2/2\,.
\end{equation}

\subsection{The building blocks of the $h$--slopes}.
 Given a two--sided Brownian $B$ with law $\PP^\mu_0$ we define the
 random variables $b_t, \t,\b,\s$ as follows (see  figure
 \ref{coniglio2}):
\begin{equation}\label{esameinf}
\begin{cases}
b_t = \min \left \{ B_s \,:\, 0\leq s\leq t \right\}\,,\\
\t = \min \left\{ t\geq 0\,:\, B_t=b_t +h \right\}\, ,\\
\b=b_\t=\min \left\{ B_s\,:\, 0\leq s\leq \t\right\}\, ,\\
\s= \max\left\{s\,:\, s\leq \t,\; B_s  =\b\right\} \, .
\end{cases}
\end{equation}
Note that $\PP^\mu _0$--a.s. there  exists a unique time $s\in
[0,\t]$ with $B_s=\b$, which by definition coincides with  $\s$.

%%%%%%%%%%%%%%%%%%%%%%%%%%%%%%%%% figure
\begin{figure}[!ht]
    \begin{center}
       \psfrag{a}[l][l]{$\beta$}
       \psfrag{b}[l][l]{$\sigma$}
       \psfrag{c}[l][l]{$\tau$}
      \psfrag{d}[l][l]{$h$}
       \includegraphics[width=7cm]{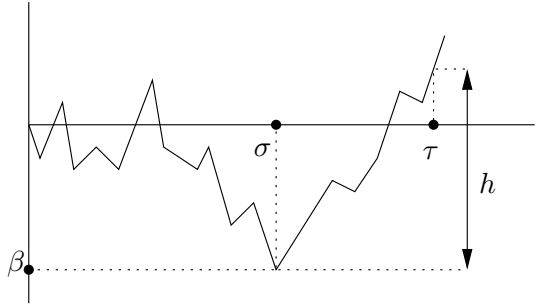}
      \caption{The random variables $\beta $, $\sigma  $, $\tau $.}
    \label{coniglio2}
    \end{center}
  \end{figure}

Our analysis of the statistics of $h$--slopes for the drifted
Brownian motion is based on the following  lemma which extends to
the drifted case  the lemma in Section 1 of \cite{NP}:
%%%%%%%%%%%%%%%%%%%%%%%%% lemma
\begin{lemma}\label{davide}
Let $\mu \not = 0$, $\hat\a>0$ and $x>0$. Under   $\PP^\mu _0$, the
two trajectories
$$\left(B_t\, ,\,0\leq t\leq \s \right),\qquad
\left(B_{\s+t}-\b\,,\, 0\leq t \leq \t-\s \right)$$ are independent;
in particular  $(\b,\s)$ and $\t-\s$ are independent.

Furthermore $-\b $ is exponentially distributed with mean
\begin{equation}\label{lapla}
\EE_0^\mu (-\b)=\frac{ \sinh (\mu h )}{ \mu e^{-\mu  h }},
\end{equation}
and
\begin{equation}
  \EE_0^\mu\left[ \exp (-\a \s )\,\large{|}\, \b =-x \right]=
\exp \left\{ -x\left[ \sqrt{2\hat\a} \coth \left(\sqrt{2\hat \a}
h\right) -\mu \coth(\mu h )\right] \right\}
  .\label{lapla1}
  \end{equation}
  In particular,  $    \EE_0^\mu\left( \exp (-\a \s )\right)$ is finite if and only if   \begin{equation}\label{mom_finito}
    \sqrt{2\hat \a} \coth \left( \sqrt{2\hat\a}h \right) >\mu  \,.
  \end{equation}
If (\ref{mom_finito}) is fulfilled, then
  \begin{equation}\label{lapla2}
    \EE_0^\mu\left( \exp (-\a \s )\right)=
     \frac{ \mu e^{-\mu h }}{
  \sinh (\mu h )  \left(
\sqrt{2\hat \a} \coth \left( \sqrt{2\hat\a}h \right) -\mu \right) }.
   \end{equation}
  Finally,  it holds
    \begin{equation}
    \EE_0^\mu \left( \exp \left( -\a(\t-\s)\right) \right) =\frac{\sqrt{2\hat \a}}{\mu}
\frac{  \sinh (\mu h ) }{ \sinh\left(\sqrt{2\hat \a} h \right) }
.\label{lapla3}
\end{equation}
%while for all $\a $ such that $\hat\a >0$ and  $\sqrt{2\hat \a} \coth \left( \sqrt{2\hat\a}\right) -\mu >0$ it holds
%\begin{equation}
 %\EE_0^\mu\left( \exp (-\a \s )\right)= \frac{ \mu e^{-\mu h }}{
%\left( \sinh (\mu h ) \right)  \left(
%\sqrt{2\hat \a} \coth \left( \sqrt{2\hat\a}\right) -\mu \right) }
 %,\label{lapla2}.
 %\end{equation}
\end{lemma}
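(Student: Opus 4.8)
The plan is to reduce everything to Itô's excursion theory for the reflected process $Y_t:=B_t-b_t\ge 0$ under $\PP_0^\mu$. First I would note that $-b_t$ is the Skorokhod reflection term, so I take it as the local time $L_t:=-b_t$ of $Y$ at $0$; since $Y$ spends zero Lebesgue time at $0$, the trajectory of $B$ is recovered from $L$ and the successive excursions of $Y$ away from $0$. With this dictionary, $\sigma$ is exactly the left endpoint of the first excursion that reaches height $h$, $\beta=-L_\sigma$, the pre-$\sigma$ path is the concatenation of the excursions of height $<h$ carrying local-time label $<L_\sigma$, and the translated post-$\sigma$ path $(B_{\sigma+t}-\beta)_{0\le t\le\tau-\sigma}=(Y_{\sigma+t})_{0\le t\le\tau-\sigma}$ is that first tall excursion run until it first hits $h$. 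I would record that $\PP_0^\mu$-a.s. $\sigma$ is well defined (unique argmin, as stated above). Writing $\Pp$ for the Poisson point process of excursions with characteristic measure $n$, I split $\Pp$ into the independent processes of \emph{small} excursions $\{\bar\epsilon<h\}$ and \emph{tall} excursions $\{\bar\epsilon\ge h\}$.

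For the independence statement and the law of $\beta$, the tall excursions form a Poisson process in the local-time variable with rate $\lambda:=n(\bar\epsilon\ge h)$, so $L_\sigma=-\beta$ is $\mathrm{Exp}(\lambda)$ while the first tall excursion $\epsilon^\ast$ (hence the post-$\sigma$ path) is independent of its label $L_\sigma$ and of the whole small-excursion process. Since the pre-$\sigma$ path, and in particular the pair $(\beta,\sigma)$, is a measurable function of $L_\sigma$ together with the small excursions of label $<L_\sigma$, whereas the post-$\sigma$ path and $\tau-\sigma=T_h(\epsilon^\ast)$ depend only on $\epsilon^\ast$, the claimed independence of the two trajectories (and of $(\beta,\sigma)$ from $\tau-\sigma$) follows. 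To evaluate $\lambda$ I would use the scale function $s(x)=(e^{2\mu x}-1)/2\mu$ of the drift-$(-\mu)$ motion together with the entrance-law normalization $n(\bar\epsilon\ge h)=1/(s(h)-s(0))$; this gives $\lambda=\mu e^{-\mu h}/\sinh(\mu h)$ and $\EE_0^\mu(-\beta)=1/\lambda$, which is \eqref{lapla}.

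The transform of $\sigma$ is then a Campbell computation. Given $\beta=-x$, the small excursions of label $<x$ form a Poisson process with intensity $dl\,n(d\epsilon;\bar\epsilon<h)$, and $\sigma$ is the sum of their durations $\zeta$, so the exponential formula yields $\EE_0^\mu[e^{-\alpha\sigma}\mid\beta=-x]=\exp(-x\,\Psi(\alpha))$ with $\Psi(\alpha):=n\bigl(1-e^{-\alpha\zeta};\bar\epsilon<h\bigr)$. To compute $\Psi$ I set $\nu:=\sqrt{2\hat\alpha}$ and solve $\tfrac12 f''-\mu f'=\alpha f$, whose solutions are $e^{(\mu\pm\nu)x}$. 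From $\EE_x^\mu[e^{-\alpha T_0}]=e^{(\mu-\nu)x}$ and the entrance law I get $n(1-e^{-\alpha\zeta})=\nu-\mu$, while from $\EE_x^\mu[e^{-\alpha T_h};T_h<T_0]=e^{\mu x}\sinh(\nu x)/(e^{\mu h}\sinh(\nu h))$ I get $n(e^{-\alpha T_h};\bar\epsilon\ge h)=\nu/(e^{\mu h}\sinh(\nu h))$; the strong Markov property of $n$ at $T_h$ (the excursion after $T_h$ being a drift-$(-\mu)$ motion from $h$ killed at $0$, contributing the factor $\EE_h^\mu[e^{-\alpha T_0}]=e^{(\mu-\nu)h}$) gives $n(e^{-\alpha\zeta};\bar\epsilon\ge h)=\nu e^{-\nu h}/\sinh(\nu h)$, whence $n(1-e^{-\alpha\zeta};\bar\epsilon\ge h)=\lambda-\nu e^{-\nu h}/\sinh(\nu h)$. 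Subtracting and using $\mu\coth(\mu h)-\mu=\mu e^{-\mu h}/\sinh(\mu h)=\lambda$ collapses the expression to $\Psi(\alpha)=\nu\coth(\nu h)-\mu\coth(\mu h)$, which is \eqref{lapla1}. Integrating $e^{-x\Psi}$ against the $\mathrm{Exp}(\lambda)$ law of $-\beta$ gives $\EE_0^\mu(e^{-\alpha\sigma})=\lambda/(\lambda+\Psi)=\lambda/(\nu\coth(\nu h)-\mu)$, finite exactly when $\nu\coth(\nu h)>\mu$; these are \eqref{lapla2} and \eqref{mom_finito}. Finally \eqref{lapla3} is immediate, since $\tau-\sigma=T_h(\epsilon^\ast)$ has law $n(\,\cdot\,;\bar\epsilon\ge h)/\lambda$ evaluated at $T_h$, so $\EE_0^\mu(e^{-\alpha(\tau-\sigma)})=n(e^{-\alpha T_h};\bar\epsilon\ge h)/\lambda=\frac{\sqrt{2\hat\alpha}}{\mu}\frac{\sinh(\mu h)}{\sinh(\sqrt{2\hat\alpha}\,h)}$.

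The delicate part is not the arithmetic but the excursion-theoretic bookkeeping of the first paragraph: fixing the local time as $L=-b$, justifying the entrance-law formula $n(F)=\lim_{x\downarrow0}\EE_x^\mu[F]/(s(x)-s(0))$ in the correct normalization (so that the computed $\lambda$ indeed matches \eqref{lapla}), and rigorously splitting $\Pp$ into independent thin and tall parts so that the stated \emph{pathwise} independence—not merely independence of the scalar functionals—holds. Once this is in place, all the displayed identities reduce to solving the one-dimensional equation $\tfrac12 f''-\mu f'=\alpha f$ and to the strong Markov property of $n$ at $T_h$, and I would cite \cite{B} for the underlying excursion-theoretic and spectrally one-sided background.
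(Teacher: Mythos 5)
Your proposal follows the same overall strategy as the paper's proof: It\^o excursion theory for $Y=B-b$ with local time $L=-b$, the splitting of the excursion point process into the independent Poisson processes of excursions of height $\geq h$ and $<h$, the identification of $-\beta$ as the exponential first-arrival label of a tall excursion, the exponential (Campbell) formula for $\EE_0^\mu\left[e^{-\alpha\sigma}\,|\,\beta=-x\right]$, and the law $n(\,\cdot\,;\,\bar\epsilon\geq h)/n(\bar\epsilon\geq h)$ for the post-$\sigma$ piece; this is exactly the skeleton of the paper's argument in Section \ref{pierpa}. Where you genuinely diverge is in evaluating the excursion-measure quantities that the paper isolates as Lemma \ref{kuka}: you get them from the scale-function normalization $n(F)=\lim_{x\downarrow 0}\EE^\mu_x[F]/s(x)$ with $s(x)=(e^{2\mu x}-1)/(2\mu)$, the strong Markov property of $n$ at $T_h$, and a subtraction (all excursions minus tall excursions), whereas the paper computes everything self-containedly from the explicit entrance-law density (\ref{roma}) of Lemma \ref{japan} via $t\downarrow 0$ Gaussian asymptotics, together with the Girsanov hitting-time identities of Lemma \ref{corvo}. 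Your arithmetic is correct throughout: your $\lambda$ agrees with (\ref{kuka1}), your $\Psi$ collapses to $\sqrt{2\hat\alpha}\coth(\sqrt{2\hat\alpha}h)-\mu\coth(\mu h)$ as in (\ref{lapla1}), and your ratio for $\tau-\sigma$ reproduces (\ref{kuka3}). The trade-off is that your route leans on the normalization identity relating $n$, $s$ and the choice $L=-b$, which you flag but do not prove; that identity is precisely what Lemma \ref{japan} and the entrance law encode, and it is where the paper spends its effort.

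Two technical points are genuinely missing rather than merely deferred. First, the lemma is stated for all $\hat\alpha>0$, hence also for $\alpha\in(-\mu^2/2,0)$, and this is the only regime where the finiteness criterion (\ref{mom_finito}) can fail (for $\alpha\geq 0$ one has $\sqrt{2\hat\alpha}\coth(\sqrt{2\hat\alpha}h)>\sqrt{2\hat\alpha}\geq|\mu|$ automatically). For negative $\alpha$ the exponential formula for Poisson point processes cannot be invoked off the shelf: the integrand $1-e^{-\alpha H}$ is then unbounded, and one must first establish $\int_{U^{h,-}}\bigl|1-e^{-\alpha H(f)}\bigr|\,n(df)<\infty$; the paper does this with the truncation $H_m$, the bound through (\ref{leone1}), and monotone convergence in the derivation of (\ref{sale}). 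Your proof does not address this, yet it is essential for the ``if and only if'' in (\ref{mom_finito}). Second, your subtraction $n(1-e^{-\alpha\zeta};\,\bar\epsilon<h)=n(1-e^{-\alpha\zeta})-n(1-e^{-\alpha\zeta};\,\bar\epsilon\geq h)$ silently adopts the convention $e^{-\alpha\zeta}:=0$ on infinite excursions (even for $\alpha<0$) and uses that $n$-a.e.\ excursion of height $<h$ has finite lifetime, i.e.\ (\ref{kuka4}); without that fact the quantity you feed into Campbell's formula would not be the Laplace exponent of $\sigma$ (indeed $\sigma$, and $\tau$ itself, would be infinite with positive probability when $\mu<0$). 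Both gaps are fixable along the paper's lines, so your proof is correct in structure and computation but incomplete at exactly the places where the paper invokes Lemma \ref{japan}, proves (\ref{kuka4}), and runs the truncation argument.
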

%\begin{rem} It is simple to check that, on $[0,\infty)$,
%$x \coth x\geq x $ and   the function  $x\coth x $ is increasing, thus implying that  $x\coth x \geq 1$. Hence, assuming $\hat\a \geq 0$,   the condition $\sqrt{2\hat \a} \coth \left( \sqrt{2\hat\a}\right) >\mu $ is   fulfilled if $\a\geq 0$ or if  $\mu <1 $.
%\end{rem}
%In (\ref{lapla2}) observe that, since $\coth z \geq 1 $ and $\sqrt{2\hat
%%%%%%%%%%%%%%%%%%%%%%%%%%%%%%%%%%
 The proof of the above lemma is based on excursion theory and
indentities  concerning hitting times of the drifted Brownian
motion. It  will be given in Section \ref{pierpa}.

%\smallskip

%From the above  Laplace transforms of $\s$ and $\t-\s$ we can derive
%the following result, which will be useful when applying the local
%central limit theorem in the proof of Theorem \ref{napoleone}:

\medskip

\subsection{The probability  measures $P^{\mu}_+$ and $P^{\mu}_- $ on $h$--slopes}

%We start our analysis by introducing a particular alternating
%stationary  marked simple point process on the real line $\RR$
%\cite{FKAS} and  \cite{DVJ}. The points of this simple point process
%are marked, labeled, by paths in

We define  the path space   $\Ww$  as the set
$$
\Ww= \cup _{T\geq 0} C( [0,T])\,.
$$
Given $\g \in \Ww$, we define $\ell (\g)$  as the nonnegative number
such that $\g \in C[0, \ell (\g) ]$ and we define the path $\g^*:
[0,\infty)\rightarrow \RR $ as
$$
\g^*_t =\begin{cases} \g_t & \text{ if } 0\leq t \leq \ell (\g)\,,\\
\g_{\ell(\g)}   & \text{ if } t \geq \ell (\g)\,. \end{cases}
$$
Then the space $\Ww$ is a Polish space endowed of the metric $d_\Ww$
defined as
$$
d_{\Ww}(\g_1, \g_2)= |\ell(\g_1)-\ell (\g_2)|+ \|\g^*_1- \g^*_2
\|_\infty \,.
$$

 On $\Ww$ we define the Borel probability measures $P^\mu_+$,
$P^\mu_- $ as follows. Let $B,B'$ be independent Brownian motions
with law $\PP^\mu_0$. Recall the definition (\ref{esameinf}) of $\t,
\b, \s$ and define $b_t', \t',\b', \s'$ as (see figure
\ref{pezzobis}):
\begin{equation}\label{esameinfbis}
\begin{cases}
b'_t=\max\left\{ B_s'\,:\, 0\leq s \leq t \right\}\,,\\
\t' = \min \left\{ t\geq 0\,:\, B'_t=b'_t -h \right\}\, ,\\
\b'=b'_\t=\max \left\{ B'_s\,:\, 0\leq s\leq \t'\right\}\, ,\\
\s'= \max\left\{s\,:\, s\leq \t',\; B'_s  =\b'\right\} \, .
\end{cases}
\end{equation}
Then $P^\mu_+$
 is the law of the path $\g$, with $\ell(\g)= \t-\s+\s'$, defined as
\begin{equation}
\g_t\,=\,
\begin{cases}
B_{\s+t}-\b \,, & \text{ if }  t \in [0, \t-\s ]\, ,\\
%h+ B_{t- (\t-\s) }  \,, & \text{ if } t \in [\t-\s, \t],
B'_{t-(\t-\s)}+h \,, & \text{ if } t \in [\t-\s, \t-\s +\s']\,,
\end{cases}
\end{equation}
%where $B$ is a  Brownian motion with law $\PP^\mu_0$.
while  $P^\mu_- $ is the law of the path $\g$,  with $\ell (\g)=\t'
-\s' +\s $, defined as
\begin{equation}
\g_t\,=\,
\begin{cases}
B'_{\s'+t}-\b '\,, & \text{ if } t \in [0, \t'-\s']\,,\\
B_{t-(\t'-\s') }- h \,, & \text{ if } t \in [\t'-\s', \t'-\s'+\s]\,.
\end{cases}
\end{equation}
  Note that $P^\mu_- $  equals the law
of the path $-\g $ if $ \g$ is chosen with law $P^{-\mu}_+ $.

%%%%%%%%%%%%%%%%%%%%%%%%%%%%%%%%%%%%%%  figure
\begin{figure}[!ht]
    \begin{center}
       \psfrag{a}[l][l]{$\beta'$}
       \psfrag{b}[l][l]{$\sigma'$}
       \psfrag{c}[l][l]{$\tau'$}
     \psfrag{d}[l][l]{$h$}
       \includegraphics[width=7cm]{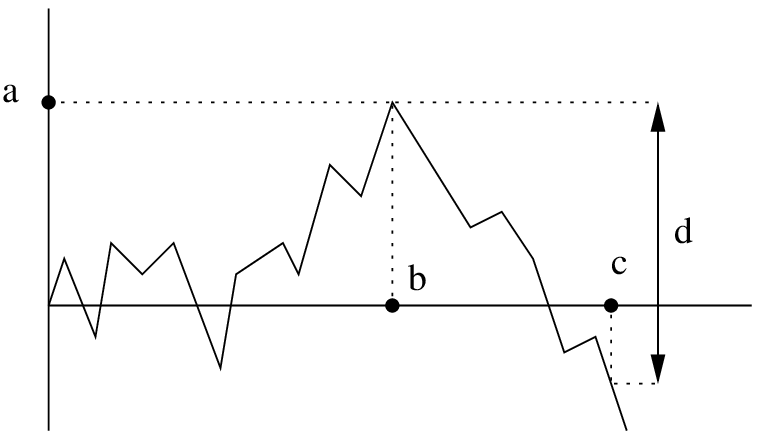}
      \caption{The random variables $\beta ' $, $\sigma ' $, $\tau '$.}
    \label{pezzobis}
    \end{center}
  \end{figure}

We  introduce two  disjoint subsets $\Ww_+$ and $\Ww_-$ of $\Ww$:
\begin{align*}
& \Ww_+ = \left\{ \g \in\Ww\,:\, \g_0 =\min \{\g_t\,:\, t \in
[0,\ell(\g)]\}=0 \,, \;\; \g_{\ell(\g)} = \max \{\g_t\,:\, t \in
[0,\ell(\g)]\}\geq h
\right\}\,, \\
& \Ww_- = \left\{ \g \in\Ww\,:\, \g_0 =\max \{\g_t\,:\, t \in
[0,\ell(\g)]\}=0 \,, \;\; \g_{\ell(\g)} = \min  \{\g_t\,:\, t \in
[0,\ell(\g)]\}\leq -  h  \right\}\,.
\end{align*}
%\begin{align*}
%& \Ww_+ = \left\{ \g \in\Ww\,:\, \exists a>0 \text{ s.t. }\g _t \geq
%\g_0=0 \; \forall t \in [0,a],\; \g_a \geq h,\; \g_{\ell(\g)} \geq h \right\}\,, \\
%& \Ww_- = \left\{ \g \in\Ww\,:\, \exists a>0 \text{ s.t. }\g _t \leq
%\g_0=0 \; \forall t \in [0,a],\; \g_a \leq -h,\; \g_{\ell(\g)} \leq-h \right\}\,.
%\end{align*}
%\begin{align*}
%& \Ww_+ = \left\{ \g \in\Ww\,:\, \g_t \geq \g_0=0 \;\; \forall t \in
%\bigl[0, \min \{s\geq 0 :  \g_s=h\} \bigr]\, \right\}\,,\\
%& \Ww_- = \left\{ \g \in\Ww\,:\, \g_t \leq \g_0=0 \;\; \forall t \in
%\bigl[0, \min \{s\geq 0 :  \g_s=-h\} \bigr]\, \right\}\,.
%\end{align*}
Then the probability measure $P^{\mu}_\pm$ is concentrated on
$\Ww_\pm$. Below we will prove that, given the two--sided BM with
law $\PP^\mu_0$,   $P^{\mu}_+$ is the law of the generic upward
$h$--slope not covering the origin, while  $P^{\mu}_-$ is the law of
the generic downward $h$--slope not covering the origin.

\smallskip

We collect in what follows some results derived from Lemma
\ref{davide} and straightforward computations, which will be useful
in what follows:

%\begin{align*}
%& \Ww_+ = \left\{ \g \in\Ww\,:\, u:=\min\{t\in [0, L(\g)]\,:\, \g_t
%= \g_0+h\}<\infty , \; \g_t \geq \g_0 =0\; \forall t \in [0,
%u]\right\}\,,\\
%& \Ww_- = \left\{ \g \in\Ww\,:\, u:=\min\{t\in [0, L(\g)]\,:\, \g_t
%= \g_0-h\}<\infty , \; \g_t \leq  \g_0 =0\; \forall t \in [0,
%u]\right\}\,.
%\end{align*}
%Note that $P^{(\pm)}\bigl(\Ww_\pm\bigr)=1$ and $ \Ww_+\cap \Ww_-=
%\emptyset $.

\begin{prop}\label{calcoli} Fix $\mu\not =0$.
Let $(\ell_+, \zeta_+ )$ be the  random vector distributed as $
\bigl(\ell (\g), \g _{\ell(\g)}-h \bigr)$ where $\g$ is chosen with
law $P^{\mu}_+$ and let $(\ell_-, \zeta_-)$ be the random vector
distributed as $\bigl( \ell(\g), -(\g_{\ell(\g)}+h)  \bigr)$ where
$\g$  is chosen with law $P^{\mu}_-$.

\smallskip

Then $\zeta_+$, $\zeta_-$ are  exponential variables of mean
\begin{equation}\label{medie}
 \EE  (\zeta_+) = \frac{ \sinh (\mu h )}{
\mu e^{\mu  h }}, \qquad  \EE (\zeta_-) = \frac{ \sinh (\mu h )}{
\mu e^{-\mu  h }}\,.
\end{equation}
 Fix $a$ such that  $
\hat \a:=\a+\mu^2/2 >0$. Then  for all $x>0$
\begin{equation}\label{uffa}
\EE  \left( e^{-\a \ell_{\pm}} \,\Large{|}\,\zeta_\pm = x
\right)=\frac{ \sqrt{2\hat \a}}{\mu } \frac{\sinh(\mu h ) }{\sinh
\left( \sqrt{2\hat \a } h \right)} \exp \left\{ -x
\left(\sqrt{2\hat\a} \coth \left(\sqrt{2\hat \a} h\right) -\mu
\coth(\mu h ) \right) \right\}.
\end{equation}
In particular,  the expectation  $ \EE \left( e^{-\a \ell_{\pm}-\l
\zeta_{\pm } }\right)$ is finite if and only if
\begin{equation}\label{mom_condbis}
 \sqrt{2\hat \a}\coth\left(\sqrt{2\hat \a}h\right) +(\l\pm \mu) >0.
 \end{equation}
If (\ref{mom_condbis}) is fulfilled, then
\begin{equation}\label{mazinga}
 \EE  \left( e^{-\a \ell_{\pm}-\l  \zeta_{\pm } }\right)= \frac{
\sqrt{2\hat \a } e^{\pm  \mu h  }}{
 \sqrt{2\hat \a}\cosh\left(\sqrt{2\hat \a}h\right) +(\l\pm \mu) \sinh  \left(\sqrt{2\hat \a}h\right)
}.
\end{equation}
Hence
\begin{align}
& \EE\left( \ell_+  \right) =\mu ^{-2}
\left(\mu h -\sinh(\mu h ) e^{-\mu h }  \right),\label{zeta1}\\
&  \EE\left( \ell_-\right) =\mu^{-2}  \left(e^{\mu h }\sinh(\mu h
)-\mu h  \right)\,.\label{zeta2}
%&  \EE_0^\mu\left( \ell_-\right)+ \EE_0^\mu\left( \ell_+ \right)=
%\frac{2}{\mu^2 }\sinh ^2 (\mu h ).\label{zeta3}
\end{align}

\medskip

Consider  $\ell:=\ell_-+\ell_+$, where $\ell_-$ and $\ell_+$  are
chosen independently. Then,
\begin{equation}\label{mangobis}
\EE(\ell)= \frac{2}{\mu^2 }\sinh ^2 (\mu h )\,.
\end{equation}
Given $\a\in \RR$, $ \EE \bigl(e^{-\a \ell}\bigr ) $ is finite if
and only if
\begin{equation}\label{mom_condtris}
\begin{cases}
& \hat \a: =\a+\mu^2 /2>0\, ,  \\
& 2\a \cosh^2 (\sqrt{2\hat\a} h )+\mu^2 >0\, .
\end{cases}
\end{equation}
If (\ref{mom_condtris}) is fulfilled, then
\begin{equation}\label{mango}
\EE\bigl(e^{-\a \ell}\bigr ) = \frac{2\hat\a}{ 2\a \cosh^2
(\sqrt{2\hat\a} h )+\mu^2 }.
\end{equation}
\end{prop}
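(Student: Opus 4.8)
The plan is to extract everything from Lemma \ref{davide} by tracking how the building blocks assemble into the two slope laws, and then to carry out the Laplace-transform bookkeeping. First I would identify the random ingredients. By the definition of $P^\mu_+$, a path $\g$ with this law is built from an independent pair: the ascending piece $(B_{\s+t}-\b : 0\le t\le \t-\s)$ of length $\t-\s$, followed by the descending piece $(B'_{t-(\t-\s)}+h: t\in[\t-\s,\t-\s+\s'])$ of length $\s'$. Hence $\ell_+ = (\t-\s)+\s'$ with the two summands independent, and the terminal height excess is $\zeta_+=\g_{\ell(\g)}-h = -\b'$, which lives entirely on the primed (maximum-type) block. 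The primed variables are the mirror images under $\mu\mapsto-\mu$, $B\mapsto-B$ of the unprimed ones, using the remark that $P^\mu_-$ is the law of $-\g$ when $\g\sim P^{-\mu}_+$; so I would read off the law of $-\b'$ directly from \eqref{lapla} with $\mu$ replaced by $-\mu$, giving $\EE(\zeta_+)=\sinh(\mu h)/(\mu e^{\mu h})$, and symmetrically $\zeta_-$ from \eqref{lapla} itself, yielding \eqref{medie}.

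Next I would assemble the conditional transform \eqref{uffa}. Conditioning on $\zeta_\pm=x$ fixes the terminal height of the primed block (for $P^\mu_+$) to $-\b'=x$, but leaves the unprimed ascending block $\t-\s$ untouched, since that block is independent of $(\b',\s')$. The factor $e^{-\a\ell_\pm}$ splits multiplicatively across the two independent pieces. For the block carrying no height conditioning I would insert the unconditioned transform \eqref{lapla3} of $\t-\s$; for the block carrying the conditioning I would insert the conditional transform \eqref{lapla1} of $\s$ (respectively $\s'$), evaluated at the mirror parameters. Multiplying \eqref{lapla3} by \eqref{lapla1} reproduces exactly the product structure of \eqref{uffa}: the prefactor $\frac{\sqrt{2\hat\a}}{\mu}\frac{\sinh(\mu h)}{\sinh(\sqrt{2\hat\a}h)}$ comes from \eqref{lapla3}, and the exponential factor $\exp\{-x(\sqrt{2\hat\a}\coth(\sqrt{2\hat\a}h)-\mu\coth(\mu h))\}$ from \eqref{lapla1}. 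The sign symmetry between the $+$ and $-$ cases is handled once and for all by the $-\mu$ substitution, which is why a single formula \eqref{uffa} covers both.

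From \eqref{uffa} the joint transform \eqref{mazinga} follows by integrating out $x$ against the exponential density of $\zeta_\pm$ from \eqref{medie}; the integral $\int_0^\infty e^{-\l x}\,e^{-cx}\,(\text{density})\,dx$ is elementary and converges precisely when the total rate in the exponent is positive, which is the finiteness condition \eqref{mom_condbis}. Then \eqref{zeta1}--\eqref{zeta2} are obtained by differentiating \eqref{mazinga} in $\a$ at $\a=0$, $\l=0$ (equivalently $\hat\a=\mu^2/2$, where $\sqrt{2\hat\a}=\mu$, so all hyperbolic arguments collapse to $\mu h$), and reading off $\EE(\ell_\pm)$. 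Finally, since $\ell=\ell_-+\ell_+$ with the two independent, I would multiply the two copies of \eqref{mazinga} at $\l=0$: $\EE(e^{-\a\ell})=\EE(e^{-\a\ell_-})\EE(e^{-\a\ell_+})$. The two single-block transforms have denominators $\sqrt{2\hat\a}\cosh\pm\mu\sinh$ (hyperbolic arguments $\sqrt{2\hat\a}h$) and numerators $\sqrt{2\hat\a}e^{\mp\mu h}$; their product's denominator factors as $2\hat\a\cosh^2-\mu^2\sinh^2 = 2\a\cosh^2+\mu^2$ after using $\hat\a=\a+\mu^2/2$ and $\cosh^2-\sinh^2=1$, while the numerator collapses to $2\hat\a$, giving \eqref{mango}; the convergence region \eqref{mom_condtris} is exactly where this denominator stays positive, and \eqref{mangobis} is the $\a\to0$ derivative.

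The routine part is the hyperbolic algebra; the one place demanding care is the \emph{correct matching of blocks to transforms} in \eqref{uffa} — recognizing that conditioning on $\zeta_\pm$ acts only on the height-bearing block and that the length transform therefore factors as (unconditioned $\t-\s$ transform) times (conditioned $\s$ transform), with the $\pm$ sign absorbed by the $\mu\mapsto-\mu$ symmetry. Getting the independence and the parameter substitutions right there is the crux; everything downstream is differentiation and factoring.
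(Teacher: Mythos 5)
Your proposal is correct and takes essentially the same route as the paper, which offers no separate proof beyond the remark that Proposition \ref{calcoli} follows from Lemma \ref{davide} and straightforward computations: your decomposition $\ell_+=(\t-\s)+\s'$ (resp.\ $\ell_-=(\t'-\s')+\s$) into blocks from the two independent Brownian motions, the reduction of the primed block to Lemma \ref{davide} via the $\mu\mapsto-\mu$ mirror symmetry, and the subsequent integration against the exponential density, differentiation at $\a=0$, and factoring $2\hat\a\cosh^2-\mu^2\sinh^2=2\a\cosh^2+\mu^2$ are exactly the intended computations. One cosmetic remark: with the paper's conventions $\b'\geq 0$ is a maximum, so $\zeta_+=\g_{\ell(\g)}-h=\b'$ rather than $-\b'$; your sign slip is harmless since it is exactly absorbed by the $\mu\mapsto-\mu$ mirror identification you invoke, and all resulting formulas (\ref{medie})--(\ref{mango}) come out correctly.
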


%%%%%%%%%%%%%%%%%%%%%%%%%%%%% dim %%%%%%
\begin{rem}
Due to the identity
$$ 2\a \cosh^2 (\sqrt{2\hat\a} h )+\mu^2=\frac{ \cosh ^2
 (\sqrt{2\hat \a}h )}{h^2}
\left( 2\hat\a h^2 -\mu^2  h^2 \tanh^2 (\sqrt{2\hat\a} h )
\right)\,,
$$
by straightforward computations one can check that for $\mu h >1$
condition
 (\ref{mom_condtris}) is fulfilled if and only if
$\a > -\mu^2 /2 + y_*^2 / (2h^2)$, where  $y_*$ is the only positive
solution of the equation $ y=\mu h \tanh (y)$. If $\mu h \leq 1$
then condition  (\ref{mom_condtris}) is fulfilled if and only if
$\a> -\mu^2 /2 $.
\end{rem}

\bigskip

 \subsection{The stationary alternating marked simple point process
 $\Pp^\mu$\,.}\label{pediatra}

 We denote by $\Nn$ the space of sequences
$\xi=\bigl\{( x_i, \g_i) \, :\, i \in \ZZ\bigr\}$  such that 1)
$(x_i , \g_i ) \in \RR\times \Ww$, 2)   $x_i<x_{i+1}$, 3)  $x_0 \leq
0< x_1$ and 4) $\lim _{i\rightarrow \pm \infty } x_i =\pm \infty$.
In what follows,  $\xi$ will be often identified  with the counting
measure $ \sum _{i\in \ZZ} \d_{(x_i, \g_i)}$ on $\RR \times \Ww$.

 $\Nn$ is a measurable
space with  $\s$-algebra of measurable sets  generated by
$$
\left\{ \xi \in \Nn \,:\, \xi (A \times B) =j \right\}\,,\qquad
A\subset \RR \text{ Borel},\; B \subset \Ww \text{ Borel},\; j \in
\NN \,.
$$
%where
%$$
%\left\{\;( x_i, \g_i)_{i\in \ZZ}\,:\,  \# \bigl\{i\in \ZZ\,:\, x_i
%\in A, \g _i \in B \bigr\} \,= \, j\;\right\}\,,
%$$
% $j\in \NN$,  $A\subset \RR$ and  $B\subset \Ww$ are bounded Borel
%sets.
One can characterize the  above $\s$--algebra  as follows. Consider
the space $\Ss:=(0,\infty) \times (0,\infty)^{\ZZ } \times \Ww
^{\ZZ}$ as a measurable space with $\s$--algebra of measurable sets
given by the  Borel subsets associated to the product topology. Call
$\Ss'$ the subset of $\Ss$ given by the elements where the first
entry is not larger than the entry with index $0$ of the factor
space $(0,\infty)^\ZZ$.  Then by the same arguments leading to
\cite{DVJ}[Proposition 7.1.X] one can prove that the map
\begin{equation}\label{pmassa}
\Nn  \ni \{(x_i , \g_i )\} _{i \in \ZZ} \rightarrow \{x_1\} \times
\{\t_i \}_{i\in \ZZ} \times \{
 \g_i \} _{i \in \ZZ} \in \Ss ' \,,
\qquad \t_i:= x_{i+1}-x_i\,,
\end{equation} is bijective and both ways measurable. We note that
the introduction of $\Ss'$ is due to the constrain $x_1\leq \t_0$.

\smallskip

Let us define $\Pp ^\mu_{0,\pm} $ as the law of the sequences
$\{(x_i, \g_i)\}_{i\in \ZZ}  \in \Nn $  such that
\begin{itemize}
\item  $\{\g_i\}_{i\in \ZZ}$ are independent  random paths,
\item $\{\g_{2i} \}_{i \in \ZZ}$
are i.i.d. random paths with law $P^\mu_\pm $,
\item $\{\g_{2i+1}\}_{i \in \ZZ}$ are i.i.d. random paths with law $P^\mu_\mp$,
\item   $x_0=0$,
\item
$x_{i+1}-x_i = \ell (\g_i)$.
\end{itemize}
Note that $\Pp^\mu_{0,\pm} $ is concentrated on the measurable
subset $\Nn_0$ defined as
$$ \Nn _0 \,=\, \left\{\, \{ (x_i,\g_i)\}_{i\in \ZZ}\,:\, x_0 =0
\right\}\,.
$$
Finally we consider the convex combination
$$
\Pp ^\mu _0 = \frac{1}{2} \Pp ^\mu _{0,+}+ \frac{1}{2} \Pp^\mu
_{0,-}\,.
$$
Let $\theta:\Nn  \rightarrow \Nn $ and, for all $t \in \RR$, let
$T_t : \Nn \rightarrow \Nn $ be the maps defined as
$$
\theta \xi = \sum _{i \in \ZZ} \d _{(x_i-x_1, \g_i)}\,,\qquad  T_t
\xi = \sum _{i \in \ZZ} \d _{(x_i+t , \g_i)}\, \qquad \text{ if }
\xi = \sum_{i \in \ZZ} \d _{(x_i, \g_i)}\,.
$$
We stress that the above  translation map $T_t$   coincides with the
map $T_{-t}$ of  \cite{FKAS} and with the map $S_{-t}$ of
\cite{DVJ}.
 A probability measure $\Qq$ on $\Nn$ is called stationary if
$T_t \Qq (A):= \Qq (T_tA)=\Qq(A)$ for all $t\in \RR$ and all
$A\subset \Nn$ measurable, while it is called $\theta$--invariant if
$\theta \Qq( A):= \Qq ( \theta A)= \Qq (A)$ for all  $A\subset \Nn $
measurable.

Note that $\Pp^\mu _0$ is $\theta$--invariant. Moreover, due to
(\ref{mangobis}) in  Lemma \ref{davide},
\begin{equation}
%\begin{multline*}
\EE_{\Pp^\mu _0} ( x_1) = \EE _{\Pp^\mu  _0} (\ell (\g_0))= \EE
(\ell)/2=  \sinh ^2 (\mu h ) /\mu^2\,.
% \EE_0^\mu (\t+\t')/2 =
% \left(\EE_0^\mu (\t)+ \EE_0^{-\mu} (\t) \right)/2=
%\sinh (\mu h) ^2\,. %\end{multline*}
\end{equation}
Hence, due to the  Palm--Khinchin theory (see  Theorem 1.3.1 and
formula (1.2.15) in \cite{FKAS}, and  Theorem 12.3.II in \cite{DVJ})
there exists a unique stationary measure $\Pp^\mu$ on $\Nn $ such
that
\begin{equation}\label{spaghetti}
\begin{split}
\Pp ^\mu (A) & = \frac{ 2}{\EE (\ell)   } \EE_{\Pp^\mu _0} \left[
\int _0 ^{x_1} \chi \left( T_{-t} \left( \{x_i,\g_i\}_{i\in
\ZZ}\right) \in A\, \right) dt \right]\,,\\
& =\frac{  \mu^2}{  \sinh ^2 (\mu h ) } \EE_{\Pp^\mu _0} \left[ \int
_0 ^{x_1} \chi \left( T_{-t} \left( \{x_i,\g_i\}_{i\in \ZZ}\right)
\in A\, \right) dt \right]\,, \end{split}
\end{equation}
where $\chi (\cdot )$ denotes the characteristic function. We simply
say that {\sl $\Pp ^\mu $ is the law of the  stationary alternating
  marked simple point process on $\RR$ with alternating mark laws
given by $P^\mu _+, P^\mu_-$}. The probability measure $\Pp^\mu_0$
is the so called {\sl Palm distribution} associated to $\Pp^\mu$.

\medskip

%By means of (\ref{spaghetti}) (see also the discussion after Theorem
%12.3.II in \cite{DVJ}) one can give a simple description of
%$\Pp^\mu$. Indeed, one can derive from (\ref{spaghetti}) that the
%point process $\{(x_i,\g_i)\}_{i\in \ZZ}$ with  law $\Pp^\mu$ can be
%represented as follows: fix a sequence of paths $\{\g_i\}_{i\in
%\ZZ}$ with law $\Pp^\mu_0$, afterward fix a value $X$ with uniform
%probability in $(0,\ell (\g_0))$ and define $\{x_i\}_{i \in \ZZ}$ as
%the double sequence such that $x_0 := -X$ and $ x_{i+1}-x_i := \ell
%(\g_i)$.\smallskip

   One can write
\begin{equation}\label{vito1}
\Pp^\mu (\cdot) = \Pp ^\mu ( \cdot \,|\, \g_0 \in \Ww_+) \Pp
^\mu(\g_0 \in \Ww_+)+\Pp^\mu  ( \cdot \,|\, \g_0 \in \Ww_-) \Pp^\mu
(\g_0 \in \Ww_-)\,.
\end{equation}
From (\ref{spaghetti}) we obtain that
%\begin{equation*}
%\label{vito2}
\begin{equation}\label{parto1}
 \Pp ^\mu( \g_0 \in \Ww _\pm )=\frac{2\EE_{\Pp^\mu _0} \left[ x_1 \chi ( \g_0 \in \Ww _\pm) \right]}{ \EE (\ell) } =
\frac{\EE_{P^\mu _\pm} \bigl( \ell (\g) \bigr)}{ \EE (\ell) }=
\frac{ \EE(\ell_\pm )}{\EE (\ell) } \,.
\end{equation}
Hence,  by (\ref{zeta1}) and (\ref{zeta2}) of Lemma \ref{davide}, we
can conclude that
%\begin{align}
%& \Pp ^\mu( \g_0 \in \Ww _+  )\,=\,
%  \frac{\mu h - \sinh (\mu h ) e^{-\mu h } }{2 \sinh ^2 (\mu h ) }\,,\\
 %& \Pp ^\mu( \g_0 \in \Ww _-  )\,=\, \frac{e^{\mu h } \sinh (\mu h ) - \mu h }{2 \sinh ^2 (\mu h )}\,.
%\end{align}
\begin{equation}
 \Pp ^\mu( \g_0 \in \Ww _\pm   )\,=\,
  \frac{\pm \mu h \mp  \sinh (\mu h ) e^{\mp \mu h } }{2
\sinh ^2 (\mu h ) }\,.
\end{equation}
In order to   describe the conditional probability measure
 $\Pp^\mu ( \cdot \,|\, \g_0 \in \Ww_\pm)$ we first observe that
$x_1$ and $\{\g_i \}_{i\in \ZZ} $ univocally determined the set $
\{(x_i, \g_i )\}_{i \in \ZZ}$. Moreover from
 (\ref{spaghetti}) we derive  that, given Borel subsets $A\subset
 \RR$, $B_j\subset \Ww$ for $-m\leq j\leq n $, it holds
 \begin{multline*}
\Pp^\mu ( x_1\in A, \g_j \in B_j \; \forall j: -m\leq j\leq n \,|\,
\g_0 \in \Ww_\pm)=\\\EE_{\Pp ^\mu_{0,\pm}} \left(\int _0 ^{x_1}
\chi(x_1-t\in A)dt ,  \g_j \in B_j  \; \forall j: -m\leq j\leq
n\right)/ \EE (\ell_\pm) =\\
\EE _{P_\pm ^\mu} \left(\int _0 ^{\ell (\g) } \chi( t\in
A)dt , \g\in B_0 \right) \bigl[ \prod _{\substack{-m\leq j \leq n\\
j\not = 0 , \text{ odd} }}P ^\mu _\mp   (B_j) \bigr] \bigl[\prod
_{\substack{-m\leq j \leq n\\ j\not = 0 , \text{ even} }}P ^\mu _\pm
(B_j)  \bigr] /\EE(\ell _\pm) \,.
\end{multline*}
This identity implies that
  under $\Pp^\mu
( \cdot \,|\, \g_0 \in \Ww_\pm)$  the random paths $\g_i$, $i\in
\ZZ$, are independent,  the paths $\{\g_{2i} \}_{i\in \ZZ \setminus
\{0\}}$ have common law $P^\mu _\pm $ while the paths $\{\g_{2i+1}
\}_{i\in \ZZ }$ have common law $P^\mu_\mp $ and that  the path
$\g_0$ has law
\begin{equation}
\Pp^\mu \bigl(\g_0 \in A \,|\, \g_0 \in \Ww _\pm \bigr)\,=\,
\frac{\EE_{ P^\mu _\pm }\bigl(\,\ell (\g) \chi ( \g \in A)\,
\bigr)}{ \EE_{P^\mu_\pm} \bigl( \ell (\g) \bigr)}= \frac{\EE_{ P^\mu
_\pm }\bigl(\,\ell (\g) \chi ( \g \in A)\, \bigr)}{ \EE(\ell_\pm)}
\,.
\end{equation}

%\begin{multline}
%\Pp^\mu \bigl(\g_0 \in A \,|\, \g_0 \in \Ww _\pm \bigr)\,=\, \EE_{
%P^\mu _\pm }\bigl(\,\ell (\g_0) \chi ( \g_0 \in A)\, \bigr)\Big{/}
%\EE_{P^\mu_\pm} \bigl( \ell (\g) \bigr)=\\ \frac{\mu^2}{ \pm \mu h
%\mp \sinh ( \mu h ) e^{\mp \mu h } } \EE_{ P^\mu _\pm }\bigl(\,\ell
%(\g_0) \chi ( \g_0 \in A)\, \bigr)\,.
%\end{multline}

Finally, we claim that under  $\Pp^\mu \bigl(\cdot \,|\, \g_0 \in
\Ww _\pm \bigr)$ $x_1$ has probability density function on
$[0,\infty)$  given by $(1-F_\pm (x))/\EE(\ell_\pm) $ where
$$ F_\pm (x)= \PP  ( \ell_\pm  \leq x )\,, \qquad x \geq 0 \,.
$$
Indeed, given $x\geq 0$, due to  (\ref{spaghetti}) and
(\ref{parto1})  we can write
\begin{multline*}
\Pp^\mu \bigl( x_1 \geq x \,|\, \g_0 \in \Ww _\pm \bigr)=\Pp^\mu
\bigl( x_1 \geq x \,,\, \g_0 \in \Ww _\pm \bigr)/ \Pp ^\mu (\g_0 \in
\Ww _\pm )=\\ \EE_{\Pp ^\mu _{0,\pm} } \left( \int _0 ^{x_1} \chi(
x_1 -t \geq x ) dt \right) / \EE(\ell _\pm)= \EE\left( (\ell_\pm -x
) \chi(\ell_\pm \geq x)\right ) /\EE(\ell_\pm )=\\
 \EE\left( \int _x
^\infty \chi (\ell_\pm \geq z ) dz\right) /\EE(\ell_\pm) =\left(
\int _x ^\infty  \PP (\ell _\pm \geq  z) dz \right) / \EE(\ell_\pm )
\,.
\end{multline*}

In particular, we point out that due to (\ref{vito1}) and
(\ref{parto1})  under $\Pp^\mu$ the random variable $x_1$ has
probability density on $[0,\infty)$ given by $\left(2-F_+ (x)-
F_-(x) \right) /\EE(\ell) $.

\subsection{The statistics of $h$--slopes}

We can finally prove the main theorem of this section:
%\medskip
%Similarly to \cite{NP},  from Lemma \ref{davide} we can derive the statistics of the
%$h$--extrema of the drifted Brownian motion.

%%%%%%%%%%%%%%
 \begin{theo}\label{napoleone}
Let $B$ be the  Brownian motion with  law $\PP^\mu_0$   and let
 $\bigl(\mathfrak{m}_i \bigr)_{i\in \ZZ}$ be the sequence of points of  $h$--extrema of $B$, increasingly
 ordered, with $ \mathfrak{m}_0\leq 0 < \mathfrak{m}_1$.
 For each $i \in \ZZ$ define the $h$--slope $\g_i$ as
 $$
 \g_i =\left( B_t - B_{\mathfrak{m}_i} \, :\, 0 \leq t \leq
 \mathfrak{m}_{i+1}-\mathfrak{m}_i\right)\,.
 $$
Then the marked point process \begin{equation}\label{violino}\left\{
(\mathfrak{m}_i, \g_i)\,:\, i \in \ZZ\right\} \end{equation}has law
$\Pp^\mu $.
\end{theo}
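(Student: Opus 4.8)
The plan is to identify the law $Q$ of the marked point process (\ref{violino}) with $\Pp^\mu$ by using the characterization of $\Pp^\mu$ as the \emph{unique} stationary measure on $\Nn$ whose Palm distribution is $\Pp^\mu_0$, exactly the uniqueness furnished by the Palm--Khinchin theorems of \cite{FKAS} and \cite{DVJ} that were already invoked to define $\Pp^\mu$. Accordingly, I would reduce the theorem to two assertions: first, that $Q$ is stationary under the translations $T_t$; and second, that the Palm distribution associated with $Q$ coincides with $\Pp^\mu_0$. Granting these, uniqueness forces $Q=\Pp^\mu$, and equivalently the inversion formula (\ref{spaghetti}) holds for $Q$.

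Stationarity is the soft part. The points $\mathfrak{m}_i$ of $h$--extrema and the translated slopes $\g_i=\bigl(B_{\cdot+\mathfrak{m}_i}-B_{\mathfrak{m}_i}\bigr)$ are measurable functionals of the \emph{increments} of $B$ alone, since the notions of $h$--minimum, $h$--maximum and of the height of a slope are invariant both under adding a constant to $B$ and under time translation. Because $B$ has stationary independent increments, the shifted path $\bigl(B_{\cdot+t}-B_t\bigr)$ again has law $\PP^\mu_0$, and its $h$--extrema and slopes are precisely the $T_t$--image of those of $B$; hence $T_tQ=Q$ for every $t\in\RR$.

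The core of the argument is the identification of the Palm distribution of $Q$ with $\Pp^\mu_0$, that is, the statement that \emph{seen from a typical $h$--extremum} the successive slopes are independent, alternate between the laws $P^\mu_+$ and $P^\mu_-$, and begin with an upward, respectively downward, slope with probability $1/2$ each. Here Lemma \ref{davide} supplies the building block: starting from an $h$--minimum, the portion of trajectory that climbs to height $h$ above the minimum is exactly the piece $\bigl(B_{\s+t}-\b\,:\,0\le t\le\t-\s\bigr)$, which the lemma shows is independent of the preceding trajectory and has the prescribed law. By the strong Markov property the process then restarts afresh at height $h$, and the location of the next $h$--maximum is governed by the reflected version of Lemma \ref{davide} (obtained through $B\mapsto-B$, $\mu\mapsto-\mu$); concatenating these two independent pieces reproduces precisely the two--independent--Brownian--motions construction of $P^\mu_+$, and the symmetric assembly yields $P^\mu_-$. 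Iterating the regeneration at each successive $h$--extremum gives both the independence and the alternation of the slope laws, while the $1/2$--$1/2$ split reflects that, mins and maxes alternating along the line, a typical $h$--extremum is of either type with equal long--run frequency.

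The main obstacle I anticipate is the rigorous regeneration bookkeeping that turns these local building blocks into a statement about the whole two--sided, bi--infinite sequence of slopes, and in particular the correct treatment of the origin--covering slope $\g_0$. One must express each $h$--extremum as an explicit stopping time, verify that the location of the minimum on $[0,\t]$ is genuinely the opening $h$--minimum of the relevant slope, and propagate the independence of Lemma \ref{davide} across all indices simultaneously. Crucially, the slope $\g_0$ that covers the origin is \emph{length--biased} relative to a generic slope, which is precisely the effect encoded by the factor $\int_0^{x_1}(\cdots)\,dt$ in (\ref{spaghetti}); matching this passage from the Palm measure $\Pp^\mu_0$ to the stationary measure $\Pp^\mu$ --- equivalently, checking that the structure regenerates correctly at the first $h$--extremum to the right of the origin and that the origin falls uniformly inside $\g_0$ --- is the delicate technical point where the Neveu--Pitman scheme of \cite{NP} must be adapted to $\mu\ne0$.
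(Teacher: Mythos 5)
Your reduction --- stationarity of the law $Q$ of (\ref{violino}), identification of its Palm distribution with $\Pp^\mu_0$, then uniqueness from the Palm--Khinchin inversion formula --- is a legitimate architecture and is genuinely different from the paper's: the paper never verifies stationarity of $Q$ directly, but instead shows that the finite--dimensional distributions of $Q$ coincide with those of $\Pp^\mu$ by letting the regeneration construction start at a time $t_0$ and sending $t_0\downarrow -\infty$, using that $T_{t_0}\Pp^\mu_0$ converges weakly to $\Pp^\mu$ (Corollary \ref{balocco}, which rests on the ergodicity of $\Pp^\mu_0$, Lemma \ref{saltelli}). Your stationarity step is correct as argued (the $h$--extrema and translated slopes are functionals of the increments, and the increments are stationary). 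The gap is in the core step: identifying the Palm distribution of $Q$ with $\Pp^\mu_0$ is essentially equivalent to the theorem itself, and the mechanism you propose for it does not work as stated. Points of $h$--extremum are \emph{not} stopping times (whether $x$ is an $h$--extremum depends on the path after $x$), so ``iterating the regeneration at each successive $h$--extremum'' --- and in particular your plan to ``express each $h$--extremum as an explicit stopping time'' --- is not available. The paper regenerates instead at the stopping times $\t_n$ (first rise, resp.\ fall, by $h$ from the running minimum, resp.\ maximum) and extracts the independence of slopes from the independence statement of Lemma \ref{davide}; this yields that the slopes between $\s_n$ and $\s_{n+1}$ are independent with alternating laws $P^\mu_\mp$ only for $n\geq 1$, i.e.\ strictly to the right of the first regeneration point.

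The second, more serious, part of the gap: even granting that regenerative description, the two--sided slope sequence of $B$ seen from any \emph{fixed} location (the origin, or the starting point of the construction) is not the i.i.d.\ alternating sequence --- that is exactly the biasing phenomenon --- so the Palm distribution of $Q$ cannot be read off locally from Lemma \ref{davide} plus the strong Markov property. Some averaging device is needed to wash out this boundary effect. The paper's device is the limit $t_0\downarrow-\infty$ together with a coupling estimate, $\lim_{t_0\downarrow-\infty}\bigl|\EE g(t_0+X)-\EE g(t_0+V)/2 - g(t_0)/2\bigr|=0$, proved via the $L^1$ local central limit theorem (formulas (\ref{marzo4})--(\ref{centrale3})), which in turn requires Lemma \ref{uffina} (bounded densities and finite moments of $X$ and $V$). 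In your framework the analogue would be to compute the Palm distribution of $Q$ through ergodic averages over many points of the process, invoking a law of large numbers for the regenerative slope sequence; that route is viable, but it is precisely the argument your proposal leaves unsupplied. Note finally that the ``delicate point'' you single out --- the length--biasing of $\g_0$ --- is actually automatic in your approach: it is encoded in the inversion formula (\ref{spaghetti}) once the Palm identification is established. The genuine difficulty sits one step earlier, in establishing that identification at all.
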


In order to prove the above result we need to   further elucidate
the relation between $\Pp^\mu$ and its Palm distribution
$\Pp^\mu_0$, benefiting of the Palm--Khinchin theory. To this aim we
need the ergodicity of  $\Pp^\mu _0$:
\begin{lemma}\label{saltelli}
The probability measure $\Pp^\mu_0$ is $\theta$--ergodic, i.e. if
$A\subset \Nn $ is a measurable  set such that $\Pp ^\mu _0 (A\Delta
\theta A)=0$ then $\Pp^\mu _0(A)\in\{0,1\}$.
\end{lemma}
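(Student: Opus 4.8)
The plan is to reduce the $\theta$--dynamics to a shift on the bi--infinite sequence of marks, to establish $\theta^2$--ergodicity of the two components $\Pp^\mu_{0,+}$ and $\Pp^\mu_{0,-}$ separately, and finally to combine them by exploiting the fact that $\theta$ interchanges the two components.

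First I would record the action of $\theta$ on the Palm distribution. Under $\Pp^\mu_{0,\pm}$ one has $x_0=0$ and $x_{i+1}-x_i=\ell(\g_i)$, so by the bijection (\ref{pmassa}) a configuration in $\Nn_0$ is completely determined by its mark sequence $\{\g_i\}_{i\in\ZZ}\in\Ww^\ZZ$, and the restriction to $\Nn_0$ of the $\s$--algebra of $\Nn$ is carried onto the product $\s$--algebra on $\Ww^\ZZ$. Since $\theta\xi=\sum_i\d_{(x_i-x_1,\g_i)}$ places the point $x_1$ at the origin, $\theta$ maps $\Nn$ into $\Nn_0$ and, after relabelling (the new index $j$ corresponds to the old index $j+1$), it acts on marks as the one--step shift $\g_i\mapsto\g_{i+1}$. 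Because this shift reverses the parity of the indices, it sends the alternating law $\Pp^\mu_{0,+}$ to $\Pp^\mu_{0,-}$ and vice versa; hence
\[
\theta\Pp^\mu_{0,+}=\Pp^\mu_{0,-}\,,\qquad \theta\Pp^\mu_{0,-}=\Pp^\mu_{0,+}\,,
\]
and in particular $\theta\Pp^\mu_0=\Pp^\mu_0$, as already observed.

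Next I would prove that each $\Pp^\mu_{0,\pm}$ is $\theta^2$--ergodic. Under $\Pp^\mu_{0,+}$ the marks $\{\g_i\}$ are independent, the even ones with law $P^\mu_+$ and the odd ones with law $P^\mu_-$; grouping them into pairs $\eta_i:=(\g_{2i},\g_{2i+1})$ produces an i.i.d. sequence $\{\eta_i\}_{i\in\ZZ}$ with common law $P^\mu_+\otimes P^\mu_-$ on $\Ww\times\Ww$. Under the identification above, $\theta^2$ preserves $\Pp^\mu_{0,+}$ and corresponds exactly to the one--step shift of $\{\eta_i\}_{i\in\ZZ}$. The shift of a two--sided i.i.d. sequence is mixing, hence ergodic (Kolmogorov's zero--one law), so $\Pp^\mu_{0,+}$ is $\theta^2$--ergodic; the same argument gives the claim for $\Pp^\mu_{0,-}$.

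Finally I would combine the two facts. Let $A$ be measurable with $\Pp^\mu_0(A\Delta\theta A)=0$. Since $\theta\Pp^\mu_0=\Pp^\mu_0$, the triangle inequality gives $\Pp^\mu_0(A\Delta\theta^2 A)=0$. As $\Pp^\mu_0=\frac12\Pp^\mu_{0,+}+\frac12\Pp^\mu_{0,-}$ with nonnegative summands, both $\Pp^\mu_{0,+}$ and $\Pp^\mu_{0,-}$ are absolutely continuous with respect to $\Pp^\mu_0$, whence $\Pp^\mu_{0,\pm}(A\Delta\theta^2 A)=0$. By the $\theta^2$--ergodicity just proved, $\Pp^\mu_{0,+}(A)\in\{0,1\}$ and $\Pp^\mu_{0,-}(A)\in\{0,1\}$. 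Moreover $\Pp^\mu_{0,+}(A\Delta\theta A)=0$ as well, so $\Pp^\mu_{0,+}(A)=\Pp^\mu_{0,+}(\theta A)=(\theta\Pp^\mu_{0,+})(A)=\Pp^\mu_{0,-}(A)$; hence the two values coincide and $\Pp^\mu_0(A)=\frac12\bigl(\Pp^\mu_{0,+}(A)+\Pp^\mu_{0,-}(A)\bigr)\in\{0,1\}$, which is the assertion. The step I expect to require the most care is the verification that $\theta$, after relabelling, is precisely the one--step mark shift and that $\theta\Pp^\mu_{0,+}=\Pp^\mu_{0,-}$; once these identifications are in place, the rest is standard ergodic theory of i.i.d. shifts.
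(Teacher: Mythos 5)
Your proposal is correct, but it takes a genuinely different route from the paper's proof. The paper never isolates the Bernoulli--shift structure as a quotable ingredient: it runs the cylinder approximation by hand, choosing a set $A_\e$ depending only on the marks $\g_i$, $-k\le i\le k$, with $\Pp^\mu_0(A\Delta A_\e)\le \e$, then using $\theta$--invariance to get $\Pp^\mu_0(A_\e\Delta\theta^n A_\e)\le 2\e$ and exploiting independence of $A_\e$ and $\theta^n A_\e$ for large $n$ \emph{while tracking the parity of $n$}: writing $a=\Pp^\mu_{0,+}(A)$ and $b=\Pp^\mu_{0,-}(A)$, even $n$ yields $a+b-(a^2+b^2)=o(1)$ and odd $n$ yields $a+b-2ab=o(1)$, whence $(a-b)^2=o(1)$ and then $a=b\in\{0,1\}$. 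Your argument reaches the same two conclusions by a cleaner modular decomposition: the swap relation $\theta\Pp^\mu_{0,\pm}=\Pp^\mu_{0,\mp}$ forces $a=b$, while $\theta^2$--ergodicity of each component (pairing marks into an i.i.d.\ sequence on $\Ww\times\Ww$) forces $a,b\in\{0,1\}$; the final combination via absolute continuity of $\Pp^\mu_{0,\pm}$ with respect to $\Pp^\mu_0$ is exactly right. What your approach buys is brevity and structure, since the two roles of the hypothesis (parity swap versus genuine mixing) are separated; what the paper's buys is self-containedness, because the ergodicity of the two-sided i.i.d.\ shift, once one proves it, is precisely the cylinder-plus-independence computation the paper writes out -- the two proofs share the same engine, packaged differently. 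Two small points of care for your write-up: first, for a \emph{two-sided} shift, invariant events are not tail events, so Kolmogorov's zero--one law does not apply directly; justify ergodicity of the paired shift either by the standard mixing computation with cylinder sets or by the Hewitt--Savage zero--one law. Second, state the ergodicity you invoke in the ``mod null'' form, i.e.\ $\Pp^\mu_{0,\pm}(A\Delta\theta^2A)=0$ implies $\Pp^\mu_{0,\pm}(A)\in\{0,1\}$, which is what your combination step uses; mixing gives this directly, since invariance mod null yields $\Pp^\mu_{0,\pm}(A)=\Pp^\mu_{0,\pm}(A\cap\theta^{2n}A)$ for every $n$ while the right-hand side converges to $\Pp^\mu_{0,\pm}(A)^2$.
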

\begin{proof}
Let us suppose that  $A\subset \Nn $ is a Borel set with $\Pp^\mu _0
(A\Delta \theta A)=0$.  Due to the characterization  of the
$\s$--algebra of measurable sets in $\Nn $ given by the bijective
and both ways measurable map (\ref{pmassa}) and since $\Pp ^\mu_0$
is concentrated on $\Nn _0$, for each $\e>0$ we can find a
measurable set $A_\e \subset \Nn_0 $  and an integer $k=k(\e)$ such
that $A_\e$ depends only on the random variables $ \g_i$ with
$-k\leq i \leq k$ and $\Pp^\mu _0 (A\D A_\e) \leq \e$. Since
$$
 \Pp^\mu _0(A\Delta A_\e)= \frac{1}{2} \Pp^\mu _{0,+}  ( A \D A_\e) +
\frac{1}{2} \Pp^\mu_{0,-}( A \D A_\e)
$$
we can conclude that
 \begin{equation}\label{vegetale1}
 \Pp^\mu_{0,+}  ( A \D A_\e) \leq 2\e, \qquad  \Pp^\mu_{0,-}( A \D
 A_\e)\leq 2 \e\,.
 \end{equation}
Since $\Pp^\mu_0$ is $\theta$--invariant and $\Pp _0 ^\mu  ( A \D
\theta A)=0$, we get for each positive integer $n$ that $ \Pp^\mu _0
(A\Delta \theta ^n A) =0$ and therefore
$$
 \Pp^\mu_0  (A\Delta \theta ^n A_\e)=
\Pp^\mu_0 (\theta ^n A\Delta \theta ^n A_\e) = \Pp^\mu _0 \bigl(
\theta ^n (A \D A_\e) \bigr)= \Pp^\mu _0 (A\D A_\e)\leq \e. $$ This
implies that
\begin{equation}\label{vegetale2}
\Pp^\mu _0 (A_\e \Delta \theta ^n A_\e )\leq \Pp^\mu _0 (A_\e \D A)+
\Pp ^\mu _0 (A \Delta \theta ^n A_\e )     \leq 2\e\,.
\end{equation}
%Due to the definition of $\Pp^\mu_0$ as convex combination of
%$\Pp^\mu_{0,+} $ and $\Pp0^\mu_{0,-}$ we can conclude that
%\begin{equation}
%\Pp^\mu_{0,+} ( A_\e \D \theta ^n A_\e)\leq 4 \e \,, \qquad
%\Pp^\mu_{0,-} ( A_\e \D \theta ^n A_\e)\leq 4 \e\,.
%\end{equation}
Let us now write  $o(1)$ for   any quantity which goes to $0$ as $\e
\downarrow 0$. We note that  for $n$ large enough and even it holds
\begin{align}
& \Pp^\mu_{0,+}   ( A_\e \cap \theta ^n A_\e)= \Pp^\mu_{0,+}
(A_\e)^2=\Pp ^\mu_{0,+} (A)^2+o(1)\,, \label{vegetale3}\\
& \Pp ^\mu_{0,-}  ( A_\e \cap \theta ^n A_\e)= \Pp ^\mu_{0,-}
(A_\e)^2= \Pp ^\mu_{0,-} (A)^2+o(1)\,;\label{vegetale4}
\end{align}
while  for $n$ large enough and odd it holds
\begin{equation}\label{vegetale5}
\Pp ^\mu_{0,+} ( A_\e \cap \theta ^n A_\e)=\Pp^\mu_{0,-} ( A_\e \cap
\theta ^n A_\e)=\Pp^\mu_{0,+} (A_\e)\Pp ^\mu_{0,-}(A_\e)= \Pp
^\mu_{0,+} (A)\Pp^\mu_{0,-} (A)+o(1)\,.
\end{equation}
Let $a:=\Pp ^\mu_{0,+}(A)$ and  $b := \Pp ^\mu_{0,-}(A)$.  Due to
(\ref{vegetale1}),...,(\ref{vegetale5}) we can conclude that
\begin{equation} \begin{split} &  o(1)=
\Pp ^\mu_0 (A_\e \Delta \theta ^n A_\e )= \Pp^\mu _0(A_\e)+
\Pp^\mu_0 (\theta ^n A_\e)- 2 \Pp^\mu _0 (A_\e \cap \theta ^n A_\e)=\\
& 2 \Pp ^\mu_0 (A_\e) - 2 \Pp _0^\mu(A_\e \cap \theta ^n A_\e ) =\\
 &  \Pp^\mu_{0,+}  (A_\e) + \Pp^\mu_{0,-} (A_\e) -\Bigl[
\Pp^\mu_{0,+} (A_\e \cap \theta ^n A_\e) +
 \Pp^\mu_{0,-}  (A_\e \cap \theta ^n A_\e)\bigr] =
  \\
& \begin{cases} a+b - (a^2+b^2) + o(1) & \text{ if $n$ is even and
large}\,,
\\
a+b - 2 ab+o(1) & \text{ if $n$ is odd and large}\,.
\end{cases}
\end{split}
\end{equation}
We conclude that
$$
 a+b - (a^2+b^2)  =o(1)\,,\qquad a+b - 2 ab=o(1)\,.
$$
hence, by subtraction, $(a-b)^2=o(1)$, i.e. $a=b+o(1)$.  It is
simple to check that there
 are only two possible cases: (i)
$a=o(1)$ and $b=o(1)$, (ii) $a=1+o(1)$ and $b=1+o(1)$. Since
$\Pp^\mu_0(A)=(a+b)/2$,  in the first case we get $\Pp^\mu _0 (A)=
o(1)$ while in the latter $\Pp^\mu _0(A)= 1+o(1)$. Due to the
arbitrary of $\e$ we conclude that $\Pp^\mu _0(A)\in \{0,1\}$.

\end{proof}
Let us now introduce the space  $\Nn_*$ given by the counting
measures $\xi= \sum_{j\in J } n_j \d_{ (x^{(j)}, \g^{(j)} )}$ on
$\RR\times \Ww$, where $n_j \in \NN$ and  the set $\{\bigl(x^{(j)},
\g^{(j)} \bigr)_{j\in J} \}$ has finite intersection with sets of
the form $[a,b]\times \Ww$. Note that if $n_j=1$ for all $j\in J$ we
can identity $\xi$ with its support. Hence we can think of $\Nn$ as
a subset of $\Nn_*$.

As discussed in \cite{FKAS}[Section 1.1.5] one defines on  $\Nn _*$
a suitable metric $d _{\Nn _*}$ such that i) $\Nn_*$ is a Polish
space, ii) $\Nn $ is a Borel subset of $\Nn _*$ and iii) the
$\sigma$--algebra of Borel subsets of $\Nn _*$ is generated by the
sets
$$
\left\{ \xi \in \Nn _*\,:\; \xi (A\times B)= j \right\}\,, \qquad
A\subset \RR  \text{ Borel}, \; B \subset \Ww \text{ Borel},\; j \in
\NN \,.
$$
In particular, the $\s$--algebra of measurable subsets of $\Nn $
introduced above coincides with the $\s$--algebra of Borel subsets
of $\Nn $ and  we can think of $\Pp^\mu, \Pp^\mu_0, \Pp^\mu_{0,\pm}$
as Borel probability measures on $\Nn_*$ concentrated on $\Nn $.

  Due to
Lemma \ref{saltelli} and \cite{FKAS}[Theorem 1.3.13] we get
\begin{coro}\label{balocco} Given  $\mu\not =0$,
the probability measure  $T_{t_0} \Pp_0^\mu $ weakly converges to
$\Pp^\mu  $ as $t_0\downarrow -\infty$, i.e. for any continuous
bounded function $f:\Nn_*\rightarrow \RR$ it holds
\begin{equation}\label{pomodoro100}
\lim _{t_0\downarrow -\infty} \EE_{ T_{t_0} \Pp^\mu _0} (f) =
\EE_{\Pp^\mu} (f)\,.
\end{equation}
\end{coro}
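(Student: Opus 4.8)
The plan is to read Corollary \ref{balocco} as a direct instance of the abstract weak--convergence statement relating a stationary marked point process to its Palm measure, namely \cite{FKAS}[Theorem 1.3.13], so that the whole argument reduces to verifying that the hypotheses of that theorem are met in the present concrete setting. Three ingredients are needed: that $\Pp^\mu$ has finite and strictly positive intensity, that $\Pp^\mu_0$ is genuinely its Palm distribution, and that $\Pp^\mu_0$ is $\theta$--ergodic.

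First I would record the intensity. By (\ref{mangobis}) and the computation displayed just before (\ref{spaghetti}) one has $\EE_{\Pp^\mu_0}(x_1)=\sinh^2(\mu h)/\mu^2$, which is finite and strictly positive for every $\mu\neq 0$ and $h>0$; equivalently the intensity $\EE(\ell)/2$ of $\Pp^\mu$ is finite and positive. This is exactly what makes the inversion formula (\ref{spaghetti}) the genuine Palm--Khinchin relation between $\Pp^\mu$ and $\Pp^\mu_0$, so that $\Pp^\mu_0$ is indeed the Palm distribution of $\Pp^\mu$ and the correspondence results of \cite{FKAS}[Section 1.3] become available.

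Second, I would feed in Lemma \ref{saltelli}, which supplies precisely the $\theta$--ergodicity of $\Pp^\mu_0$ that is the substantive hypothesis. The reason the convergence then holds is transparent from (\ref{spaghetti}): testing against a continuous bounded $f:\Nn_*\to\RR$, the right--hand side expresses $\EE_{\Pp^\mu}(f)$ as the normalized $\Pp^\mu_0$--average of $\int_0^{x_1} f(T_{-t}\xi)\,dt$, whereas $\EE_{T_{t_0}\Pp^\mu_0}(f)$ is, by $T_{t_0}\Pp^\mu_0=(T_{-t_0})_\ast\Pp^\mu_0$, just a single spatial shift of $f$ averaged over $\Pp^\mu_0$. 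The ergodic theorem for the $\theta$--invariant, $\theta$--ergodic measure $\Pp^\mu_0$ forces the Birkhoff averages of $f$ along the shifted configurations to converge to the spatial mean $\EE_{\Pp^\mu}(f)$, which is the content of \cite{FKAS}[Theorem 1.3.13]. Carrying this out on the Polish space $\Nn_*$, on which $\Pp^\mu,\Pp^\mu_0,\Pp^\mu_{0,\pm}$ are Borel measures concentrated on $\Nn$, yields exactly the limit (\ref{pomodoro100}).

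The only genuine obstacle in this program has already been overcome in Lemma \ref{saltelli}: establishing $\theta$--ergodicity of $\Pp^\mu_0$, which exploited the independence structure of $\Pp^\mu_{0,\pm}$ and the alternation of the mark laws $P^\mu_\pm$. Once ergodicity is secured, the remaining steps are bookkeeping: matching the sign convention for $T_{t_0}$ (recalling that the translation $T_t$ used here coincides with $S_{-t}$ of \cite{DVJ} and with $T_{-t}$ of \cite{FKAS}, so that $t_0\downarrow-\infty$ corresponds to the shift direction in the cited theorem), confirming finiteness of the intensity, and identifying the topology on $\Nn_*$ as the one for which continuous bounded functions characterize weak convergence. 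None of these presents any real difficulty, so the corollary follows at once.
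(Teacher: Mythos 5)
Your proposal is correct and follows essentially the same route as the paper: the paper's entire proof is to combine the $\theta$--ergodicity of $\Pp^\mu_0$ from Lemma \ref{saltelli} with \cite{FKAS}[Theorem 1.3.13], exactly as you do. Your additional verifications (finite positive intensity, the Palm relation (\ref{spaghetti}), the sign convention for $T_t$) are sound bookkeeping that the paper leaves implicit.
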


Let $\nu$ be   the intensity measure   associated to $ \Pp ^\mu$,
i.e. $\nu$ is  the probability measure on $\RR \times \Ww$ such that
$$ \nu (A\times B) = \EE_{\Pp ^\mu } \bigl( \xi (A\times B) \bigr),
\qquad A\subset \RR \text{ Borel}, \; B \subset \Ww \text{
Borel}\,.$$ Then due to  \cite{FKAS}[Theorem 1.1.16], the weak
convergence of $ T_{t_0} \Pp _0 ^\mu$ to $\Pp ^\mu$ stated in
Corollary \ref{balocco} is equivalent to the following fact: given a
finite family  $X_1, X_2, \dots , X_k$ of disjoint sets
$$ X_j = [a_j,b_j) \times L_j\,, \qquad a_i,b_j \in \RR\,, \qquad
L_j \subset \Ww \text{ Borel}\,,
$$
satisfying $$\nu (\partial X_j ) =0\,, \qquad j=1,\dots, k \,,
$$
where $\partial X $ denotes the boundary of $X$, it holds
% the corresponding finite dimensional distributions of $T_{t_0} P^\mu_0$
\begin{multline*}
\lim_{t_0 \downarrow -\infty} T_{t_0} \Pp^\mu_0 \left( \xi
(X_1)=j_1, \xi (X_2)=j_2, \dots , \xi (X_k) = j_k \right) = \\
\Pp ^\mu \left( \xi (X_1)=j_1, \xi (X_2)=j_2, \dots , \xi (X_k) =
j_k \right)
\end{multline*} for all $j_1, j_2, \dots, j_k \in \NN$.

\smallskip

We have now the main tools in order to prove Theorem
\ref{napoleone}.

\smallskip

%%%%%%%%%%%%%%%%%%%%%%%%%% figure
 \begin{figure}[!ht]
    \begin{center}
       \psfrag{a}[l][l]{$\sigma_0$}
       \psfrag{b}[l][l]{$\tau_0$}
       \psfrag{c}[l][l]{$\sigma_1$}
       \psfrag{d}[l][l]{$\tau_1$}
       \psfrag{e}[l][l]{$\sigma_2$}
       \psfrag{f}[l][l]{$\tau_2$}
      \psfrag{g}[l][l]{$h$}
       \psfrag{z}[l][l]{$t_0$}
      \psfrag{m}[l][l]{$\b_1$}
      \psfrag{n}[l][l]{$\b_0$}
      \psfrag{p}[l][l]{$\b_2$}
       \includegraphics[width=12cm]{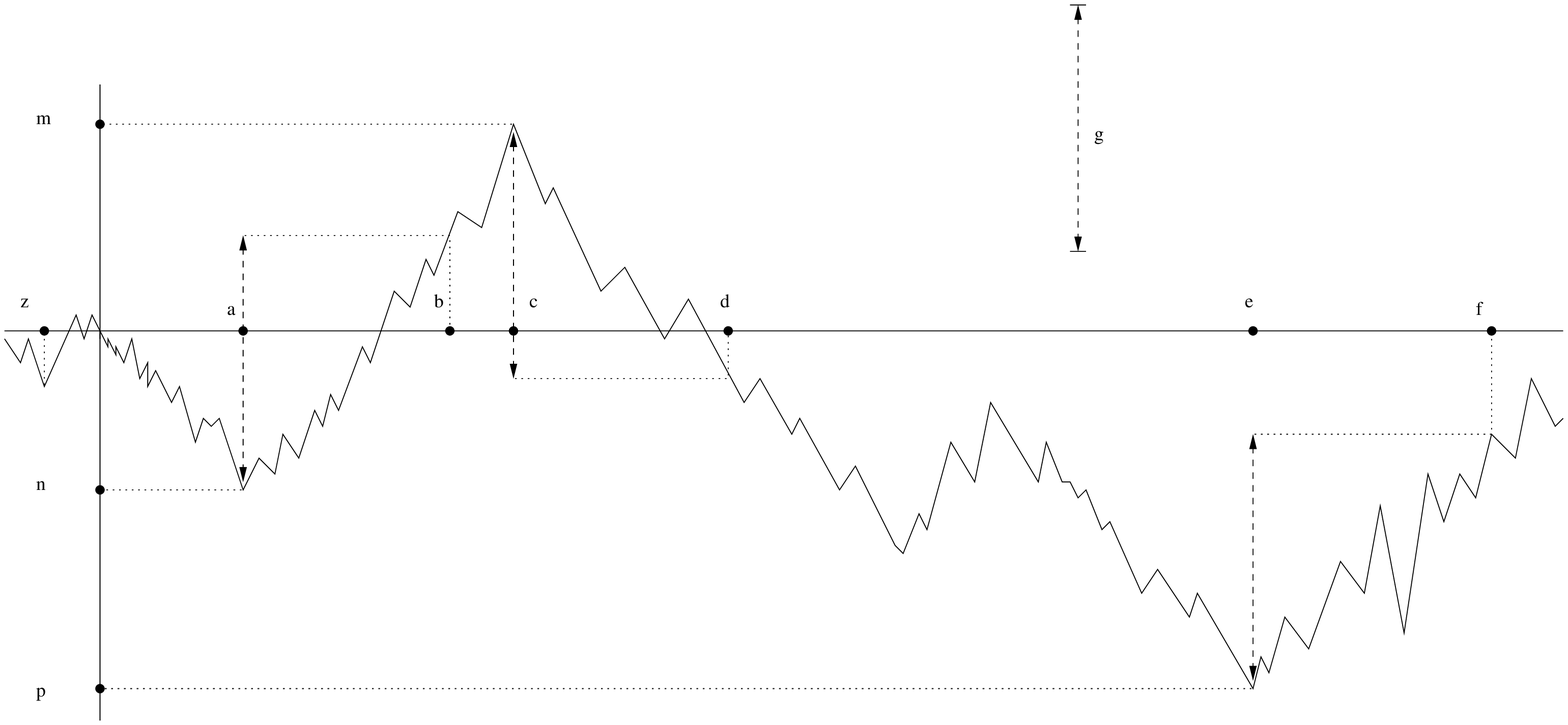}
      \caption{The sequence $\sigma_n$, $\tau_n$.}
    \label{tagliare}
    \end{center}
  \end{figure}

\noindent {\sl  Proof of Theorem \ref{napoleone}}. Let $B$ be a
two--sided Brownian motion with law $\PP^\mu_0$.  Set $\t_{-1}= t_0$
and define the random variables
$\t_n, \b_n, \s_n$ inductively on $n\in \NN$ as follows (see figure \ref{tagliare}):\\
For  $n$  even set
\begin{align}
& \t_n= \min\left \{ t \geq \t_{n-1} \,: \, B_t = \min_{\t_{n-1}
\leq
s \leq t } (B_s) +h \right\}\,,\\
& \b_n = \min \left\{ B_s \,:\, \t_{n-1}\leq s \leq \t_n \right\}\,,
\\
& \s_n = \max \left\{ s\,:\,\t_{n-1}\leq s \leq \t_n ,\,
B_s=\b_n\right\}\,;
\end{align}
for $n$ odd set
\begin{align}
& \t_n= \min\left \{ t \geq \t_{n-1} \,: \, B_t = \max_{\t_{n-1}
\leq
s \leq t } (B_s) - h \right\}\,,\\
& \b_n = \max  \left\{ B_s \,:\, \t_{n-1}\leq s \leq \t_n
\right\}\,,
\\
& \s_n = \max \left\{ s\,:\,\t_{n-1}\leq s \leq \t_n ,\,
B_s=\b_n\right\}\,.
\end{align}
Note that by construction $\s_n$ is a point of $h$--maximum for $n$
odd, while $\s_n$ is a point of $h$--minimum for $n\not =0 $ even.
Moreover, due to Lemma \ref{davide} and the strong Markov property
of Brownian motion at the Markov times $\t_n$, the slopes
$$
\bigl( B_{\s_n+s}-B_{\s_n} \,:\, 0 \leq s \leq \s_{n+1}-\s_n\bigr)
\qquad n \geq 1
$$
are independent, having  law $P^\mu_-$ if $n$ is odd and  law
$P^\mu_+$ if $n$ is even.

In what follows we will use the independent  random variables $X,V$
with the following laws:
  $X$ is   distributed as $\s_1-t_0$, i.e. $X$ is
distributed as $\bar \t + \bar \s ' $ where $\bar \t , \bar \s'$ are
independent copies of $\t,\s'$ defined in (\ref{esameinf}),
(\ref{esameinfbis}) respectively. $V$ is distributed as $\ell_+$,
i.e. as the length  $\ell(\g)$ of the random path $\g$  chosen with
law $P ^\mu _+$.

Given a realization  of the two--sided  Brownian motion $B$  with
law $\PP_0^\mu$, let  $\xi(B)$ be   the associated  marked simple
point process defined in (\ref{violino}), while let  $\xi$  denote a
generic element of $\Nn$. Fix
 a
finite family  $X_1, X_2, \dots , X_k$ of disjoint sets $ X_j =
[a_j,b_j) \times L_j$, with  $a_i,b_j \in \RR$ and $L_j\subset \Ww $
Borel,  and consider the event
$$
\Aa :=\left\{ \xi \,:\, \xi (X_1)=j_1, \; \xi (X_2)=j_2, \dots , \xi
(X_k)=j_k \right\}
$$
for given $j_1, j_2,  \dots, j_k \in \NN$. Finally, set $$a:=\min
\{a_1, a_2, \dots, a_k\}\,. $$
 Due to the discussion after Corollary
\ref{balocco}, we only need  to show that
\begin{equation}
\PP _0 ^{\mu} \bigl( \xi(B) \in \Aa \bigr)= \Pp^\mu (\Aa) \,.
\end{equation}
To this aim,  we set
$$
g(u) =  T_u \Pp ^\mu _{0,-} (\Aa
 )
$$
and restrict in what follows to the case  $t_0<a$. Then
 our initial
considerations  imply that
\begin{equation}
\PP_0^\mu \bigl( \xi(B)\in \Aa , \s_1 <a \bigr)= \EE \bigl(
g(t_0+X), t_0 +X< a \bigr)
\end{equation}
and therefore
\begin{multline}\label{marzo}
\left| \PP_0^\mu \bigl( \xi(B) \in \Aa \bigr) - \EE g( t_0 + X )
\right|=\\
 \left| \PP_0^\mu \left( \xi(B)\in \Aa , \; \s_1 \geq a\right)
- \EE\left( g( t_0 + X ),\; t_0 +X \geq a \right) \right|  \leq 2
P
( t_0 + X \geq a)\,.
\end{multline}
In what follows, we will frequently apply  the above  argument in
order to get estimates from above  without explicit mention.

\smallskip

%where $\EE_X$ denotes the expectation w.r.t. $X$.
Let us consider the probability measure $T_{t_0} \Pp ^\mu _0$. By
definition
\begin{equation*} %\label{marzo1}
T_{t_0}  \Pp ^\mu _0 ( \Aa ) = \frac{1}{2} T_{t_0} \Pp ^\mu _{0,+} (
\Aa ) + \frac{1}{2}T_{t_0} \Pp ^\mu _{0,-} ( \Aa )\,,
\end{equation*}
while
\begin{equation}\label{marzo2}
\left| T_{t_0} \Pp ^\mu _{0,+} (\Aa )- \EE g(t_0+V) \right|\leq 2 P(
t_0 + V\geq a )\,.
\end{equation}
Hence we can estimate
% (\ref{marzo1}) and (\ref{marzo2}) imply that
\begin{equation}\label{marzo3}
\left| T_{t_0}  \Pp ^\mu _0 (\Aa) -  \EE  g( t_0 + V )/2 -g (t_0) /2
\right| \leq 2  P( t_0 + V\geq a )\,.
\end{equation}
Due to (\ref{marzo}), (\ref{marzo3}) and Corollary \ref{balocco}, in
order to prove the theorem it is enough to show that
\begin{equation}\label{marzo4}
\lim _{t_0 \downarrow -\infty} \left|\EE g( t_0 + X ) -  \EE  g( t_0
+ V )/2 -g (t_0) /2\right| =0 \,.
\end{equation}
We will derive from the local central limit theorem that, given a
generic positive random variable $W$ having a (bounded) probability
density and bounded third moment, it holds
\begin{equation}\label{marzo5}
\lim _{t_0 \downarrow -\infty} \left|\EE g( t_0 + W)- g(t_0)
\right|=0\,.
\end{equation}
Due to Lemma \ref{uffina} below,  this result allows to derive
(\ref{marzo4}). In order to prove (\ref{marzo5}) define $S_n$ as the
sum of $n$ independent copies of the random variable $\ell $
introduced in Proposition \ref{calcoli}. Moreover, let $S_n$ and $W$
be independent. Then
\begin{align*}
& \left | \EE g (t_0+  W+S_n) - \EE  g (t_0 + W) \right|  \leq 2 P( t_0+ W+ S_n \geq a )\,,\\
&  \left | \EE g (t_0+S_n) - \EE  g (t_0) \right|  \leq 2  P( t_0+
S_n \geq a )\,.
\end{align*}
Hence in order to prove (\ref{marzo5}) it is enough to prove that
$$
\lim _{n\uparrow \infty}\,\, \limsup _{t_0 \downarrow -\infty}
 \left |
 \EE g (t_0+  W+S_n)-  \EE g (t_0+S_n)  \right| =0\,.
 $$
In general, given a r.w. $Z$ we write $p_Z$ for its probability
density (if it exists). Moreover, we denote by $\|\cdot \|_1$ the norm
in $L^1(\RR,du)$.   %Write $p_{W+S_n}, p _{S_n}$ for the
%probability density of $W+S_n$ and $S_n$ respectively and let
Setting  $\bar S_n = S_n - n \EE (\ell )$, we can bound
\begin{equation}\label{brahms1}
 \left |
 \EE g (t_0+  W+S_n)-  \EE g (t_0+S_n)  \right| \leq  \int _\RR  g (t_0 +
 u ) \bigl| p_{W+S_n} (u) - p_{S_n} (u) \bigr| du \leq {\| p_{W+\bar S_n} - p_{\bar S_n}
\|}_1 \,.
\end{equation}
 Since
\begin{align*}
&  p_{W+\bar S_n} (u)du = p_{(W+\bar S_n)/\sqrt{n} } (u/ \sqrt{n} )
d(u/\sqrt{n} )\,, \\
&  p_{\bar S_n} (u)du = p_{\bar S_n/\sqrt{n} } (u/ \sqrt{n} )
d(u/\sqrt{n} )\,,
\end{align*}
by a change of variables we can conclude that
\begin{equation}\label{brahms2}
\left |  \EE g (t_0+ W+S_n)-  \EE g (t_0+S_n)  \right| \leq  {
\|p_{(W+\bar S_n)/\sqrt{n} }- p_{\bar S_n}/\sqrt{n}  \|}_1\,.
\end{equation} Let $\Nn(u) = \exp\bigl ( - u^2 /(2\l )\bigr)/ \sqrt{ 2 \p \l }  $ be the
gaussian distribution with variance  $\l = Var (\ell)$. Due to
(\ref{brahms1}) and (\ref{brahms2}) in order to conclude we only
need to show that
\begin{align}
& \lim_{n\uparrow \infty}  \|p_{(W+\bar S_n)/\sqrt{n}
}-p_{W/\sqrt{n}}  * \Nn \|_1=0\,, \label{centrale1}\\
& \lim_{n\uparrow \infty} \| p_{W/\sqrt{n}}  * \Nn -\Nn    \|_1 =0\,, \label{centrale2}\\
& \lim_{n\uparrow \infty} \|\Nn - p_{\bar S_n/\sqrt{n} } \|_1=0\,,
\label{centrale3}
\end{align}
where $f*g$ denotes the convolution of $f$ and $ g$.

\smallskip

 We note that (\ref{centrale1}) follows from
(\ref{centrale3}) since $p_{(W+\bar S_n)/\sqrt{n} }= p _{W/\sqrt{n}}
* p_{\bar S_n /\sqrt{n}} $ and for generic functions $h, h', w$ in
$L^1 (\RR, du)$ it holds $\|h* w - h'
* w \|_1\leq \|h - h'\|_1 \|w\|_1$.
  The limit (\ref{centrale2}) follows from straightforward
computations while  (\ref{centrale3}) corresponds to   the
$L^1$--local central limit theorem for densities since $W$ has
bounded probability density and bounded third moment   (see
\cite{PR}[page 193] or Theorem 18 in   \cite{Pe}[Chapter VII] where
the boundedness of $p_W$ is required).

 \qed

\begin{lemma}\label{uffina}
The random variables $X$ and $V$ in the proof of Theorem
\ref{napoleone}  have bounded continuous probability densities.
Moreover, they have finite n--th moment for all $n \in \NN$.
\end{lemma}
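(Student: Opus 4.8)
The plan is to reduce the whole statement to the explicit Laplace transforms already recorded in Lemma \ref{davide}, together with the elementary symmetry $\mu\leftrightarrow-\mu$. Recall the two decompositions into \emph{independent} summands: $X$ is distributed as $\bar\t+\bar\s'$, while $V=\ell_+$ is distributed as $(\t-\s)+\s'$. In the first case $\bar\t,\bar\s'$ are independent copies by definition; in the second case $\t-\s$ is independent of $(\b,\s)$ by Lemma \ref{davide}, and $\s'$ is built from the Brownian motion $B'$ which is independent of $B$. Moreover, replacing $B'$ by $-B'$ turns the max--construction (\ref{esameinfbis}) into the min--construction (\ref{esameinf}) for a Brownian motion with drift $+\mu$ (this is the same symmetry underlying the remark that $P^\mu_-$ is the law of $-\g$ when $\g$ has law $P^{-\mu}_+$); hence the law of $\s'$ coincides with that of $\s$ computed with $\mu$ replaced by $-\mu$, so its Laplace transform is given by (\ref{lapla2}) with $\mu$ replaced by $-\mu$. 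Thus every summand entering $X$ and $V$ has an explicitly known Laplace transform.

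For the density part the main tool is Fourier inversion: a law whose characteristic function lies in $L^1(\RR)$ has a bounded continuous density, and if $Y_1$ has such a density and $Y_2$ is independent then so does $Y_1+Y_2$ (write $p_{Y_1+Y_2}(u)=\EE\, p_{Y_1}(u-Y_2)$, which is bounded by $\|p_{Y_1}\|_\infty$ and continuous by dominated convergence). It therefore suffices to exhibit one summand with an integrable characteristic function, and the natural candidate is $\t-\s$. Substituting $\a=-is$ in (\ref{lapla3}), so that $\hat\a=\mu^2/2-is$, I would use the asymptotics $\Re\sqrt{2\hat\a}\sim\sqrt{|s|}$ as $|s|\to\infty$: then $|\sinh(\sqrt{2\hat\a}\,h)|$ grows like $e^{h\sqrt{|s|}}$ while the numerator grows only like $\sqrt{|s|}$, so the characteristic function of $\t-\s$ is $O(\sqrt{|s|}\,e^{-h\sqrt{|s|}})$, which is integrable. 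Hence $\t-\s$ has a bounded continuous density. Since $V=(\t-\s)+\s'$ contains this summand directly, and $X=\bar\t+\bar\s'$ contains a copy of it as well (via $\t=\s+(\t-\s)$ with independent parts), both $X$ and $V$ inherit bounded continuous densities.

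For the moment part it is enough to show that each summand has a finite exponential moment $\EE\, e^{\e\,\cdot}<\infty$ for some $\e>0$, since then by independence $\EE\, e^{\e X}$ and $\EE\, e^{\e V}$ are finite and all polynomial moments follow. Finiteness of the exponential moment of a summand amounts to checking that its Laplace transform (\ref{lapla2})/(\ref{lapla3}) (and the $-\mu$ version for $\s'$) stays finite for some $\a<0$, which is governed by $\hat\a>0$ and the inequalities of the type (\ref{mom_finito}). At $\a=0$ one has $\hat\a=\mu^2/2>0$ and $\sqrt{2\hat\a}=|\mu|$, whence $\sqrt{2\hat\a}\coth(\sqrt{2\hat\a}h)=|\mu|\coth(|\mu|h)>|\mu|\ge\pm\mu$ because $\coth>1$; all the required inequalities thus hold strictly at $\a=0$, hence on a two--sided neighborhood of it. This yields $\e$--exponential moments for $\s$, $\t-\s$ and $\s'$, and therefore finite moments of all orders for $X$ and $V$.

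The only genuinely delicate point is the integrability of the characteristic function of $\t-\s$: it rests on the square--root growth $\Re\sqrt{2\hat\a}\sim\sqrt{|s|}$ along the vertical line $\hat\a=\mu^2/2-is$, which forces the stretched--exponential decay $e^{-h\sqrt{|s|}}$ and beats the polynomial prefactor $\sqrt{|s|}$. Everything else, namely the convolution/propagation argument for the density and the exponential--moment bookkeeping, is routine once the explicit transforms of Lemma \ref{davide} are in hand.
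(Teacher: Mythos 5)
Your proposal is correct and follows essentially the same route as the paper: the same independent decompositions $X=(\t-\s)+\s+\s'$ and $V=(\t-\s)+\s'$, the same key step (integrability of the Fourier transform of $\t-\s$, with the $\sqrt{|s|}\,e^{-h\sqrt{|s|}}$ decay coming from $\Re\sqrt{2\hat\a}\sim\sqrt{|s|}$), and the same moment argument via finiteness of the Laplace transforms of the summands in a neighborhood of the origin. The only step you gloss over is why (\ref{lapla3}), proved for real $\a$ with $\hat\a>0$, may be evaluated at imaginary $\a$: the paper justifies this by analyticity of the complex Laplace transform on the half-plane $\Re(\a)>-\mu^2/2$ (citing Doetsch) together with uniqueness of analytic continuation.
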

\begin{proof}
Due to Theorem 3 in \cite{F}[Section XV.3] in order to prove that
$X$ and $V$ have bounded continuous probability densities it is
enough that the associated Fourier transforms are in $L^1(\RR, dx)$.
Since $X= (\t-\s) + \s+ \s'$ and $\ell_+=(\t-\s)+\s'$ where the
random variables $\t-\s$, $\s$ and $\s'$ are independent, it is
enough to prove that the Fourier transform of $\t-\s$ is integrable.
To this aim we observe that due to Lemma \ref{davide}  the
expectation  $\EE_0^\mu \bigl(e^{-\a (\t-\s)}\bigr)$ is  finite for
$\a
> -\mu^2 /2$. This implies that the complex Laplace transform
$$\CC \ni \a \rightarrow \EE_0^\mu \bigl(e^{-\a (\t-\s)}\bigr)\in \CC
$$
is  well-defined (i.e. the integrand is integrable) and analytic on
the complex halfplane $\Re (\a)
> -\mu^2 /2 $. Indeed, integrability is stated in  Section
2.2 of \cite{D1} and  analyticity is stated in  Satz 1 (Proposition
1) in Section 3.2 of \cite{D1}. We point out that in \cite{D1} the
author considers  the  complex Laplace transform of functions, but
all arguments and results can be easily  extended to the complex
Laplace transform of probability measures.
\\
In particular, we get  that the   Fourier
transform  $\t-\s$ is given by
% , where $a \in \RR$,\\
%\begin{align*}
%&   \EE_0^\mu\left( \exp (- i a \s )\right)=      \frac{ \mu e^{-\mu h }}{
%  \sinh (\mu h )  \left(
%\sqrt{2 a i +\mu^2 } \coth \left(  h \sqrt{2 a i +\mu^2 } \right) -\mu \right) }\,,\\
 $$
 \EE_0^\mu \left( \exp \left( -i a (\t-\s)\right) \right)
=\frac{\sqrt{2a i +\mu^2 }}{\mu} \frac{  \sinh (\mu h ) }{
\sinh\left(\sqrt{2 a i +\mu ^2 } h \right) }\,,\qquad a\in \RR\,,
%\end{align*}
$$
where the square--root is defined by analytic extension  as $\sqrt{r
e^{i\theta} }= \sqrt{r} e^{i\theta /2} $ on the simply connected set
$\{r e^{i\theta} : r\geq 0, \theta \in (-\p, \p)\}$. From the above
expression, one easily derives  the integrability of the  above
Fourier transform.

\smallskip

Since the Laplace transforms (\ref{lapla1}) and  (\ref{lapla2}) are
analytic in the origin, it follows from \cite{F}[Section XIII.2]
that  $\s$, $\t-\s$ and $\s'$  have  finite n--th moments for all $n
\in \NN$. Hence, the same holds for $X$ and $V$.

\end{proof}

\section{Comparison with the RG--approach}\label{rengroup}

In this section we give some comments on   the results concerning
the $h$--extrema of drifted Brownian motion obtained in \cite{DFM}
via the non rigorous  Real Space Renormalization Group (RSRG) method
and applied for the analysis of 1d random walks in random
environments. We present the results obtained in \cite{DFM} in the
formalism of Sinai's random walk, keeping the discussion at a non
rigorous level.

Start with a sequence of i.i.d. random variables $\{\o_x\}_{x\in
\ZZ}$ such that $\o _x \in (0,1)$ and
$$
A:=\EE \left[ \log \frac{1-\o_0}{\o_0} \right] \in \RR
\setminus\{0\}\,,\qquad  \text{Var} \left[ \log \frac{1-\o_0}{\o_0}
\right] =:  2\s  \in (0,\infty).
$$
Defining  $ \d:= A/(2\s)$, the random variable  $\log \o_x
/(1-\o_x)$ corresponds then  to the random variable $f$ in (27) of
\cite{DFM}.
%$$
%\EE \left[ \log \frac{1-\o_0}{\o_0} \right]=2 \d  \,, \qquad
%\text{Var} \left[ \log \frac{1-\o_0}{\o_0} \right] = 2   <\infty \,.
%$$
Without loss of generality we set $\s=1$ as in \cite{DFM}, thus
implying that
 $A= 2 \d$.

 The associated Sinai's random walk is the nearest
neighbor random walk on $\ZZ$ where $\o_x$ is the probability to
jump from $x$ to $x+1$ and $1- \o_{x-1}$ is the probability to jump
from $x$ to $x-1$. Consider the function $V: \RR \rightarrow \RR $
defined on $\ZZ$ as
$$
V(x)=
\begin{cases}
\sum _{i=0} ^{x-1} \log \frac{1-\o_x}{\o_x}\,, &\text{ if } x\geq
1\, , x\in \ZZ\,, \\
0 & \text{ if } x=0\,, \\
- \sum _{i=x} ^{-1 } \log \frac{1-\o_x}{\o_x} & \text{ if } x <0 \,,
x\in \ZZ\,,
\end{cases}
$$
and extended to all $\RR$ by linear interpolation.  Morally, the
above Sinai's random walk is well described by a diffusion in the
potential $V$. In \cite{DFM} the authors obtain   results on the
statistics of the  $\G$--extrema of $V$ taking the limits $\G
\uparrow \infty $,  $\d \downarrow 0 $ with $\G \d$ fixed. In what
follows, we   show    the link between their results and our
analysis of the statistics of $h$--extrema of drifted Brownian
motion.

  By  the Central Limit Theorem applied to
$ V(x) -2 \d   x$,
 one  concludes that for $\G$ large
$$
 \frac{V (x \G^2)}{ \sqrt{2}  \G }\sim B^*_x + \sqrt{2}\d \G x \, ,
 \qquad x \in \RR\,,
 $$
where $B^*$ is the standard two--sided   Brownian motion (i.e. $B^*$
has law $\PP_0$). If we set \begin{equation}\label{neffa}
 \mu =-
\sqrt{2} \d \G   \end{equation} and consider the limits $\G \uparrow
\infty $ and $\d \downarrow 0 $ with $\mu$ fixed we get that the
rescaled potential $V$ is well approximated by the Brownian motion
$B$ with law $\PP_0 ^{\mu}$. In particular, for $\G$ large one
expects that the ordered family  $ \{( x_k, V(x_k) )\}_{k\in \ZZ}$
of $\G$--extrema of $V$ is  well approximated by family $\left\{
(\G^2 \mathfrak{m}_k, \sqrt{2} \G B_{\mathfrak{m}_k} )\right\}_{k\in
\ZZ}$ where $\{ \mathfrak{m}_k \} _{k\in \ZZ}$ is the ordered
sequence of points of $h$--extrema of $B$ with $h=1/\sqrt{2}$ (we
follow the convention that $x_0 \leq 0 < x_1$).
% Fix $k\not =0$ (not that in \cite{DFM} the authors erroneously do
% not distinguish the $\G$--slope covering the origin from the other
% $\G$--slopes) and define the random variables $\eta, \l$ as
Setting as in \cite{DFM}
%\begin{align*}
$$ \zeta:=   \bigl | V(x_{k+1} ) - V(x_k) \bigr|- \G    \,,\qquad    l:=x_{k+1}-x_k
 \,,
 $$
morally we get \begin{equation}\label{arrosticini}
 \zeta /\G \sim \sqrt{2} \bigl | B
_{\mathfrak{m}_{k+1} }- B_{\mathfrak{m}_k} \bigr| -1\,, \qquad
l/\G^2 \ \sim\mathfrak{m}_{k+1}-\mathfrak{m}_k\,.
\end{equation}
In \cite{DFM} the authors
  write $P^+( \zeta , l) d\zeta d l$, $P^-(\zeta , l) d\zeta d l$
for the joint probability density  of the random variables $\zeta,
l$ if $x_k$ is a point of  $\G$--minimum or  a point $\G$--maximum
respectively, they set  $ P_\G ^\pm  (\zeta, p) := \int_0 ^\infty
e^{-l p } P_{\G}(\zeta, l ) d l  $ and
  derive via the RSRG method the limiting form of $P_\G ^\pm  (\zeta,
  p)$
  (note that in \cite{DFM} the authors erroneously do
 not distinguish the $\G$--slope covering the origin from the other
 $\G$--slopes, but in order to have a correct result one must
 take $k\not =0$).

 It is simple to check that  all the computations obtained in
 \cite{DFM}[Section II.C.2] equal the results obtained in
 Proposition \ref{calcoli} by approximating the random variables
$\zeta, l$ with distribution
  $P^\pm ( \zeta, l ) d \zeta d l$ by means of  the random variables
$ \sqrt{2} \G \zeta_\pm, \G^2 \ell_\pm$ of  Proposition
\ref{calcoli}, where $\mu= - \sqrt{2} \d \G $, $h=1/\sqrt{2}$. This
confirms (\ref{arrosticini}).

\section{Proof of Lemma \ref{davide} via  fluctuation  theory}\label{pierpa}
Given a path $f\in C([0,\infty),\RR)$,   define the hitting time of
$f$  at $x$  as
\begin{equation}
T_x(f)=\inf \left\{t>0\,:\, f_t = x\right\},\qquad x\in \RR \, .
\end{equation}
Consider the process $\left\{ B_t,t\geq 0 \right\}$ carrying the law
$\PP ^\mu _0$ and define (see figure \ref{coniglio1})
\begin{equation}\label{salgo}
\begin{cases}
b_t = \min\left\{B_s\,:\, 0\leq s \leq t \right\}\\
L_t =-b_t,\\
Y_t= B_t -b_t.
\end{cases}
\end{equation}
The process $Y$ is the so called one--sided drifted BM reflected at
its last infimum. It has the following properties:
%%%%%%%%%%%% figure
\begin{figure}[!ht]
    \begin{center}
       \psfrag{a}[l][l]{$b_t$}
       \psfrag{b}[l][l]{$t$}
       \psfrag{c}[l][l]{$Y_t$}
      \psfrag{d}[l][l]{$h$}
       \includegraphics[width=7cm]{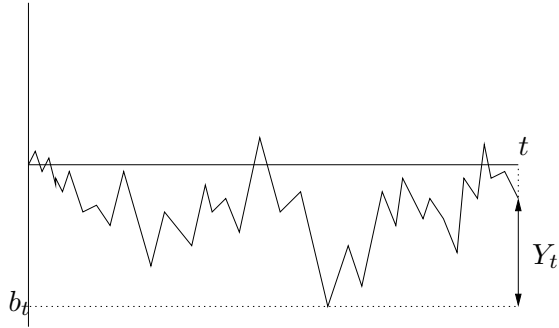}
      \caption{The process $Y_t$.}
    \label{coniglio1}
    \end{center}
  \end{figure}

%Trivially, $Y_t\geq 0$ for all $t\geq 0$. Moreover the following holds:
\begin{lemma} \label{japan}\cite{RW2}[Lemma VI.55.1]\\
The process $Y=\left\{Y_t,\,t\geq 0 \right\}$ is a diffusion and
$L=\left\{L_t\,,\,t\geq 0\right\}$
 is a  local time
of $Y$ at $0$. The  transition density function of the process  {\sl
$Y$ stopped at $0$}, i.e.  $\left\{ Y_{t\wedge T_0}\,,\,t\geq
0\right\} $, is
\begin{equation}\label{aquila}
\bar p _t   (x,y)= (2\p t ) ^{-1/2 } e^{\mu (x-y)-\frac{1}{2} \mu ^2
t }\left[
 e^{-(y-x)^2 /2t }- e^{-(y+x)^2 /2t }\right]\,,\qquad x,y > 0\,.
\end{equation}
The entrance law $n _t  $, $t>0$, associated with the  excursions of
$Y$ from $0$ w.r.t the time  $L$ is given by $n_t  (dy) = n_t
(y)dy$, where:
\begin{equation}\label{roma}
n_t  (y) = 2y (2\pi t^3 )^{-1/2} \exp \left[ -(y+\mu t)^2/2t
\right],\qquad  y > 0.
 \end{equation}
\end{lemma}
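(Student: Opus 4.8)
The plan is to read off all three assertions from the identification of $Y$ as a drifted Brownian motion reflected at its past minimum, combined with a Cameron--Martin (Girsanov) change of measure and the excursion theory of a regular diffusion. First I would settle that $Y$ is a diffusion with $L=-b$ a local time at $0$. Writing $Y_t=B_t+L_t$ with $L_t=-b_t$ continuous and nondecreasing, the pair $(Y,L)$ solves the Skorokhod reflection problem for the path $B$ on $[0,\infty)$: indeed $Y\geq 0$, and $L$ increases only on $\{t:Y_t=0\}=\{t:B_t=b_t\}$. Since $B$ has continuous paths (a continuous, spectrally one--sided L\'evy process), the reflected process $B-b$ is strong Markov and Feller, hence a regular diffusion on $[0,\infty)$ with generator $\tfrac12\partial_y^2-\mu\partial_y$ and instantaneously reflecting boundary at $0$; see \cite{B}[Chapter VII]. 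The specific choice $L=-b$ fixes the normalization of the local time, and this is exactly what will determine the constants in the entrance law.

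Next I would compute the killed kernel $\bar p_t$. While $Y>0$ the running minimum $b$ is locally constant, so $dY_t=dB_t$ and the reflection is inactive; hence the sub--Markov semigroup of $Y$ killed at $0$ coincides with that of the drifted Brownian motion started from $x>0$ and killed at its first hitting time $T_0$ of $0$. Changing measure from $\PP^\mu_x$ to the driftless law $\PP_x$ via $\frac{d\PP^\mu_x}{d\PP_x}\big|_{\mathcal{F}_t}=\exp\!\big(-\mu(B_t-x)-\tfrac12\mu^2 t\big)$, on the event $\{B_t=y,\;T_0>t\}$ this weight equals $e^{\mu(x-y)-\mu^2 t/2}$, while the driftless killed density is furnished by the reflection principle as $(2\pi t)^{-1/2}\big[e^{-(y-x)^2/2t}-e^{-(y+x)^2/2t}\big]$. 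Multiplying the two factors yields (\ref{aquila}).

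Finally, for the entrance law I would invoke the standard relation between the It\^o excursion measure of a reflecting diffusion and the inward derivative of its killed transition kernel at the boundary: with the local time normalized by $L=-b$, one has $n_t(y)=\partial_x\bar p_t(x,y)\big|_{x=0}$ (cf.\ \cite{RW2}). Since $\bar p_t(0,y)=0$, differentiating (\ref{aquila}) in $x$ at $x=0$ annihilates the prefactor contribution and leaves $(2\pi t)^{-1/2}e^{-\mu y-\mu^2 t/2}\,\tfrac{2y}{t}\,e^{-y^2/2t}$; completing the square via $\mu y+\tfrac12\mu^2 t+\tfrac{y^2}{2t}=\tfrac{(y+\mu t)^2}{2t}$ gives precisely $2y(2\pi t^3)^{-1/2}e^{-(y+\mu t)^2/2t}$, i.e.\ (\ref{roma}). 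I would corroborate the result by checking the consistency (Chapman--Kolmogorov) identity $n_{s+t}(y)=\int_0^\infty n_s(x)\,\bar p_t(x,y)\,dx$, which together with the normalization characterizes the entrance law.

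The main obstacle is pinning down the multiplicative constant in the entrance law --- the factor $2$ in (\ref{roma}) --- because both the excursion measure and its entrance law scale reciprocally under a rescaling of the local time $L$. I expect to resolve this not by quoting a generic constant, but by anchoring it in the driftless case $\mu=0$, where L\'evy's theorem identifies $-b$ with the semimartingale local time of $|B|$ at $0$ and the entrance law $2y(2\pi t^3)^{-1/2}e^{-y^2/2t}$ is classical, and then by confirming the consistency relation above; this simultaneously validates the inward--derivative formula for the chosen normalization $L=-b$ and transfers it to the drifted case.
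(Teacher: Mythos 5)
Two remarks before the substantive point. First, the paper does not prove Lemma \ref{japan} at all: it is quoted from \cite{RW2}[Lemma VI.55.1], so any blind proof is necessarily a "different route." Second, most of what you propose is correct and checkable: the Skorokhod-problem identification of $(Y,L)$, the derivation of (\ref{aquila}) by Girsanov times the reflection-principle kernel (this is exactly the verification the paper itself carries out later, in (\ref{islandina})), and the algebra showing that $\partial_x \bar p_t(x,y)\big|_{x=0}$ equals the right-hand side of (\ref{roma}) are all sound.

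The genuine gap is in how you pin the multiplicative constant of the entrance law when $\mu\neq 0$, which you yourself identify as the main obstacle but then do not actually close. Your two anchors cannot do it. (a) The Chapman--Kolmogorov check is blind to the constant: if $(n_t)$ satisfies $n_{s+t}(y)=\int_0^\infty n_s(x)\bar p_t(x,y)\,dx$, then so does $(c\,n_t)$ for every $c>0$; worse, your candidate $\partial_x\bar p_t(x,\cdot)\big|_{x=0}$ satisfies this identity automatically, simply by differentiating the Chapman--Kolmogorov equation for $\bar p$ in $x$ at $x=0$, so the verification carries no information about the normalization relative to $L=-b$. (b) L\'{e}vy's theorem fixes the constant only at $\mu=0$; the claim that this ``transfers'' to $\mu\neq 0$ is precisely what must be proved, since the inward-derivative formula holds with constant $1$ only for one specific normalization of local time, and one must show that the Skorokhod pushing process $L=-b$ is that normalization for \emph{every} drift, not just for the driftless case. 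Two ways to close the gap inside your framework: (i) Girsanov at the level of excursions: $L=-b$ and the excursion decomposition are the same path functionals of $B$ under $\PP_0$ and $\PP_0^\mu$, so on $\Ff_t\cap\{t<H\}$ the measure $n^\mu$ has density $e^{-\mu f_t-\mu^2 t/2}$ with respect to $n^{0}$; applied to $\{f_t\in dy,\,H>t\}$, this converts the classical driftless entrance law into (\ref{roma}) with the constant included. (ii) Alternatively, verify one normalization-sensitive identity at $\mu\neq0$: for instance that your candidate $n_t$ reproduces $\int_U\bigl(1-e^{-\lambda H(f)}\bigr)\,n(df)=\sqrt{2\lambda+\mu^2}-\mu$, the Laplace exponent of the inverse local time $\ell\mapsto T_{-\ell}(B)$ (a killed subordinator, cf. (\ref{leone5})), or that it reproduces $n\bigl(U^{h,+}\bigr)=\lim_{\varepsilon\downarrow 0}\varepsilon^{-1}\,\PP_0^\mu\bigl(T_h<T_{-\varepsilon}\bigr)=\mu e^{-\mu h}/\sinh(\mu h)$, i.e. (\ref{kuka1}), via an excursion-counting argument along the descent of the minimum. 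Either computation pins the constant and completes your proof.
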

Let us comment the above result and fix some notation
 (for a general treatment of excursion theory see for example \cite{RW2}[Chapter VI]
  and \cite{B}[Chapter IV]).

We denote by  $U$ the space of excursions from $0$, i.e. continuous
functions $f : [0,\infty)\rightarrow \RR$ satisfying the coffin
condition
$$ f(t) = f(H) = 0,\qquad \forall t\geq H,$$
where $H$ is the life time of $f$, namely
\begin{equation}
 H= H(f) = T_0(f)  \in [0,\infty].
 \end{equation}
The excursion space $U$ is endowed of the smallest $\s$--algebra
which makes each evaluation map $f\rightarrow f(t)$ measurable. One
can prove that this $\s$--algebra coincides with the Borel
$\s$--algebra  of the space $U$ endowed of the Shohorod metric.

Write $\g_t$ for the right continuous inverse of $L_t$, namely
$$ \g_t =\inf\left \{ u>0\,:\, L_u >t \right\} = \inf \{ u >0: \min _{0\leq s \leq u} B_s < -t \}  \,,$$
and  define the excursion $e_t \in U$, $t>0$, as
$$ e_t (s) = \begin{cases}
Y(\g_{t-} +s )& \text{ if  } \,0\leq s\leq \g_t-\g_{t-},\\
0 &\text{ if } \, s\geq   \g_t-\g_{t-}.
\end{cases}
$$
Then
 the random  point process $\nu$ of excursions  of $Y$ from $0$  is defined
 as
 $$\nu =\left\{ (t, e_t): \,t>0  ,\, \g_t \not = \g_{t-} \right\}\,.$$
In what follows, we will often identify the random discrete set $\nu
$  with the
 random measure $\sum _{t>0\,:\, \g_t \not =\g_{t-} } \d _{(t, e_t) }$ on $(0,\infty) \times U$.

 Decomposing $U$ as
 $U=U_\infty\cup U_0$, where
$$
 U_\infty=\{f\in U\,:\, H(f)=\infty\},\qquad U_0 =\{f\in U\,:\,
 H(f)<\infty\}\,,
$$
  It\^o Theorem \cite{RW2}[ Theorem VI.47.6] states that there exists a  $\s$--finite measure $n$ on $U$ (called
It\^o measure) with $n(U_\infty)<\infty$ such  that,
 if $\nu'$ is a Poisson point process on $(0,\infty)\times U$ with intensity measure $dt\times n$
  and if
\begin{equation}\label{zitta}
\zeta=\inf\left\{t>0\,:\, \nu'\left((0,t]\times U_\infty
\right)>0\right\},
\end{equation}
then the point process $\nu$ under   $\PP_0^\mu$ has the same law of
$\nu'\large{|} _{(0,\zeta]\times U}$:
\begin{equation}\label{pizza}
\nu \sim
 \nu'\large{|} _{(0,\zeta]\times U}.
\end{equation}
Here and in what follows,
% $X_1\sim X_2$  means the the random variables $X_1$ and $X_2$ have the same law,  while
given a measurable space $X$  with measure $m$ and a measurable
subset $A\subset X$ we denote $m\large{|}_ A$ the measure on $X$
such that
$$
 m\large{|} _A (B) =m(A\cap B),\qquad \forall B\subset X\text{ measurable}.
 $$
% while  $\nu'\big{|} _{(0,\zeta]\times U}$ denotes the point process

Given $t>0$ the entrance law $n_t(dy)$, with support in
$(0,\infty)$, is defined as \begin{equation}\label{saratoga}
 n_t
(dy) = n \left(\left\{ f\,:\, H(f)>t,\; f_t\in dy\right\} \right).
\end{equation}
Since the  process $Y$ (starting at $0$) defined via (\ref{salgo})
is Markov and
 visits each $y\geq 0$ a.s., the  definition of  the process $Y$
 starting at $y$ is obvious.
   Due to Lemma \ref{japan}, the process $Y$ starting at $y$ and stopped  at $0$ is a
    strong  Markov process with transition probability $\bar p_t (\cdot,\cdot)$.
    In what follows we denote its law by  $Q_y$.

Then, given $t>0$,  measurable subsets  $\Aa, \Cc\subset U$ with
$\Aa\in \s( f_s\,,\,0\leq s\leq t)$, it holds
\begin{equation}\label{rino1}
n\left(
 f\,:\, f\in \Aa    ,\, H(f)>t,\, \theta _t f  \in \Cc  \right) =\int _0 ^\infty
n\left( f\in \Aa,\, H(f)>t\,,f_t\in dy \right)    Q  _y (\Cc ),
\end{equation}
where $(\theta _t f )_s=f_{t+s}$.
% and $ Q_y$ is the law of the
%process $Y$ starting at $y$ and stopped at $0$.
% namely the process $\left\{Y_{t\wedge
%T_0}\,,\, t\geq 0 \right\}$ starting in $y$ where  $T_0$ is the
%hitting time of zero. The taboo transition density function of the
%process $Y$ killed at $0$ is the transition density function of the
%above process.
 In particular,  due to (\ref{rino1})  the
transition density functions (\ref{aquila}) and the entrance laws
(\ref{roma})  determine univocally the It\^o measure $n$.

%Now the content of Lemma \ref{japan} should be clear to the reader.

\bigskip

In order to get more information on the It\^o measure $n$ of the
point process of excursions of $Y$ from $0$,
 we give an
alternative probabilistic interpretation of the  transition density
$\bar p  (x,y)$.  To this aim  recall that
  Girsanov
formula implies that
\begin{equation}\label{aperol}
\EE ^\mu _x( g) =\EE _x (g Z_t ),\qquad Z_t = \exp\left\{ -\mu
(B_t-x)  -\mu^2 t/2 \right\},
\end{equation}
for each  $\Ff _t$--measurable function $g$, where $\Ff_t = \s
\left( B_s\,:\,0\leq s\leq t\right)$. Due to (\ref{aperol}), we get
for all $x,y,z,s,t>0$,
\begin{multline}\label{islandina}
 \PP _z ^\mu \left( B_{s+t}\in dy, \, T_0 ( \theta _s B) >t \,|\, B_s = x \right)=
\PP ^\mu _x \left( B_t \in dy , \,T_0 >t \right) =\\
e^{-\mu (y-x)  -\mu ^2 t/2 } \PP_x  \left ( B_t\in  dy\,\,
T_0>t\right)= \bar p_t ^\mu (x,y).
\end{multline}
In fact, the second identity follows from (\ref{aperol}) while the
last identity follows from (\ref{aquila})
 by computing
 $\PP_x \left ( B_t= y,\, T_0>t\right)$ via a reflection argument.
Hence,  given  $z>0$,  $\bar p_t (\cdot,\cdot)$  is the transition
density function of the process
 $\left( B_{t\wedge T_0 },\,t\geq 0 \right) $ under
$\PP ^\mu _z$,   whose law equals  $Q_z$.
 In particular,
(\ref{rino1}) can be reformulated as
\begin{equation}\label{rino2}
n\left(
 f\,:\, f\in \Aa    ,\, H(f)>t,\, \theta _t f  \in \Cc  \right) =\int _0 ^\infty
n\left( f\in \Aa,\, H(f)>t\,,f_t\in dy \right)
  \PP_y ^\mu  ( B_{\cdot\wedge T_0 }\in \Cc )\, .
\end{equation}
The above identity will be frequently used in what follows.

\bigskip
We point out that,  as stated in Theorem 1 of \cite{B}[Section
VII.1],  the content of Lemma \ref{japan} is valid in more
generality for spectrally positive L\'{e}vy processes s.t. the
origin is a regular point, i.e. real valued processes starting at
the origin with stationary independent increments, with no negative
jumps and returning to the origin at arbitrarily    small times.
Moreover, defining
$$ \tilde b_t :=  0 \wedge \inf
\{B_s : 0 \leq s \leq t \}\,,$$ it is simple to check that the
process $Y$ starting  at $x>0$ has the same law of the process
$\bigl(\tilde Y_t:= B_t - \tilde b _t\,,\, t\geq 0 \bigr)$, where
$B$ is chosen with law $\PP_x ^\mu$. This implies that  $\tilde Y_t
= B_t $ if $t< T_0(B )$, hence once gets again that $ \bar p ^\mu _t
(x,y) = \PP ^\mu_x (B_t \in dy, T_0 >t )$ as in (\ref{islandina}).

\bigskip

In order to state our results it is useful to fix  some further
notation. Given $h>0$ we denote by $U^{h,+}$ the family of
excursions with height at least $h$ and
 by $U^{h,-}$ the family of   excursions with
height less than $h$, namely
\begin{align}
&U^{ h,+ } =\left\{ f\in U\,:\, \sup _{s\geq 0} f_s \geq h \right\},\\
&U^{h,-}=\left\{ f\in U\,:\, \sup _{s\geq 0} f_s < h \right\}=
U\setminus U^{h,+}.
   \end{align}
  %  Due to It\^o theorem,   $\nu^* $  and $\nu_*$ have respectively the same law of  $\nu_1 \large{|}_{(0,\hat\zeta]\times U}$
 %  and $\nu_2 $, where $\nu_1$, $\nu_2$  are Poisson point processes on $U$ having
%respectively  intensity measure $dt\times  n^*$,$dt\times  n_*$,
%while $\hat \zeta$ is defined as in (\ref{zitta}) with $\nu'$replaced by $\nu _1 $.
%Recall that
%\begin{equation*}
%\hat \a =  \a +\mu^2 /2,\qquad \forall \a\in \RR\,.
%\end{equation*}
%Most of our results will hold with $\mu\not =0$ and will depend on
%the parameter $\a$ under the assumption that $\hat\a>0$. Trivially,
%this condition is fulfilled whenever $\a\geq 0$.

\bigskip

One of the  main technical tools in order to extend the proof in
\cite{NP} to the drifted case is the following lemma, whose proof is
postpone to Section  \ref{primavera}.
%R%%%%%%%%%%%%%%%%%%%%%%%%%%
\begin{lemma}\label{kuka}
If $\hat \a >0$, $\mu\not =0$,  then
\begin{align}
& n \bigl( U^{h,+}\bigr) =
\frac{ \mu e^{-\mu h }}{ \sinh (\mu h )} \label{kuka1}\, ,\\
%%%%%%%%%%%%%%%%
& n\bigl( U^{h,-}\cap U_\infty\bigr) =0 \label{kuka4}\,,\\
%%%%%%%%%%%%%%%%%%%%%%%%%%%
& \int _{U^{h,-}} \left( 1-e^{-\a H(f)}
 \right) n(df) = \sqrt{2\hat\a} \coth \left(\sqrt{2\hat \a} h\right) -\mu \coth(\mu h ) \,, \label{kuka2} \\
 %%%%%%%%%%%%%%%%%%%%%%%%%%%%%%%%%%
 &  n \bigl( U^{h,+}\bigr) ^{-1} \int_U  e^{-\a T_h }\II_{T_h<T_0}  n(df) =
\frac{\sqrt{2\hat \a}}{\mu} \frac{  \sinh (\mu h ) }{
\sinh\left(\sqrt{2\hat \a} h \right) } \label{kuka3}\,.
%%%%%%%%%%%%%%%%%%%%%%%%%%%%
\end{align}
\end{lemma}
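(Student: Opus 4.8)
The plan is to compute the four excursion-theoretic identities by exploiting the explicit form of the Itô measure $n$ determined in Lemma \ref{japan}, together with the reformulation (\ref{rino2}) that identifies the post-excursion dynamics with the drifted Brownian motion killed at $0$. Throughout I would lean on Girsanov's formula (\ref{aperol}), which relates the drifted quantities to the driftless ones and lets me import known hitting-time Laplace transforms for standard Brownian motion. The entrance law (\ref{roma}) and the stopped transition density (\ref{aquila}) are the two raw ingredients; everything else is a matter of integrating them against the right functionals.

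For (\ref{kuka1}), $n(U^{h,+})$ is the rate at which excursions reach height $h$. I would argue that an excursion lies in $U^{h,+}$ iff it hits level $h$ before returning to $0$; by the strong Markov/entrance-law structure this can be written as $\int_0^\infty n_t(dy)\,[\text{something}]$, but more efficiently I would use that $-\b$ in (\ref{esameinf}) is exactly the depth of the minimum reached before the reflected process first attains height $h$, and the first excursion reaching height $h$ occurs at an exponential local-time level with rate $n(U^{h,+})$. Since $L_t=-b_t$ and $\b=-L_\t$, the variable $-\b$ is exponential with parameter $n(U^{h,+})$; comparing with the mean computed in (\ref{lapla}) of Lemma \ref{davide}, namely $\EE_0^\mu(-\b)=\sinh(\mu h)/(\mu e^{-\mu h})$, immediately gives $n(U^{h,+})=\mu e^{-\mu h}/\sinh(\mu h)$. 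For (\ref{kuka4}), an infinite-lifetime excursion of $Y$ corresponds to the reflected process never returning to its infimum, i.e. the drifted BM drifting to $+\infty$; when $\mu>0$ (drift $-\mu<0$) the process a.s. returns, so $U_\infty$-excursions are those that escape to $+\infty$, and these necessarily have height $\geq h$, forcing $n(U^{h,-}\cap U_\infty)=0$. (The sign bookkeeping in $\mu$ is where I would be most careful.)

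The two Laplace-transform identities (\ref{kuka2}) and (\ref{kuka3}) are the substantive ones, and I expect (\ref{kuka2}) to be the main obstacle. For (\ref{kuka3}), I would write the integral over $U$ of $e^{-\a T_h}\II_{T_h<T_0}$ as an entrance-law integral: conditioning on the height-$h$ crossing and using (\ref{rino2}) with the stopped density, $T_h$ is the time for the drifted process started just above $0$ to reach $h$ before $0$, whose Laplace transform is classical and yields $\sinh$-ratios after a Girsanov tilt; dividing by $n(U^{h,+})$ from (\ref{kuka1}) and comparing with (\ref{lapla3}) closes the computation, since $\t-\s$ is precisely the ascent time from the minimum to level $\b+h$. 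For (\ref{kuka2}), I would use that $\int_{U^{h,-}}(1-e^{-\a H})\,n(df)$ governs the Laplace exponent of the subordinator of ``short'' excursions, and the key link is the decomposition of $\s$ (the local time at which the final pre-minimum excursion is completed) via the identity $\EE_0^\mu(\exp(-\a\s)\mid \b=-x)$ in (\ref{lapla1}): the conditional Laplace transform of $\s$ is $\exp\{-x[\sqrt{2\hat\a}\coth(\sqrt{2\hat\a}h)-\mu\coth(\mu h)]\}$, and $\s$ is the passage time of the subordinator inverse to $L$ truncated to $U^{h,-}$. Matching the exponent $x\cdot[\,\cdot\,]$ with the compensator $\int_{U^{h,-}}(1-e^{-\a H})\,n(df)$ of the Poisson point process of short excursions — via the exponential formula $\EE[\exp(-\a\sum_{t\le x} H(e_t)\II_{e_t\in U^{h,-}})]=\exp\{-x\int_{U^{h,-}}(1-e^{-\a H})\,n(df)\}$ — yields (\ref{kuka2}) directly. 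The hard part is setting up this exponential-formula identification rigorously, keeping track of the conditioning on $\{e_t\in U^{h,-}\ \forall t<\text{crossing}\}$ and verifying that the local-time level at which the first $U^{h,+}$-excursion appears is the independent exponential clock making the computation self-consistent with (\ref{kuka1}).
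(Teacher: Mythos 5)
Your proposal is circular within the logical structure of this paper. The identities of Lemma \ref{davide} that you take as inputs --- the exponential law of $-\beta$ with mean (\ref{lapla}), the conditional Laplace transform (\ref{lapla1}), and the ascent-time transform (\ref{lapla3}) --- are \emph{themselves proved from} Lemma \ref{kuka}, by exactly the correspondences you describe: the paper identifies $-\beta=a^*$ with the first arrival of the Poisson process of excursions in $U^{h,+}$, whose rate is $n(U^{h,+})$ from (\ref{kuka1}); it obtains (\ref{lapla1}) from (\ref{kuka2}) via the exponential formula applied to $\sigma=\int_{(0,x)\times U}H(f')\,\nu_*(da',df')$; and it obtains (\ref{lapla3}) from (\ref{kuka3}) since $\tau-\sigma=T_h(f^*)$. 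Your derivations of (\ref{kuka1}), (\ref{kuka2}) and (\ref{kuka3}) simply run these three correspondences backwards, so the substantive content of the lemma is assumed rather than proved. The identities (\ref{lapla}), (\ref{lapla1}), (\ref{lapla3}) are not classical facts for $\mu\neq 0$ that you may quote for free; unless you supply an independent derivation of them (e.g.\ by the martingale/scale-function methods of the references cited in Remark \ref{piponebis}), nothing in your argument actually computes any of the three quantities.

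A non-circular proof must compute the four quantities directly from the characterization of the It\^o measure $n$ by its entrance law (\ref{roma}) and the stopped transition density (\ref{aquila}), which is what the paper does: approximate each event by its trace at a small time $t>0$ (e.g.\ $U^{h,+}=\cup_{t>0}U^{h,+}_t$), use the Markov property (\ref{rino2}) under $n$ to write, for instance,
\begin{equation*}
n\bigl(U_t^{h,+}\bigr)=\int_0^\infty n_t(dy)\,\PP_y^\mu\left(T_h<T_0\right),
\end{equation*}
insert the hitting-time Laplace transforms of the drifted Brownian motion killed at $0$ (obtained by a Girsanov tilt of the non-drifted formulas, as in Lemma \ref{corvo}), and evaluate the $t\downarrow 0$ limits of the resulting Gaussian integrals. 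You gesture at precisely this route twice --- ``this can be written as $\int_0^\infty n_t(dy)\,[\cdots]$'' for (\ref{kuka1}), and the ``entrance-law integral \dots Girsanov tilt'' for (\ref{kuka3}) --- but then abandon it in favor of the circular shortcut, and for (\ref{kuka2}) you offer only the circular route. The one part of your proposal that stands on its own is (\ref{kuka4}): the observation that an infinite excursion cannot remain below $h$ is the paper's argument as well, though to make it rigorous under the $\sigma$-finite measure $n$ (rather than for the excursions of the process) you still need (\ref{rino2}) together with the bound $\int_h^\infty n_t(dy)\to 0$.
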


Finally, we can prove Lemma \ref{davide}:

\smallskip

%\begin{proof}
\noindent {\em Proof of Lemma \ref{davide}}. One can recover the
Brownian motion from the point process $\nu$ of excursions of $Y$
from $0$ by the formula
\begin{equation}\label{iaco}
B_t= -a+f(t-S),\qquad \text{ for } t\in [S, S+H(f)],
\end{equation}
which is valid for each couple $(a,f)\in \nu $  by setting
$$ S=\int_{ (0,a )\times U}   H(f') \nu(da',df').
$$
 It is convenient to associate to   $U^{ h,+ }$, $ U^{ h,-}$ the
 measures   $ \nu^*=\nu \large{|}_{[0,\infty)\times U^{ h,+ } }$,
   $ \nu_*=\nu \large{|}_{[0,\infty)\times U^{h,- } }$,
 $ n^*=   n|_{U^{h,+ }}$ and
 $n_*=   n|_{U^{h,-}}$. Moreover, we set
$$ a^*=\inf \left\{a>0: \,\exists f\in U^{h,+} \text{ with } (a,f)\in \nu \right\}.
%=\inf \left\{a>0: \nu^* \left((0,a]\times U\right)>0\right\} .
$$
If $a^*$ is finite, let  $f^*$  be  the only excursion such that
$(a^*, f^*)\in \nu $.  Due to (\ref{zitta}), (\ref{pizza}) and
(\ref{kuka4})
$$
\PP_0^\mu (a^*>a  )= \PP_0^\mu  \left ( \nu^* \left((0,a]\times
U\right)=0\right)= \exp \left\{-a \, n^* ( U )\right\},
$$
therefore
 $a^*$ is an exponential variable with parameter $ n ^*(U)$ (in particular, $a^*$ is finite a.s.).
Due to the representation (\ref{iaco}), $\b =-a^*$. Together with
(\ref{kuka1}) this implies that $-\b$ is an exponential variable
with mean (\ref{lapla}). Moreover, (\ref{iaco}) implies that
\begin{equation}\label{ramnia1}
\s = \int_{ (0,a^* )\times U}    H(f') \nu(da',df')  =
 \int_{ (0,a^* )\times U}    H(f') \nu_* (da',df') .
 \end{equation}
 Due to the above expression and the representation (\ref{iaco}) , the trajectory
$\left(B_t\, ,\,0\leq t\leq \s \right)$ depends only on $\nu_*$ and
$a^*$, while the trajectory $\left(B_{\s+t}-\b\,,\, 0\leq t \leq
\t-\s \right)$ coincides with the excursion $f^*$ stopped when it
reaches level $h$. Since   $\nu_*$ and $a^*$ are independent from
$f^*$ we get the independence of the trajectories.

In order to prove (\ref{lapla1}) we observe that  $-\b = x$ means
that $a^*=x$. Therefore, conditioning  to $-\b =x$, it holds
$$\s =
\int_{ (0,x)\times U}    H(f') \nu_* (da',df'),
$$
thus implying that
\begin{equation}\label{miele}
 \EE_0^\mu\left[ \exp (-\a \s )\,\large{|}\, \b =-x \right]=
 \EE_0 ^\mu  \left(\exp\left\{
 -\a\int_{ (0,x)\times U}    H(f') \nu_* (da',df')  \right\} \right)\,.
 \end{equation}
Note that, in order to derive the above identity, we have used that
$\nu$ is the superposition of the independent point processes
$\nu^*$ and $\nu_*$.

We claim that
\begin{equation}\label{sale}
\EE_0 ^\mu  \left(\exp\left\{
 -\a\int_{ (0,x)\times U}    H(f') \nu_* (da',df')  \right\} \right)=\exp\left\{-x \int _U \left( 1-e^{-\a H(f)}
 \right) n_*(df) \right\} \,.\end{equation}
%(in particular,  the integral in the r.h.s. of the above expression is well defined almost everywhere).
In order to prove this claim we
 note that, due to Ito Theorem and (\ref{kuka4}), the point process
 $\nu_*$ has the same distribution of the Poisson point process on
 $(0,\infty)\times U$ with intensity $dt \times n_*$. Hence, for
 $\a\geq 0$ the above  identity follows directly from  the exponential formula for
 Poisson point processes  \cite{B}[Section O.5]. Suppose now that $\a <0$ and $\hat \a   \geq 0$.  Given $m
  >0$ and $f \in U$ we define $H_m(f)$   as $-\infty$ if $H(f) \leq m$ and as $H(f)$ if
$H(f)>m$. Due to (\ref{saratoga}) and (\ref{rino2}), we get the
bound
$$
\int e^{-\a H_m (f) } n_*(df) \leq  e^{-\a m}  \int _0 ^h  n _m (dy
)  \EE^\mu _y \left ( e^{-\a T_0 } \II _{T_0 < T_h } \right)
$$
where  the r.h.s. is finite due to the form of $n_m$ and identity
(\ref{leone1}). This allows to conclude that  the integral $ \int
e^{-\a H_m (f) } n_*(df)$ is finite,  and therefore the same holds
for the smaller  integral $\int \bigl| 1- e^{-\a H_m (f) }
\bigr|n_*(df )$. This last property allows to apply again the
exponential formula for Poisson point process and to deduce that
\begin{equation*}
\EE_0 ^\mu  \left(\exp\left\{
 -\a\int_{ (0,x)\times U}    H_m (f') \nu_* (da',df')  \right\} \right)=\exp\left\{-x \int _U \left( 1-e^{-\a H_m(f)}
 \right) n_*(df) \right\}\,.
 \end{equation*}
Taking the limit $m \downarrow 0 $ and applying the Monotone
Convergence Theorem we derive (\ref{sale}) from the above identity.
Hence,   (\ref{lapla1}) follows from (\ref{kuka2}), (\ref{miele})
and (\ref{sale}), while
  trivially (\ref{lapla2}) follows from (\ref{lapla1}).

  Finally, in  order to prove (\ref{lapla3}), we observe that
$ \t-\s =T_h(f^* )$.  Since  the path $f^*$ has law $n^*/n^*( U ) $,
$$
\EE_0^\mu  \left( \exp \left( -\a(\t-\s)\right) \right) =n^*(U)^{-1}
\int _U n^* (df) e^{-\a T_h (f)}
$$
  and the thesis follows from (\ref{kuka3}).
\qed

%\begin{rem}\label{pipone}
%From the above  characterizations (\ref{ramnia1}) and
%(\ref{ramnia2}) of the random variables $\s$ and $\t -\s$
%respectively, we deduce that they admit a probability density, i.e.
%their laws are absolutely continuous w.r.t the Lebesgue measure.
%This observation is important  when applying the local central limit
%theorem in the proof of Theorem \ref{napoleone}.
%\end{rem}

%The following lemma extends the result given by   Proposition 1.12
%in \cite{RY}[Chapter XII] valid for $\a>0$ to the case $\hat \a>0$.
%\begin{lemma}\label{solinas}
%Suppose that $\hat \a>0$. Then
%\begin{equation}\label{sale}
%\EE_0 ^\mu  \left(\exp\left\{
% -\a\int_{ (0,x)\times U}    H(f') \nu_* (da',df')  \right\} \right)=\exp\left\{-x \int _U \left( 1-e^{-\a H(f)}
% \right) n_*(df) \right\}\,.\end{equation}
%\end{lemma}
%\begin{proof}
%\end{proof}

%%%%%%%%%%%%%%%%%%%%%%%%%%%%%%%%%%%%%%%%%%%%%%%%%%%%%%%%%%%%%%%%%%%%%%%%%%%%%%%%%%%%%%%%%
%%%%%%%%%%%%%%%%%%%%%%%%%%%%%%%%%%%%%%%%%%%%%%%%%%%%%%%%%%%%%%%%%%%%%%%%%%%%%%%%%%%%%%%%%
%%%%%%%%%%%%%%%%%%%%%%%%%%%%%%%%%%%%%%%%%%%%%%%%%%%%%%%%%%%%%%%%%%%%%%%%%%%%%%%%%%%%%%%%%%
%%%%%%%%%%%%%%%%%%%%%%%%%%%%%%%%%%%%%%%%%%%%%%%%%%%%%%%%%%%%%%%%%%%%%%%%%%%%%%%%%%%%%%%%%

\begin{rem}\label{piponebis}
As already remarked, the analogous of Lemma \ref{davide} (restricted
to $\a>0$)  has been proved for more general spectrally  one--sided
L\'{e}vy processes \cite{AKP}, \cite{Pi} and \cite{C}[Proposition
1], by means of more sophisticated arguments always based on
fluctuation theory, excursion theory and the analysis of the hitting
times of the process. We have given a self--contained and direct
proof based on simple computations, which will be useful also for
the proof of Theorem \ref{bonaparte}, but one can derive Lemma
\ref{davide} from the cited references as follows. The Laplace
exponent of the drifted BM with law $\PP_0^\mu$ is given by $ \psi
(\l)= \frac{1}{2}\l^2- \l \mu$, i.e. $\EE_0 ^{\mu} \bigl( e^{ \l
B_t}\bigr)= e^{t \psi (\l) }$ for $\l \in \RR$. Given $\a>0$ we
define the function $W^{(\a)}$ as
$$
W^{(\a)}=e^{\mu x}\left( e^{x \sqrt{2\hat \a} }-  e^{-x \sqrt{2\hat
\a} }\right)/\sqrt{2\hat \a } \,.$$ Then it is simple to check that
$$ \int_0 ^\infty e^{-\l x} W^{(\a)} (x) dx = \frac{1}{\psi (\l)-\a}
\,, \qquad \forall \l \geq \Phi (\a) \,,
$$
where the value $\Phi (\a) $ is defined as the largest root of $\psi
(\l) = \a$, i.e. $\Phi (\a) = \mu + \sqrt{ 2 \hat \a }$.  The
function $W^{(\a)}$ is related to the exit of the BM from a given
interval. More precisely, due to (\ref{leone2}), it holds
\begin{equation}\label{resistenza1000}
\EE _0 ^\mu ( e^{-\a T_y}  T_y < T_{-x}  )= W^{(\a)}
(x)/W^{(\a)}(x+y) \,, \qquad \forall x,y
>0\,.
\end{equation}
To the function $W^{(\a)}$ one associates the function $Z^{(\a)}$
given by
$$
Z^{(\a)} (x) = 1 + \a \int _0 ^x W^{(\a) } (z) dz=\frac{\a e^{\mu
x}}{\sqrt{2\hat \a} } \left( \frac{e^{\sqrt{2\hat \a}x}}{\mu +
\sqrt{2\hat \a}} -\frac{e^{-\sqrt{2\hat \a}x}}{\mu - \sqrt{2\hat
\a}}\right)
$$
Knowing the values of  $W^{(\a)}$ and $Z^{(\a)}$ one can compute the
expressions in Lemma \ref{davide} for $\a>0$ by applying for example
Proposition 1 in \cite{C}.
\end{rem}

\bigskip

%%%%%%%%%%%%%%%%%%%%%%%%%%%%%%%
%
%
%
%%%%%%%%%%%%%%%%%%%%%%%%%%%%%%%
\section{The behavior of the drifted Brownian motion near to an $h$--extremum}\label{autunno}

In this section we characterize the behavior of an $h$--slope not
covering the origin,  near to  its extremes.
 To this aim we recall the definition of the drifted
Brownian motion Doob--conditioned to hit $+\infty$ before $0$,
referring to \cite{B}[Section VII.2] and references therein for a
more detailful discussion. First, we write $W(x)$ for the function
$$ W(x):=W^{(0)}(x)=
\frac{e^{2x\mu} -1}{\mu}\,$$ ($W^{(\a)} $ has been defined in Remark
\ref{piponebis}).
% The function $\psi$ is characterized by the
%identity $\EE_0 ^{\mu} \bigl( e^{ \l B_t}\bigr)= e^{t \psi (\l) }$,
%$\l \in \RR$. Hence, $\psi$ is the so called \emph{Laplace
%exponent}, or \emph{cumulant}, of the drifted Brownian motion.
Defining $\Phi (0)$ has the largest zero of $\psi(\l):=\l^2/2-\l \mu
$, i.e. $\Phi (0) := 0 \lor (2 \mu)  $, the function $W$ is a
positive increasing function with Laplace transform
$$
\int_0^\infty e^{-\l x} W(x) dx = \frac{1}{\psi(\l)} \,, \qquad
\forall \l > \Phi (0)\,,
$$
satisfying the identity  (see (\ref{resistenza1000}))
\begin{equation}\label{resistenza}
\PP _0 ^\mu ( T_y < T_{-x}  )= W(x)/W(x+y) \,, \qquad \forall x,y
>0\,.
\end{equation}
 Due to the above considerations, the
function $W$ is the so called {\em scale function} of the drifted
Brownian motion with law $\PP ^\mu_0$.

\smallskip

For each $x>0$ consider the new probability measure $\PP ^{\mu,
\uparrow}_x$ on the path space $C\bigl([0,\infty), \RR\bigr)$
characterized by the identity
\begin{equation}\label{jonny}
\PP ^{\mu , \uparrow } _x (\L) = \frac{1}{W(x)} \EE _x ^\mu \left(
W(X_t) , \L, t< T_0 \right)\,, \qquad \L \in \Ff_t\,,
\end{equation}
where $(X_t, t \geq 0)$ denotes a generic element of the path space
$C\bigl([0,\infty), \RR\bigr)$ and $\Ff_t:= \sigma \bigl\{ X_s:
0\leq s \leq t \bigr\}$. As discussed in \cite{B}[Section VII.3],
the above probability measure is well defined,  the weak limit
$\PP^{\mu, \uparrow } _0:= \lim _{x \downarrow 0} \PP ^{\mu,
\uparrow } _x $ exists and the process $\bigl( \PP^{\mu, \uparrow}
_x, x \geq 0 \bigr)$ is a Feller process, hence   strong Markov (we
point out that in \cite{B}[Section VII.3]  the above results are
proven in the Skohorod path space $D( [0,\infty), \RR )$, but one
can adapt the proofs to $C ([0,\infty), \RR)$).  As explained in
\cite{B}[Section VII.3], this process can be thought of as the
Brownian motion with drift $-\mu$ Doob--conditioned to hit $+\infty
$ before $0$. In the case of positive drift, i.e. $\mu <0$, this can
realized very easily  by observing that due to (\ref{resistenza})
\begin{multline*}
\PP _x ^\mu (T_0=\infty)= \PP_0^\mu(T_{-x} =\infty) =
\lim_{y\uparrow \infty} \PP_0^\mu (T_y< T_{-x} ) =\\
 \lim
_{y\uparrow \infty} \frac{W(x)}{W(x+y)}= 1- e^{2x\mu} = -\mu W(x)\,,
\qquad \forall x >0\,,
\end{multline*}
and that this identity together with   the Markov property implies
that
 $$
\PP _x ^{\mu } (\L| T_0 = \infty) = \PP _x ^{\mu,\uparrow} (\L) \,,
\qquad x>0, \L \in \Ff _t \,.
$$
For negative drift the event $\{T_0=\infty\}$ has zero probability,
and a more subtle discussion is necessary.

\begin{lemma}\label{black}
The process
$\PP^{\mu, \uparrow}_0$
 is a diffusion characterized by
the SDE
\begin{equation}\label{partigiani}
d X_t = dB_t +\mu \coth \left( \mu X _t \right) dt\,, \qquad
X_0=0\,,
\end{equation}
where $B_t$ is the standard Brownian motion.
\end{lemma}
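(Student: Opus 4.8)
The plan is to exploit that, by its very definition (\ref{jonny}), the measure $\PP^{\mu,\uparrow}_x$ is the Doob $h$--transform of the drifted Brownian motion killed at the origin, performed through the scale function $W$ viewed as a space--harmonic function. The generator of the process with law $\PP^\mu_0$ is $\Ll=\tfrac12\partial_x^2-\mu\partial_x$, and with $W(x)=(e^{2\mu x}-1)/\mu$ one has $W'(x)=2e^{2\mu x}$ and $W''(x)=4\mu e^{2\mu x}$, so that
\[
\Ll W=\tfrac12 W''-\mu W'=2\mu e^{2\mu x}-2\mu e^{2\mu x}=0,
\]
i.e.\ $W$ is harmonic for the killed process (this is the infinitesimal counterpart of (\ref{resistenza})).

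First I would fix $x>0$ and derive the SDE for the killed diffusion. Writing $X_t=x+B^*_t-\mu t$ under $\PP^\mu_x$, with $B^*$ a standard Brownian motion, It\^o's formula together with $\Ll W=0$ gives $dW(X_t)=W'(X_t)\,dB^*_t$ for $t<T_0$. Hence $M_t:=W(X_{t\wedge T_0})/W(x)$ is exactly the local martingale implementing the change of measure (\ref{jonny}), and by Girsanov's theorem the process $\tilde B_t:=B^*_t-\int_0^t \frac{W'(X_s)}{W(X_s)}\,ds$ is a Brownian motion under $\PP^{\mu,\uparrow}_x$. Therefore $dX_t=d\tilde B_t+\bigl(\tfrac{W'(X_t)}{W(X_t)}-\mu\bigr)\,dt$, and the elementary simplification
\[
\frac{W'(x)}{W(x)}-\mu=\frac{2\mu e^{2\mu x}}{e^{2\mu x}-1}-\mu=\mu\,\frac{e^{2\mu x}+1}{e^{2\mu x}-1}=\mu\coth(\mu x)
\]
yields (\ref{partigiani}) for the process started at any $x>0$.

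Finally I would pass to the limit $x\downarrow 0$. The weak convergence $\PP^{\mu,\uparrow}_x\to\PP^{\mu,\uparrow}_0$ and the Feller, hence strong Markov, property are already available from \cite{B}[Section VII.3]. To conclude that the limit obeys (\ref{partigiani}) with $X_0=0$, I would note that each $\PP^{\mu,\uparrow}_x$ with $x>0$ solves the martingale problem for $\tfrac12\partial_x^2+\mu\coth(\mu x)\partial_x$ on $(0,\infty)$, and then transfer this martingale property to the weak limit, thereby identifying $\PP^{\mu,\uparrow}_0$ as the diffusion with the stated generator started at the origin.

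The hard part will be the behaviour at the origin. Since $\mu\coth(\mu y)=\tfrac1y+O(y)$ as $y\downarrow 0$, the drift is singular at $0$ and agrees to leading order with the drift $1/y$ of the three--dimensional Bessel process; this repelling singularity is precisely what guarantees that the process started at $0$ instantaneously enters $(0,\infty)$, never returns to $0$, and produces a genuine diffusion. Making this rigorous---well--posedness of (\ref{partigiani}) from the singular starting point together with the non--attainment of $0$---can be handled by comparison with the Bessel$(3)$ process (the $\mu\to 0$ limit of the drift), or equivalently by Feller's boundary classification applied to the scale and speed measures of the transformed diffusion, which exhibits $0$ as an entrance boundary.
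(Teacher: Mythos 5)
Your proposal is correct, but it proves the lemma by a genuinely different route than the paper. The paper's proof is analytic and explicit: using (\ref{jonny}) and (\ref{islandina}) it writes the transition density of $\PP^{\mu,\uparrow}_x$ in closed form,
\begin{equation*}
q_t(x,y)=\frac{W(y)\,\bar p^\mu_t(x,y)}{W(x)}
=\frac{\sinh(\mu y)}{\sinh(\mu x)}\,\frac{e^{-\frac{1}{2}\mu^2 t}}{\sqrt{2\pi t}}
\left[e^{-(y-x)^2/2t}-e^{-(y+x)^2/2t}\right],
\end{equation*}
verifies by direct differentiation that $q_t$ solves the Kolmogorov forward equation with drift coefficient $\mu\coth(\mu y)$, reads off the generator $\Ll f=\mu\coth(\mu y)f'+\tfrac12 f''$, and concludes the SDE; the diffusion property (continuous paths, strong Markov) is taken from the earlier discussion based on \cite{B}. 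You instead work pathwise: you check the harmonicity $\tfrac12 W''-\mu W'=0$, identify $W(X_{t\wedge T_0})/W(x)$ as the change-of-measure martingale (note $W(0)=0$, so this martingale automatically encodes the killing), and apply Girsanov to get the drift $W'/W-\mu=\mu\coth(\mu\cdot)$ under $\PP^{\mu,\uparrow}_x$ for $x>0$, then pass to $x\downarrow 0$ via the martingale problem and the weak convergence from \cite{B}. What each buys: the paper's computation yields the explicit density (reused nowhere else but making the generator identification a pure calculus exercise) and silently absorbs the start-from-zero issue into the Feller-property citation; your argument is more structural, would survive in settings where $\bar p^\mu_t$ is not explicit, and — to its credit — isolates the genuinely delicate point, namely that the singular drift $\mu\coth(\mu y)\sim 1/y$ makes $0$ an entrance boundary so that the SDE is meaningful from $X_0=0$. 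That last step in your plan (Bessel(3) comparison or Feller boundary classification) is still a sketch rather than a proof, but the ideas named are the correct ones, and the paper itself is no more detailed on this point.
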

\begin{proof}
As already discussed, the above process has continuous paths and it
is strong Markov, i.e. it is a diffusion.

Due to (\ref{jonny}) and (\ref{islandina}), given $x,y>0$,
\begin{multline}
q_t(x,y):= \PP _x ^{\mu,\uparrow } (X_t \in dy )=\frac{W(y)}{W(x)}
 P_x^{ \mu} \bigl(X_t \in dy , t< T_0 \bigr)= \frac{W(y) \bar p _t
 ^\mu (x,y) }{W(x)} =\\
 \frac{ \sinh (\mu y ) }{\sinh (\mu x) }
  \frac{ e^{-\frac{1}{2} \mu ^2 t }  }  { \sqrt{2\p t } }\left[
 e^{-(y-x)^2 /2t }- e^{-(y+x)^2 /2t }\right]
   \,.
\end{multline}
From the above expression, by direct computations one  derives that
\begin{equation}
\frac{\partial }{\partial t} q_t(x,y) = -\frac{\partial }{\partial
y} \bigl( \mu \coth (\mu y ) q_t (x,y)\bigr) + \frac{1}{2}
\frac{\partial ^2}{\partial y^2} q_t(x,y)\,.
\end{equation}
Hence, the generator of the process is given by
$$
\Ll f(y) = \mu \coth (\mu y) \frac{d}{dy} f(y) + \frac{1}{2}
\frac{d^2}{dy^2} f(y)
$$
and this implies the SDE (\ref{partigiani}).

\end{proof}

Let us consider now the Brownian motion with drift $-\mu$
Doob--conditioned to hit $h$ before $0$ and killed when it reaches
$h$. In order to precise its meaning when the Brownian motion starts
at the origin, given   $0<x<h$,  we define $\PP ^{\uparrow , \mu}
_{x,h}$ as the conditioned law on $C([0,\infty),\RR)$
\begin{equation}\label{pomme}
\PP ^{\mu, \uparrow } _{x,h}(\L) = \PP ^\mu  _x \bigl( \L\,|\,
T_h<T_0\bigr )\,,\qquad  \L \in \cup _{s\geq 0 } \Ff _s \,.
\end{equation}
Note that the above definition is well posed since by
(\ref{resistenza})  $ \PP ^\mu _x ( T_h<T_0 ) = W(x)/W(h)>0$.

\begin{lemma}\label{block}
Given $0<x<h$, let $Q^{\mu, \uparrow }_{x,h}$ and $R^{\mu,
\uparrow}_{x,h}$ be the law of the path $(X_t \,:\, 0\leq t \leq
T_h)$ killed when level $h$ is reached, where $X$ is chosen with law
$\PP^{\mu, \uparrow }_{x,h}$ and $\PP^{\mu, \uparrow}_x $
respectively. Then $Q^{\mu, \uparrow }_{x,h}= R^{\mu, \uparrow}_x$
and the weak limit $Q_{0,h}^{\mu, \uparrow}:=\lim_{x\downarrow 0}
Q_{x,h} ^{\mu,\uparrow}$ exists and equals $R_0^{\mu, \uparrow }$.
%The weak  limit $\PP^{\mu, \uparrow } _{0,h}:= \lim _{x \downarrow
%0} \PP ^{\mu, \uparrow } _{x,h} $ exists on $C([0,\infty), \RR)$
%and the process $\bigl( \PP^{\mu, \uparrow} _{x,h}, 0\leq x\leq h
%\bigr)$ is a Feller process with continuous paths. Moreover, given
%$0\leq x <h$, the killed processes $X_{\k(T_h)} $, with  the
%canonical process $X$ chosen respectively with law $\PP^{\uparrow,
%\mu}_{x,h}$ and $\PP^{\uparrow, \mu}_{x}$, have the same law.
\end{lemma}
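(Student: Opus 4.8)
The plan is to split the statement into two parts: the exact identity $Q^{\mu,\uparrow}_{x,h}=R^{\mu,\uparrow}_{x,h}$ at each fixed $x\in(0,h)$, and the weak convergence of $Q^{\mu,\uparrow}_{x,h}$ as $x\downarrow 0$. The first part is a scale-function computation; the second follows from the already established weak convergence $\PP^{\mu,\uparrow}_x\to\PP^{\mu,\uparrow}_0$ combined with the continuity of the killing operation.

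For the identity at fixed $x$, I would fix an event $\Lambda$ depending only on the truncated path $(X_t:0\le t\le T_h)$. Under $\PP^{\mu,\uparrow}_x$ the process never hits $0$ and drifts to $+\infty$, so $T_h<T_0=\infty$ a.s.\ and $X_{T_h}=h$; hence $R^{\mu,\uparrow}_{x,h}(\Lambda)=\PP^{\mu,\uparrow}_x(\Lambda)$. Since $W$ is the scale function, $W(X_{s\wedge T_h\wedge T_0})$ is a martingale bounded by $W(h)$, so optional stopping at $T_h$ is licit and, starting from the defining relation (\ref{jonny}), gives
\begin{equation*}
\PP^{\mu,\uparrow}_x(\Lambda)=\frac{1}{W(x)}\EE^\mu_x\bigl(W(X_{T_h}),\,\Lambda,\,T_h<T_0\bigr)=\frac{W(h)}{W(x)}\,\PP^\mu_x\bigl(\Lambda,\,T_h<T_0\bigr),
\end{equation*}
where $X_{T_h}=h$ on $\{T_h<T_0\}$ was used. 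On the other hand, (\ref{pomme}) together with $\PP^\mu_x(T_h<T_0)=W(x)/W(h)$ yields $Q^{\mu,\uparrow}_{x,h}(\Lambda)=\PP^\mu_x(\Lambda\,|\,T_h<T_0)=(W(h)/W(x))\PP^\mu_x(\Lambda,T_h<T_0)$. The two right-hand sides coincide, proving $Q^{\mu,\uparrow}_{x,h}=R^{\mu,\uparrow}_{x,h}$.

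For the limit, by the first part it suffices to show that the killed-path laws $R^{\mu,\uparrow}_{x,h}$ converge weakly to $R^{\mu,\uparrow}_{0,h}$, the killed-path law of $\PP^{\mu,\uparrow}_0$ (denoted $R_0^{\mu,\uparrow}$ in the statement). I would use the known weak convergence $\PP^{\mu,\uparrow}_x\to\PP^{\mu,\uparrow}_0$ in $C([0,\infty),\RR)$ and the continuous mapping theorem applied to the killing map $\kappa\colon X\mapsto(X_t:0\le t\le T_h(X))\in\Ww$, whose pushforwards are exactly the laws $R^{\mu,\uparrow}_{\cdot,h}$. The point to check is that $\kappa$ is continuous at $\PP^{\mu,\uparrow}_0$-a.e.\ path. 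The map $\kappa$ is continuous at every path that crosses level $h$ transversally at $T_h$, since then uniform-on-compacts convergence $X^{(n)}\to X$ forces $T_h(X^{(n)})\to T_h(X)$ and hence convergence of the truncations in the metric $d_\Ww$. By Lemma \ref{black}, under $\PP^{\mu,\uparrow}_0$ the process is the regular diffusion (\ref{partigiani}); being transient to $+\infty$ it reaches $h$ in finite time, and since $h$ is a regular interior point it immediately enters $(h,\infty)$ after $T_h$, so the crossing is transversal a.s. Thus $\kappa$ is a.s.\ continuous and the continuous mapping theorem gives both the existence of the limit and its identification with $R_0^{\mu,\uparrow}$.

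The main obstacle is the second part. The delicate points are, first, confirming that $T_h$ is an a.s.\ continuity point of the path functional, i.e.\ the transversality of the crossing at $h$, which I would read off from the regularity of the diffusion (\ref{partigiani}); and second, controlling the degeneracy at the starting point as $x\downarrow 0$, where the drift $\mu\coth(\mu X_t)$ blows up like $1/X_t$ so that $\PP^{\mu,\uparrow}_0$ leaves the origin instantaneously. One must ensure that along the convergence $T_h$ stays finite and bounded away from the degeneracy, so that the truncated paths remain in a well-behaved region of $\Ww$ and $d_\Ww$-convergence genuinely follows from the uniform convergence of the full paths.
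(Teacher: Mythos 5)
Your proposal is correct and follows the same two--step strategy as the paper (prove the identity at fixed $x$ via the scale function, then deduce the limit from the weak convergence $\PP^{\mu,\uparrow}_x\to\PP^{\mu,\uparrow}_0$), but the implementations differ in both steps. For the identity $Q^{\mu,\uparrow}_{x,h}=R^{\mu,\uparrow}_{x,h}$, the paper avoids any stopping--time extension of (\ref{jonny}): it fixes finite--dimensional events $\Aa=\{X_{t_1}\in dx_1,\dots,X_{t_n}\in dx_n,\ t_n<T_h\}$, applies the Markov property at the deterministic time $t_n$ to get $\PP^{\mu,\uparrow}_{x,h}(\Aa)=\PP^\mu_x(\Aa,\,t_n<T_0)\,W(x_n)/W(x)$, and recognizes the right--hand side as $\PP^{\mu,\uparrow}_x(\Aa)$ by (\ref{jonny}). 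Your route instead upgrades (\ref{jonny}) to the stopping time $T_h$ by optional stopping of the bounded martingale $W(X_{\cdot\wedge T_h\wedge T_0})$; this is equivalent, but to be complete it needs the standard approximation of $\Ff_{T_h}$--events by $\Ff_{T_h\wedge t}$--events together with the fact that $T_h<\infty$ a.s.\ under $\PP^{\mu,\uparrow}_x$, a step you invoke but should spell out. For the limit $x\downarrow 0$, the paper is laconic: it simply cites the weak convergence $\PP^{\mu,\uparrow}_x\to\PP^{\mu,\uparrow}_0$ and leaves the continuity of the killing operation implicit. Your continuous--mapping argument --- a.s.\ continuity of the killing map $\kappa$ at paths which hit $h$ in finite time and immediately cross it, which holds under $\PP^{\mu,\uparrow}_0$ by transience to $+\infty$ and regularity of the diffusion (\ref{partigiani}) --- supplies exactly the justification the paper omits, so on this point your proof is more complete than the paper's. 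One remark: your closing worry about the degeneracy of the drift $\mu\coth(\mu X_t)$ near the origin is not actually an obstacle, since the continuous mapping theorem only requires a.s.\ continuity of $\kappa$ under the limit law $\PP^{\mu,\uparrow}_0$, which your transversality argument already provides.
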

\begin{proof}
Given    $0<x_1,x_2, \dots, x_n <h$ and times $t_1< t_2< \cdots<
t_n$, we denote by $\Aa$ the event
$$
\Aa:=\left\{X_{t_1}\in d x_1, X_{t_2} \in d x_2, \dots , X_{t_n} \in
d x_n, t_n < T_h\right\}\,.
$$Then,
by  definition of $\PP^{\mu, \uparrow}_{x,h}$ and the Markov
property of the Brownian motion, for each $0<x<h$ we get that
%\begin{multline*}
$$
\PP_{x,h}^{\mu, \uparrow} (\Aa)=
 \PP _x ^\mu  (\Aa, t_n < T_0 )
\PP_{x_n} ^\mu (T_h<T_0)/ \PP^\mu _x (T_h<T_0) = \PP _x ^\mu  (\Aa,
t_n < T_0 )W(x_n)/W(x)\,.
$$
%\end{multline*}
Due to (\ref{jonny}), the last expression in the r.h.s. equals the
probability $ \PP^{\mu,\uparrow }_x(\Aa) $, hence we can conclude
that $\PP_{x,h}^{\mu, \uparrow} (\Aa)=\PP_{x}^{\mu, \uparrow}
(\Aa)$. Hence $Q_{x,h}^{\mu,\uparrow }= R_{x,h} ^{\mu, \uparrow}$
for $0<x<h$. The last statement concerning $Q_{0,h}^{\mu, \uparrow}$
follows from the fact that the weak  limit $\lim_{x\downarrow 0}
\PP_x ^{\mu, \uparrow} $ exists and equals $\PP_0 ^{\mu, \uparrow
}$.
\end{proof}

Due to the first part of the above lemma, we can think of $\bigl(
\PP^{\mu, \uparrow} _{x,h}, 0\leq x\leq h \bigr)$ as the Brownian
motion with drift $-\mu$ Doob--conditioned to hit $h$ before $0$ and
killed when it hits $h$.

\medskip

We have now all the tools in order to describe the behavior of the
$h$--slopes not covering the origin, near to their extremes. In
order to simplify the  notation, in what follows we denote  by
$B^{(\mu)}$ the two--sided Brownian motion with drift $-\mu$,
starting at the origin. Moreover, given $r\in \RR$,  we define
\begin{align*}
& T_r ^{(h,+)} =\inf \left\{s>0\,:\, \left| B_{r+s}^{(\mu)} -B_r^{(\mu)}\right|=h \right\}\,,\\
& T_r ^{(h,-)} =\inf \left\{s>0\,:\, \left| B_{r-s}^{(\mu)} -B_r^{(\mu)}\right|=h \right\}\,.
\end{align*}

%Given $\mu\not =0$, we  denote by $R^{(\mu)}$ the process  satisfying
%\begin{equation}
%d R^{(\mu)}_t = dB_t +\mu \coth \left( \mu R^{(\mu)} _t \right) dt\,,\qquad R_0 =0\,,
%\end{equation}
%where $B_t $ is the standard Brownian motion.
%One can prove (see \cite{RW2}[Section VI.55] and the proof below)
%that the law of $R^{(\mu)}$ coincides with the  law of $B^{(\mu)}$
%  Doob--conditioned to hit infinity before $0$.

%%%%%%%%%%%%%%%%%%%

\begin{theo}\label{bonaparte}
Let $\mu\not=0$ and let  $m<m'$ be consecutive points  of
$h$--extrema for the drifted Brownian motion $B^{(\mu)}$, both non
negative or both non positive.

If $m$ is a pont of $h$--minimum and  $m'$ is a point of
$h$--maximum, then  the  processes
\begin{align}
 &  \left\{ B^{(\mu)} _{m+t } -B^{(\mu)} _{m} \,,\, 0\leq t\leq  T_m ^{(h,+)}    \right\}\,, \label{mela1}\\
 &   \left\{ B^{(\mu)} _{m'} -  B^{(\mu)} _{m'-t} \,,\, 0\leq t\leq      T_{m'} ^{(h,-)}     \right\} \,, \label{mela2}
%&  \left\{ R^{(\mu) }_t \,,\, 0\leq t\leq T_h \right\}\,. \label{mela3}
 \end{align}
%$$ \left\{ B^{(\mu)} _{m+t } -B^{(\mu)} _{m} \,,\, 0\leq t\leq  T_m ^{(h,+)}    \right\} $$
have the same law of the Brownian motion starting at the origin,
with drift $-\mu$, Doob--conditioned to reach $+\infty$ before $0$
and killed when it hits $h$. Moreover, they  have the same law of
the Brownian motion starting at the origin, with drift $-\mu$,
Doob--conditioned to reach $h$ before $0$ and killed when it hits
$h$. In particular, they  satisfy the SDE (\ref{partigiani}) up to
the  killing  time.

%\begin{align}
% &  \left\{ B^{(\mu)} _{m+t } -B^{(\mu)} _{m} \,,\, 0\leq t\leq  T_m ^{(h,+)}    \right\}\,, \label{mela1}\\
% &   \left\{ B^{(\mu)} _{m'} -  B^{(\mu)} _{m'-t} \,,\, 0\leq t\leq      T_{m'} ^{(h,-)}     \right\} \,, \label{mela2}\\
%&  \left\{ R^{(\mu) }_t \,,\, 0\leq t\leq T_h \right\}\,. \label{mela3}
% \end{align}

 If $m$ is a point of $h$--maximum  and  $m'$ is a point of
$h$--minimum, then the  processes
\begin{align}
&   \left\{ B^{(\mu)} _{m} - B^{(\mu)} _{m+t} \,,\, 0\leq t\leq     T_m ^{(h,+)}    \right\} \,, \label{pera1}\\
 &  \left\{ B^{(\mu)} _{m'-t } -B^{(\mu)} _{m'} \,,\, 0\leq t\leq   T_{m'} ^{(h,-)}  \right\}\,,
 \label{pera2}
 %& \left\{ R^{(-\mu) }_t \,,\, 0\leq t\leq T_h \right\}\,.\label{pera3}
\end{align}
have the same law of the Brownian motion starting at the origin,
with drift $\mu$, Doob--conditioned to reach $+\infty$ before $0$
and killed when it hits $h$. Moreover, they  have the same law of
the Brownian motion starting at the origin, with drift $\mu$,
Doob--conditioned to reach $h$ before $0$ and killed when it hits
$h$. In particular, they satisfy the SDE (\ref{partigiani}) with
$\mu$ replaced by $-\mu$,  up to the killing  time.
\end{theo}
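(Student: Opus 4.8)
The plan is to reduce the four assertions to a single statement about the forward excursion emanating from an $h$--minimum, and then to identify that excursion, through It\^o's measure, with the conditioned diffusion of Lemma~\ref{black}.

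First I would record two symmetries of the two--sided drifted Brownian motion. Reflection $B\mapsto -B$ sends $\PP^\mu_0$ to $\PP^{-\mu}_0$, exchanges $h$--minima with $h$--maxima, and turns a forward piece read from an $h$--maximum of $B$ (as in \eqref{pera1}) into a forward piece read from an $h$--minimum of the reflected process. Global time reversal $\hat B_t:=B_{-t}$ again sends $\PP^\mu_0$ to $\PP^{-\mu}_0$ (since $t\mapsto B^*_{-t}$ is a standard two--sided Brownian motion), preserves the two types of $h$--extrema, preserves consecutiveness and the ``not covering the origin'' condition, and turns a \emph{backward} piece read from an extremum of $B$ into a \emph{forward} piece read from the mirrored extremum of $\hat B$; the stopping times transform accordingly, e.g.\ $T_{m'}^{(h,-)}$ becomes a forward $T^{(h,+)}$ for $\hat B$. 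Composing these two maps sends each of \eqref{mela1}--\eqref{pera2} to a forward piece emanating from an $h$--minimum, carried by a process of drift $-\mu$ in the cases \eqref{mela1},\eqref{mela2} and of drift $+\mu$ in the cases \eqref{pera1},\eqref{pera2}, exactly matching the two conditioned laws in the statement. (Note that the drift $\mu\coth(\mu X)$ in \eqref{partigiani} is invariant under $\mu\mapsto-\mu$, so all four pieces solve the same SDE, consistent with the fact that the two conditioned laws coincide.)

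It thus suffices to treat one case: for $B$ with law $\PP^\mu_0$ and $m$ a point of $h$--minimum, show that $\bigl(B_{m+t}-B_m,\ 0\le t\le T_m^{(h,+)}\bigr)$ has law $R^{\mu,\uparrow}_0$. Since $m$ is an $h$--minimum the path stays above $B_m$ until it first reaches $B_m+h$, so $T_m^{(h,+)}$ is the first hitting time of level $h$ and the piece is precisely the initial upward climb. By the construction in the proof of Theorem~\ref{napoleone} (strong Markov property at the times $\tau_n$ together with Lemma~\ref{davide}), this climb has the law of $\bigl(B_{\sigma+t}-\beta,\ 0\le t\le\tau-\sigma\bigr)$ under $\PP^\mu_0$, which in the proof of Lemma~\ref{davide} was identified with the first excursion $f^*$ reaching height $h$, stopped at $T_h$. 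Hence the piece has law $n|_{U^{h,+}}/n(U^{h,+})$ stopped at $T_h$, with $n$ the It\^o measure of Lemma~\ref{japan}.

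The heart of the proof is to show that $n|_{U^{h,+}}/n(U^{h,+})$ stopped at $T_h$ equals $Q^{\mu,\uparrow}_{0,h}$, which I would do by comparing finite--dimensional distributions. Fix $0<y_1<\cdots<y_k<h$ and $t_1<\cdots<t_k$. Using the Markov property \eqref{rino2} of $n$ at time $t_k$ and the exit identity $\PP^\mu_y(T_h<T_0)=W(y)/W(h)$ from \eqref{resistenza}, the $n$--measure of the event that the excursion passes through $dy_i$ at $t_i$, stays in $(0,h)$ up to $t_k$, and later reaches $h$ before $0$ factorizes into the entrance term $n_{t_1}(y_1)$, the transition kernel of drift--$(-\mu)$ Brownian motion killed at $\{0,h\}$ over the intermediate intervals, and the final weight $W(y_k)/W(h)$. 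Dividing by $n(U^{h,+})$ from \eqref{kuka1}, this is exactly the $W$--Doob transform of the killed kernel, started from the density $n_{t_1}(y_1)W(y_1)/\bigl(W(h)n(U^{h,+})\bigr)$. The key computation is that this starting density equals the entrance law of $\PP^{\mu,\uparrow}_0$: using $W(h)n(U^{h,+})=2$ (immediate from \eqref{kuka1} and $W(h)=2e^{\mu h}\sinh(\mu h)/\mu$), the explicit $n_t$ of \eqref{roma}, and the small--$x$ asymptotics $W(x)\sim 2x$ and $\bar p_t^\mu(x,y)\sim 2xy\,t^{-1}(2\pi t)^{-1/2}e^{-(y+\mu t)^2/2t}$ read off from \eqref{aquila}, one checks $\tfrac12 W(y)n_t(y)=\lim_{x\downarrow0}\tfrac{W(y)}{W(x)}\bar p_t^\mu(x,y)$, i.e.\ the two entrance laws agree. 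Since both processes thereafter evolve by the same $W$--transformed killed kernel, their laws coincide, so the excursion law equals $R^{\mu,\uparrow}_0=Q^{\mu,\uparrow}_{0,h}$ by Lemma~\ref{block}, and finally satisfies \eqref{partigiani} by Lemma~\ref{black}. The main obstacle is precisely this entrance--law matching: the symmetry reductions are bookkeeping, but reconciling the normalizations of It\^o's excursion measure and of the $h$--transform hinges on the exact constant $W(h)n(U^{h,+})=2$ and the $x\downarrow0$ asymptotics above.
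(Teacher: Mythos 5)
Your proposal follows the same skeleton as the paper's proof: reduce everything to the forward climb \eqref{mela1} from an $h$--minimum, identify it (via the proof of Lemma \ref{davide} and the construction in Theorem \ref{napoleone}) with the excursion law $n(\cdot\,|\,T_h<T_0)$ stopped at $T_h$, and then recognize this conditioned excursion as the $W$--Doob transform started at $0$. Where you genuinely diverge is at that last step: the paper imports Proposition 15 of \cite{B}[Section VII.3], i.e.\ the absolute--continuity relations (\ref{crick})--(\ref{crock}), as a black box, applies (\ref{crick}) to $\L=\{f_{t_i}\in dx_i,\ t_n<T_h\}\in\Ff_{t_n}$, multiplies by $\PP^\mu_{x_n}(T_h<T_0)=W(x_n)/W(h)$ and lets the factors of $W$ and $c$ cancel against $n(T_h<T_0)=c/W(h)$. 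You instead re--derive that relation by hand from the explicit formulas (\ref{aquila}) and (\ref{roma}): your entrance--law identity $\lim_{x\downarrow 0}W(y)\bar p_t^\mu(x,y)/W(x)=\tfrac12 W(y)n_t(y)$ is correct and is exactly Bertoin's relation with $c=2$, as is the normalization $W(h)\,n(U^{h,+})=2$, which the paper itself notes after (\ref{crock}). This buys a self--contained argument using only the Gaussian formulas of Lemma \ref{japan}, at the price of losing the generality (spectrally one--sided L\'evy processes) of the cited result. Your reduction of \eqref{mela2}, \eqref{pera1}, \eqref{pera2} by composing time reversal with coordinate reflection is also correct, and indeed more explicit than the paper's one--line appeal to reflection arguments together with Lemmata \ref{black} and \ref{block}.

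One step needs repair, though with tools you already have. Your factorization of the $n$--measure of $\{f_{t_i}\in dy_i\ \forall i,\ t_k<T_h,\ T_h<T_0\}$ is internally inconsistent as written: if the path must stay in $(0,h)$ up to $t_k$, the constraint acts on $[0,t_1]$ as well, so the entrance term is $n(f_{t_1}\in dy_1,\ t_1<T_h\wedge H)$, not $n_{t_1}(y_1)\,dy_1$, and your single--time matching $\tfrac12 Wn_t=q_t(0,\cdot)$ does not literally apply to it; conversely, with the unkilled entrance term the intermediate kernels must not be killed at $h$. The clean fix: first establish $n(\L,\ t<H)=2\,\EE_0^{\mu,\uparrow}\bigl(W(X_t)^{-1},\L\bigr)$ for cylinder events $\L$ carrying no $h$--constraint --- here your entrance--law identity and the transformed kernels $q_s(x,y)=W(y)\bar p_s(x,y)/W(x)$ telescope exactly against the fdds $n_{t_1}\bar p\cdots\bar p$ of $n$ given by (\ref{rino2}) --- then extend to all $\L\in\Ff_t$ by a monotone class argument, and finally insert $\{t_k<T_h\}$ as an $\Ff_{t_k}$--measurable event and append the exit weight $W(y_k)/W(h)$ via the Markov property; this is verbatim the paper's computation (\ref{cruck}), with (\ref{crick}) now proved rather than cited.
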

\begin{proof}
The second part of the theorem follows from the first part by taking
the reflection w.r.t. the coordinate axis.

As follows from the proof of Lemma \ref{davide} in Section
\ref{pierpa}, the law of the process (\ref{mela1})  coincides with
the law of the excursion $f$ killed when it reaches $h$, where $f$
is chosen with probability measure
$$
n( \cdot | T_h<T_0 )\,= \, n \bigl( \cdot ,  T_h < T_0 \bigr) /
n(T_h< T_0 )\,.
$$
 Due to Proposition 15 in \cite{B}[Section VII.3],
there exists a positive constant $c$ such that
\begin{align}
&  n \bigl( \L , t< T_0 \bigr) \,=\,  c\, \EE _0 ^{\mu, \uparrow }
\bigl(
W(X_t)^{-1} , \L \bigr) \,, \qquad \forall \L \in \Ff _t\,, \label{crick}\\
& n ( T_h < T_0 ) = c /W(h)\label{crock}
\end{align}
(note that (\ref{crock}) corresponds to (\ref{kuka1}) with $c=2$).
Hence,  given  numbers $x_1, x_2, \dots, x_n$ in $(0,h)$ and
increasing times $0<t_1<t_2 < \cdots < t_n$, we have
\begin{equation}\label{cruck}
\begin{split}
& n \bigl( f_{t_1} \in d x_1, \, f_{t_2} \in
 d x_2,\, \dots,\, f_{t_n} \in d x_n \,, t_n <T_h < T_0 \bigr) =\\
 &
 n \bigl( f_{t_1} \in d x_1, \, f_{t_2} \in
 d x_2,\, \dots,\, f_{t_n} \in d x_n \,, t_n <T_h , t_n < T_0 \bigr)
 \PP ^\mu _{x_n} ( T_h < T_0)=\\
 & c\, \PP^{\mu, \uparrow }_0 \bigl(
f_{t_1} \in d x_1, \, f_{t_2} \in
 d x_2,\, \dots,\, f_{t_n} \in d x_n \,, t_n <T_h \bigr)
 /W(h)
\end{split}
\end{equation}
(the first identity follows from the Markov property (\ref{rino2}),
while the latter follows from (\ref{resistenza}) and (\ref{crick})).
From
 (\ref{crock}) and (\ref{cruck}) one derives that
 \begin{multline}
n \bigl( f_{t_1} \in d x_1, \, f_{t_2} \in
 d x_2,\, \dots,\, f_{t_n} \in d x_n \,, t_n <T_h | T_h< T_0
 \bigr)=\\
\PP^{\mu, \uparrow }_0 \bigl( f_{t_1} \in d x_1, \, f_{t_2} \in
 d x_2,\, \dots,\, f_{t_n} \in d x_n \,, t_n <T_h \bigr)\,.
 \end{multline}
This concludes the proof that the process (\ref{mela1}) has the same
law of the Brownian motion starting at the origin, with drift
$-\mu$, Doob--conditioned to reach $+\infty$ before $0$ and killed
when it hits $h$. All the other statements follow from this
property,
  Lemmata \ref{black} and \ref{block} and by reflection arguments.

\end{proof}

\section{Proof of Lemma \ref{kuka}}\label{primavera}
%

%%%%%%%%%%%%%%%%%%%%%%%%%%%%%%
%%%%%%%%%%%%%%%%%%%%%%%%%%%%%%%%
%%%%%%%%%%%%%%%%%%%%%%%%%%%%%%%%

Knowing the Laplace transform of the hitting times of the non--drifted Brownian motion
 \cite{KS}[Chapter 2], the following lemma follows by applying  Girsanov formula
 (\ref{aperol}):
 \begin{lemma} \label{corvo}
Let $x<0<y$ and $\hat \a >0$, then
\begin{align}
& \EE _0 ^{\mu} \left( e^{-\a T_x }\II_{T_x<T_y } \right)=
 e^{-\mu x }\frac{ \sinh \left( y \sqrt{2 \hat \a } \right) }{\sinh \left( (y-x) \sqrt{ 2\hat \a}\right) },\label{papera1}
\\
&
 \EE _0 ^{\mu} \left( e^{-\a T_y }\II_{T_y<T_x } \right)=
 e^{-\mu y  }\frac{ \sinh \left( -x  \sqrt{2 \hat \a } \right) }{\sinh \left( (y-x)\sqrt{ 2\hat \a}\right)}
\label{papera2} . \end{align}
\end{lemma}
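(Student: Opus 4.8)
The plan is to deduce the drifted identities from the classical hitting-time formulas for standard Brownian motion via the Girsanov change of measure (\ref{aperol}). I would prove (\ref{papera1}) in detail; (\ref{papera2}) then follows by the identical argument with the roles of $x$ and $y$ interchanged. First I would record the non-drifted computation: setting $u(z):=\EE_z\bigl(e^{-\hat\a T_x}\II_{T_x<T_y}\bigr)$ for $z\in[x,y]$, the function $u$ solves $\tfrac12 u''=\hat\a u$ on $(x,y)$ with $u(x)=1$ and $u(y)=0$ (the standard Laplace-transform-of-exit-time computation of \cite{KS}[Chapter 2]), whence
\begin{equation}\label{nondrift}
\EE_0\bigl(e^{-\hat\a T_x}\II_{T_x<T_y}\bigr)=\frac{\sinh\bigl(y\sqrt{2\hat\a}\bigr)}{\sinh\bigl((y-x)\sqrt{2\hat\a}\bigr)}\,.
\end{equation}

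Next I would transfer (\ref{nondrift}) to the drifted law. Put $\rho:=T_x\wedge T_y$. Since $x<0<y$, under $\PP_0$ one has $\rho<\infty$ a.s.\ and $B_s\in[x,y]$ for every $s\leq\rho$, so the exponential martingale $Z_{s\wedge\rho}=\exp\{-\mu B_{s\wedge\rho}-\mu^2(s\wedge\rho)/2\}$ is uniformly bounded, hence uniformly integrable and convergent a.s.\ and in $L^1$ to $Z_\rho$. To upgrade the Girsanov identity (\ref{aperol}) from deterministic times to the stopping time $\rho$, I would apply it to the $\Ff_t$--measurable functional $g\,\II_{\rho\leq t}$, where $g:=e^{-\a T_x}\II_{T_x<T_y}$ is $\Ff_\rho$--measurable; using $\EE_0\bigl(Z_t\,\big|\,\Ff_\rho\bigr)=Z_\rho$ on $\{\rho\leq t\}$ and letting $t\uparrow\infty$ so that $\II_{\rho\leq t}\uparrow1$ yields $\EE_0^\mu(g)=\EE_0(g\,Z_\rho)$.

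Finally I would evaluate $\EE_0(g\,Z_\rho)$. On $\{T_x<T_y\}$ we have $\rho=T_x$ and $B_\rho=x$, so $Z_\rho=e^{-\mu x}e^{-\mu^2 T_x/2}$; pulling out the constant $e^{-\mu x}$ and absorbing $\mu^2/2$ into the exponent gives
\begin{equation}
\EE_0^\mu\bigl(e^{-\a T_x}\II_{T_x<T_y}\bigr)=e^{-\mu x}\,\EE_0\bigl(e^{-(\a+\mu^2/2)T_x}\II_{T_x<T_y}\bigr)=e^{-\mu x}\,\EE_0\bigl(e^{-\hat\a T_x}\II_{T_x<T_y}\bigr)\,,
\end{equation}
because $\hat\a=\a+\mu^2/2$; together with (\ref{nondrift}) this is exactly (\ref{papera1}), and the symmetric computation on $\{T_y<T_x\}$ (where $B_\rho=y$) produces (\ref{papera2}). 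I expect the only genuinely delicate point to be the passage of the Girsanov relation to the stopping time $\rho$, which rests on the boundedness of $Z_{\cdot\wedge\rho}$ secured by $B_s\in[x,y]$; the remaining steps are a boundary-value problem and routine bookkeeping of the drift factor.
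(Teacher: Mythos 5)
Your proof is correct and follows essentially the same route as the paper: both upgrade the fixed-time Girsanov identity (\ref{aperol}) to the exit time by truncating with an indicator (the paper uses $\II_{T_x<t}$ and conditions on $\Ff_{t\wedge T_x}$; you use $\II_{\rho\leq t}$ with $\rho=T_x\wedge T_y$ and condition on $\Ff_\rho$), invoking optional sampling for the bounded stopped martingale and passing to the limit $t\uparrow\infty$ by monotone convergence, so that the factor $e^{-\mu x}$ and the shift $\a\mapsto\hat\a$ emerge identically. The only cosmetic difference is that you rederive the non-drifted two-sided exit formula from the boundary-value problem $\tfrac12 u''=\hat\a u$, where the paper simply cites formula (8.27) of \cite{KS}, and you handle (\ref{papera2}) by swapping $x$ and $y$ where the paper alternatively offers the reflection $\mu\mapsto-\mu$.
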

%%%%%%%%%%%%%%%%%%%%%%%%%%%%%
\begin{proof}
Set $Z_t =\exp \left\{ -\mu B_t-\mu^2 t/2\right\}$.
We claim that
\begin{multline*}
\EE _0 ^{\mu} \left( e^{-\a T_x }\II_{T_x<T_y } \right)=
\lim _{t\uparrow\infty} \EE _0 ^{\mu} \left( e^{-\a T_x }\II_{T_x<T_y } \II _{T_x <t } \right)=\\
\lim _{t\uparrow\infty} \EE_0   \left( e^{-\a T_x }\II_{T_x<T_y } \II _{T_x <t } Z_t \right)=
\lim _{t\uparrow\infty} \EE_0   \left( e^{-\a T_x }\II_{T_x<T_y } \II _{T_x <t }
\EE_0(Z_t \,|\,  \Ff_{t\wedge T_x}  )\right).
\end{multline*}
Indeed, the first identity follows from the Monotone  Convergence
Theorem, the second one from (\ref{aperol}), and the last one by
conditioning on  $\Ff_{t\wedge T_x}$ and observing that $e^{-\a T_x
} \II_{T_x<T_y } \II _{T_x <t }$ is $\Ff_{t\wedge T_x}$ measurable.

Since, under $\PP _0$,  $Z_t$ is a martingale  and $t\wedge T_x$ is
a bounded stopping time, the optional sampling theorem implies that
$ \EE\left(Z_t \,|\,  \Ff_{t\wedge T_x}  \right)= Z_{ t\wedge T_x}.
$ For $T_x<t$,  $ Z_{t\wedge T_x }$ equals $ \exp\left( -\mu x -\hat
\a T_x +\a T_x\right) $, hence
$$
\EE _0 ^\mu \left( e^{-\a T_x }\II_{T_x<T_y } \right)=\lim
_{t\uparrow\infty} e^{-\mu x} \EE_0   \left( e^{-\hat \a T_x
}\II_{T_x<T_y } \II _{T_x <t } \right) = e^{-\mu x} \EE_0 \left(
e^{-\hat \a T_x }\II_{T_x<T_y }  \right).
$$
Since $\hat\a>0$,
the above identity together with formula (8.27)  in \cite{KS}[Chapter 2] implies (\ref{papera1}).

The proof of (\ref{papera2}) can be obtained by similar arguments and by
 formula (8.28) in \cite{KS}[Chapter 2] or simply by observing that
 $$
 \EE _0 ^{\mu} \left( e^{-\a T_y}\II_{T_y<T_x } \right)=
 \EE _0 ^{-\mu } \left( e^{-\a T_{-y }}\II_{T_{-y}<T_{-x} } \right)
 $$
and then applying (\ref{papera1}).
\end{proof}

Due to the above lemma, given $0<y<h$ and $\hat \a>0$,
\begin{align}
& \EE _y ^{\mu} \left( e^{-\a T_0 }\II_{T_0<T_h } \right)=
 e^{\mu y }\frac{ \sinh \left( (h-y ) \sqrt{2 \hat \a } \right) }{\sinh \left( h \sqrt{ 2\hat \a}\right)
 },
\label{leone1}\\
&
 \EE _y ^{\mu} \left( e^{-\a T_h }\II_{T_
 h<T_0 } \right)=
 e^{\mu (y-h) }\frac{ \sinh \left( y  \sqrt{2 \hat \a } \right) }{\sinh \left( h \sqrt{ 2\hat \a}\right)}\label{leone2},\\
 & \PP _y^\mu \left (T_0 < T_h \right) = e^{\mu y} \frac{ \sinh\left(\mu(h-y)\right)}{\sinh(\mu h )}
 \label{leone3} ,\\
 & \PP _y ^\mu \left (T_h < T_0 \right) = e^{\mu (y-h)} \frac{ \sinh\left(\mu y\right)}{\sinh(\mu h)}.
 \label{leone4}
 \end{align}
 By taking the limit
   $h\rightarrow\infty$ in (\ref{leone1}) we get for all $y>0$ and $\hat \a >0$
 \begin{equation}\label{leone5}
 \EE _y ^{\mu} \left( e^{-\a T_0 }\II_{T_0<\infty} \right)=
 e^{\mu y -y\sqrt{2\hat\a }}=e^{\mu y -|\mu| y \sqrt{ 1 + \frac{2\a}{\mu^2 }}} .
\end{equation}
By considering the  Taylor expansion around $\a=0$ in  above
identity, one can compute the expectation  of
$T_0^k\,\II_{T_0<\infty}$. In particular, for all $y >0$ it holds
\begin{equation}\label{leone6} \EE_y ^\mu ( T_0 \II_{T_0<\infty} ) =
e^{y(\mu-|\mu|)}  y/|\mu|.
\end{equation}

\medskip

 We collect some   identities (obtained by straightforward
computations) which will be very useful below. First we observe that
given $a,b,w \in \RR$ and $t>0$ it holds
\begin{equation}
\int_a ^b \frac{2y }{\sqrt{2\p t^3 }}e^{ -\frac{(y+w t )^2 }{2t}}dy
=\frac{2}{\sqrt{2\p t}} e^{ -\frac{(a+w t )^2
}{2t}}-\frac{2}{\sqrt{2\p t}} e^{ -\frac{(b+w t )^2 }{2t}}-\frac{2w
}{\sqrt{2\p}} \int _{a/\sqrt{t}+w\sqrt{t}}  ^{b/\sqrt{t}+w \sqrt{t}}
e^{-\frac{z^2}{2}}dz. \label{pipolo}
\end{equation}
In particular,  fixed $a>0$ and $c,w  \in \RR$, it holds as
$t\downarrow 0$
 \begin{align} &  \int _0 ^a\frac{2y }{\sqrt{2\p t^3
}}e^{ -\frac{(y+w t )^2 }{2t}}dy =\frac{2}{\sqrt{2\p t} } - w + o(1) \label{kaka1}\\
&  \int _a ^\infty \frac{2y }{\sqrt{2\p t^3 }}e^{ -\frac{(y+w t )^2
}{2t}}dy =  o(1)\label{kaka2}\\
&  \int_0 ^a \frac{2y }{\sqrt{2\p t^3 }}e^{ -\frac{(y+w  t )^2
}{2t}-c y
 }dy=\frac{2}{\sqrt{2\p t} } -( w+c) + o(1)\; \text{ as } \; t \downarrow 0
 \,.\label{kaka3}
 \end{align}
Note that the last identity can be derived from (\ref{pipolo}) by
observing that
 $$
 \int_0 ^a \frac{2y }{\sqrt{2\p t^3 }}e^{ -\frac{(y+w  t )^2
}{2t}-c y
 }dy=e^{\frac{t}{2}(c^2 +2 c w )}e^{ -\frac{(y+(w+c)  t )^2
}{2t}}\,.
$$

As first application of the above observations  and (\ref{rino2}),
we prove the following result:
\begin{lemma}\label{perdonanza}Let  $\hat \a>0 $.  Then
\begin{equation}
\lim _{t\downarrow 0} \int _U \bigl| 1 - e ^{-\a H(f)} \bigr| \II _{ H(f) \leq t  }n (df) =0\,.\label{basilico1}
%& \lim _{t\downarrow 0}\int _U
 %\bigl| 1 - e ^{-\a H(f)} \bigr| \II _{  \{ \sup _{[0,t)} f \geq h\}  } n (df) =0.
%\label{basilico2}
\end{equation}
\end{lemma}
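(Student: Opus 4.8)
The plan is to dominate the integrand by a constant multiple of the lifetime $H(f)$ on the event $\{H(f)\le t\}$, and then to translate the resulting Itô--measure integral into an integral of the tail $n(H>s)$, whose short--time behaviour is already pinned down by the entrance law (\ref{roma}) and the elementary asymptotics (\ref{kaka1})--(\ref{kaka2}).

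First I would record the elementary inequality $|1-e^{-\a s}|\le |\a|\,s\,e^{|\a| s}$, valid for every $s\ge 0$ and every real $\a$ (it follows at once from $1-e^{-\a s}=\int_0^s \a e^{-\a u}\,du$ and $e^{-\a u}\le e^{|\a| u}\le e^{|\a| s}$ for $u\le s$). Restricting to excursions with $H(f)\le t$ and assuming $t\le 1$, this gives
\begin{equation*}
\int_U \bigl| 1 - e^{-\a H(f)} \bigr|\,\II_{H(f)\le t}\, n(df)\;\le\;|\a|\,e^{|\a|}\int_U H(f)\,\II_{H(f)\le t}\, n(df)\,,
\end{equation*}
so it suffices to prove that the last integral tends to $0$ as $t\downarrow 0$.

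Next I would rewrite this integral in terms of the tail of $H$ under $n$. Using $H(f)\,\II_{H(f)\le t}=\int_0^t \II_{s<H(f)\le t}\,ds$ together with Tonelli's theorem (the integrand is nonnegative), I obtain
\begin{equation*}
\int_U H(f)\,\II_{H(f)\le t}\, n(df)\;=\;\int_0^t n\bigl(s<H\le t\bigr)\,ds\;\le\;\int_0^t n\bigl(H>s\bigr)\,ds\,.
\end{equation*}
By (\ref{saratoga}) the tail satisfies $n(H>s)=\int_0^\infty n_s(y)\,dy$, and by the explicit form (\ref{roma}) of $n_s$ together with (\ref{kaka1})--(\ref{kaka2}) applied with $w=\mu$ one gets $n(H>s)=\tfrac{2}{\sqrt{2\pi s}}-\mu+o(1)$ as $s\downarrow 0$. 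In particular the function $s\mapsto n(H>s)$ is integrable near the origin, so $\int_0^t n(H>s)\,ds$ is finite and, by absolute continuity of the Lebesgue integral, converges to $0$ as $t\downarrow 0$. Combining this with the displayed bound yields (\ref{basilico1}).

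The only delicate point is the short--time singularity: the tail $n(H>s)$ blows up like $s^{-1/2}$, so one cannot bound it uniformly. The whole argument hinges on the fact that this singularity is integrable, which is precisely what makes $\int_0^t n(H>s)\,ds\to 0$. I therefore expect the main obstacle to be the passage from the Itô--measure integral to the integrable tail via (\ref{saratoga}) and the asymptotics (\ref{kaka1})--(\ref{kaka2}); the remaining ingredients, namely the exponential inequality and the Tonelli rewriting, are routine.
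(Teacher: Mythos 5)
Your proof is correct, but after the first step it takes a genuinely different route from the paper's. Both arguments start the same way: dominate $\bigl|1-e^{-\a H}\bigr|$ by a constant times $H$ on $\{H\le t\}$ (the paper uses $|1-e^y|\le c|y|$ for $|y|\le 1$, you use $|1-e^{-\a s}|\le|\a|\,s\,e^{|\a|s}$), reducing the lemma to $\int_U H(f)\,\II_{H(f)\le t}\,n(df)\to 0$. From there the paper argues by monotone convergence and the Markov property (\ref{rino2}) at time $\e$: it writes $\int_U H\,\II_{\e<H\le 1}\,dn=\int n_\e(dy)\,\EE^\mu_y\bigl((T_0+\e)\,\II_{T_0\le 1-\e}\bigr)$, bounds this via the explicit expected hitting time (\ref{leone6}) and a Gaussian estimate (obtaining the bound $1/|\mu|$), concludes $\int_U H\,\II_{H\le 1}\,dn<\infty$, and finishes by dominated convergence. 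You instead use the layer-cake/Tonelli identity to get $\int_U H\,\II_{H\le t}\,dn\le\int_0^t n(H>s)\,ds$ and then the entrance-law mass $n(H>s)=\int_0^\infty n_s(y)\,dy=\tfrac{2}{\sqrt{2\pi s}}-\mu+o(1)$ (a computation the paper itself carries out inside the proof of (\ref{kuka2})), so the singularity is integrable and the integral is in fact $O(\sqrt{t})$. Your route buys three things: it is more elementary, needing only the total mass of $n_s$ from (\ref{roma}), (\ref{kaka1})--(\ref{kaka2}) and not the Markov property (\ref{rino2}) nor (\ref{leone6}); it yields an explicit rate $O(\sqrt t)$ rather than a soft dominated-convergence limit; and it works verbatim when $\mu=0$, whereas the paper's terminal bound $1/|\mu|$ degenerates there (immaterial for the lemma as stated, but a sign of robustness). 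Two small points you should make explicit: the identity $n(H>s)=\int_0^\infty n_s(dy)$ holds because an excursion with $H(f)>s$ satisfies $f_s>0$, so $\{H>s\}$ coincides with the event appearing in (\ref{saratoga}); and Tonelli is legitimate because $n$ is $\sigma$-finite (the sets $\{H>1/k\}$ have finite $n$-measure and exhaust $U$).
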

%%%%%%%%%%%%%
\begin{proof}
%Let us start by proving (\ref{basilico1}).
Since for a suitable positive constant $c>0$ it holds $|1-e^y|\leq c
|y|$ if $|y|\leq 1$, the limit
 (\ref{basilico1}) is implied by
 \begin{equation*}%\label{basilico3}
 \lim _{t\downarrow 0} \int _U   H(f) \II _{ H(f) \leq t }n (df) =0.
 \end{equation*}
 Since  excursions are continuous paths and $\lim _{t\downarrow 0} \II _{ H(f) \leq t } =0$
 pointwise, by the  Dominated Convergence Theorem in order   to prove  the above limit it is enough to show that
 \begin{equation}\label{basilico4}
  \int _U   H(f) \II _{ H(f)\leq 1}  n (df) <\infty.
 \end{equation}
 By the Monotone Convergence Theorem
 \begin{equation}\label{basilico5}
  \int _U   H(f)  \II _{H(f)\leq 1} n (df) = \lim  _{\e\downarrow 0} \int  _U H(f)\II_{ \e< H(f)\leq 1 }   n (df) ,
  \end{equation}
 and due to (\ref{rino2})
 \begin{multline}\label{basilico6}
  \int_U   H(f)\II_{ \e< H(f) \leq 1 }   n (df)=
 \int n_\e (dy ) \EE _y ^\mu ((T_0+\e) \II _{T_0\leq 1-\e}  )
 \leq \\
 \int n_\e (dy ) \EE _y ^\mu (T_0 \II _{T_0<\infty }  ) +\e \int n_\e(dy)
  .
 \end{multline}
 Due to (\ref{kaka1}) and (\ref{kaka2}), the last term $\e \int n_\e
 (dy) $ is negligible as $\e \downarrow 0$
 while, due to  (\ref{leone6}),
    \begin{equation}\label{brahms10}
  \int n_\e (dy ) \EE _y ^\mu (T_0 \II _{T_0<\infty }   )
 = \frac{ 1}{\e |\mu| } \int _0 ^\infty \frac{2 y ^2}{\sqrt{ 2 \p \e} } e ^{y(\mu-|\mu|) -(y+\mu \e)^2 / (2\e)  } dy.
 \end{equation}
In order to conclude it is enough to observe that
 \begin{multline*}
 \text{R.h.s. of (\ref{brahms10})} =
 \frac{ 1}{\e |\mu| } \int _0 ^\infty \frac{2 y ^2}{\sqrt{ 2 \p \e} } e ^{-\frac{(y+|\mu| \e)^2}{ 2\e}  }
 dy\leq\\\frac{ 1}{\e |\mu| }\int _0^\infty   \frac{2 (y+|\mu|\e) ^2}{\sqrt{ 2 \p \e} } e ^{-\frac{(y+|\mu| \e)^2 }{2\e}  }
 dy= \frac{1}{|\mu|}<\infty\,.
 \end{multline*}

 % Considering separately the cases $\mu>0$ and $\mu<0$, one can check  that
 % $$
 % y^2  e ^{y(\mu-|\mu|)}\leq c\,(y+\mu \e) ^2    ,\qquad \forall y\geq
 % 0,\; \forall \e \in (0,1) \,,
 % $$
 % for a suitable positive constant $c$. Therefore from (\ref{brahms10}) we derive
%    \begin{equation}\label{brahms20}
 % \int n_\e (dy ) \EE _y ^\mu (T_0 \II _{T_0<\infty }   )
 %\leq \frac{ c}{\e |\mu| } \int _\RR  \frac{2 (y +\mu \e)^2 }{\sqrt{ 2 \p \e} } e ^{-(y+\mu \e)^2 / (2\e)  } dy=\frac{2c}{|\mu|}.
 %\end{equation}
 % The above bound, uniform in $\e>0$, together with (\ref{basilico5}), (\ref{basilico6}) and (\club)
 % implies (\ref{basilico4}), thus concluding the proof of (\ref{basilico1}).

     \end{proof}

\bigskip
Now we have  all the technical tools in order to prove Lemma \ref{kuka}.
%%%%%%%%%%%%%%%%%%%%%%%%%%%%%%%%%%%
%%%%%%%%%%%%%%% prova di kuka1
%%%%%%%%%%%%%%%%%%%%%%%%%%%%%%%%%%%
\subsection{Proof of (\ref{kuka1})}

Consider the measurable subsets $ U_t ^{h,+}=\bigl\{ f\in U: \sup
_{s\geq t} f_s\geq h,\, H(f)>t \bigr\} $. Then $U_{t_2}^{h,+}\subset
U_{t_1}^{h,+}$ for $t_1\leq t_2 $ and, by the continuity of
excursions, $U^{h,+}=\cup _{t>0 } U_t ^{h,+}. $ This implies that $
n \bigl( U^{h,+ } \bigr)= \lim _{t\downarrow 0 }   n \bigl( U_t^{h,+
} \bigr)$ .
 Due to (\ref{rino2} )
 \begin{equation*}%\label{koeln2}
 n \bigl( U_t^{h,+ } \bigr)= \int _0 ^\infty n_t (dy )
\PP _y ^\mu \left( {\sup}_{s\geq 0}
 B_{s\wedge T_0} \geq h \right)
 =
   \int _0 ^\infty n_t (dy ) \PP _y ^\mu \left(
 T_h<T_0\right) . \end{equation*}
 Hence (see also  (\ref{leone4}))  we get that
\begin{equation*}%\label{14marzo}
n \bigl( U^{h,+ } \bigr)   = \lim _{t\downarrow 0 } \left( I_1(t) +I_2(t)\right),
  \end{equation*}
where
%\begin{align*}
$$ I_1(t) =\int _h ^\infty \frac{2y}{\sqrt{2\p t^3 }} e^{-\frac{(y+\mu t )^2 }{2t}}dy
,\qquad   I_2(t)= \int _0 ^h \frac{2y}{\sqrt{2\p t^3 }}
e^{-\frac{(y+\mu t )^2 }{2t}} e^{\mu (y-h)}\frac{ \sinh (\mu y )
}{\sinh (\mu h ) }dy \,.$$
%\end{align*}
Due to (\ref{kaka2})
%\begin{equation}\label{cardio}
$\lim _{t\downarrow 0} I_1 (t) = 0$. In order to treat the term
$I_2$ we write
$$
I_2 (t) = \left( I_3 (t) - I_4 (t) \right) / ( 1- e^{2\mu h }),
$$
where
\begin{align*}
&I_3(t) = \int _0^h  \frac{2y}{\sqrt{2\p t^3 }} e^{-\frac{(y+\mu t )^2 }{2t}}dy ,\\
& I_4(t) =  \int _0^h  \frac{2y}{\sqrt{2\p t^3 }} e^{-\frac{(y+\mu t )^2 }{2t}+2\mu y }dy=
\int _0^h  \frac{2y}{\sqrt{2\p t^3 }} e^{-\frac{(y-\mu t )^2 }{2t}}dy.
\end{align*}
Due to (\ref{kaka1}), $ I_3 (t) =  \frac{2}{\sqrt{2\p t }} -\mu
+o(1)$ and $  I_4 (t) =  \frac{2}{\sqrt{2\p t }} +\mu +o(1)$.
 In particular,
$$n\bigl(U^{h,+}\bigr)=\lim _{t\downarrow 0 } I_2(t) = \frac{2\mu}{e^{2\mu h}-1 }= \frac{\mu e^{-\mu h }}{\sinh (\mu h )}  ,
$$
thus concluding the proof of (\ref{kuka1}).

\qed

%%%%%%%%%%%%%%%%%%%%%%%%%%%%%%%%%%
%%%%%%%%%%%%%%%%%%%%%%%%%%%%%%%%%%
%
%                        kuka4
%
%%%%%%%%%%%%%%%%%%%%%%%%%%%%%%%%%%

\subsection{Proof of (\ref{kuka4})}
Consider the subsets $U_t\subset U$ defined as $ U_t = \bigl\{ f\in
U: H(   f ) =\infty,\, \sup _{s\geq t} f_s <h \bigr \}.
$
Then $ U^{h,-}\cap U_\infty \subset U_t $ and, in order to prove
(\ref{kuka4}), it is enough to show that $ \lim _{t\downarrow 0} n
(U_t ) = 0 $.  Due to (\ref{rino2}) we can write
$$
n ( U_t) =\int _0 ^\infty n_t (dy) \PP _y^\mu ( T_h=T_0=\infty).
$$
Note that if $\mu>0$ then $ \PP _y^\mu ( T_0=\infty)=0$ for all $y>0$, while
if   $\mu<0$ then $ \PP _y^\mu ( T_h=\infty)=0$ for all $y<h$. Hence
$$
n(U_t)=\int _h ^\infty n_t (dy) \PP _y^\mu ( T_h=T_0=\infty)\leq  \int _h ^\infty n_t (dy)  .
$$
By (\ref{kaka2}) the last member above goes to $0$ as $t\downarrow
0$.  This implies that  $ n (U_t ) =o (1) $, thus concluding the
proof of (\ref{kuka4}).

\qed

%%%%%%%%%%%%%%%%%%%%%%%%%%%%%%%%%%%
%%%%%%%%%%%%%%% prova di kuka2
%%%%%%%%%%%%%%%%%%%%%%%%%%%%%%%%%%%
\subsection{Proof of (\ref{kuka2})}

%In order to prove (\ref{kuka2}) we need the following result:

We claim that
\begin{equation}\label{violinoooo}
  \int _{U^{h,-}} \left( 1-e^{-\a H(f)}\right) n(df) =
  \lim _{t\downarrow 0}
  \int _U  \left( 1-e^{-\a H(f)}\right)\II_{ \{\sup _{[t,\infty)} f < h \}} \II _{ H(f)>t } n (df ) .
  \end{equation}
  Indeed, the above identity  follows from the Dominated Convergence Theorem if we prove that
  \begin{equation}\label{ragione}
   \int _{U^{h,-}} \bigl|1-e^{-\a H(f)}\bigr| n(df)  <\infty.
  \end{equation}
  To this aim,
  we observe that, for all $t>0$,
    \begin{equation}\label{doma}
    \int _{U^{h,-}} \bigl|1-e^{-\a H(f)}\bigr| n(df) \leq   \int _U  \bigl| 1-e^{-\a H(f)}\bigr| \II_{ \{\sup _{[t,\infty)} f < h \}} n (df ) ,
  \end{equation}
  and by (\ref{basilico1})
  \begin{multline}\label{doma1}
  \limsup _{t\downarrow 0 } \left( \text{r.h.s. of } (\ref{doma})\right)=  \limsup _{t\downarrow 0}
  \int _U  \bigl|1-e^{-\a H(f)}\bigr| \II_{ \{\sup _{[t,\infty)} f < h \}} \II _{ H(f)>t }  n (df ) =\\
  \limsup  _{t\downarrow 0}
 \text{sgn} (\a)   \int _U  \bigl(1-e^{-\a H(f)}\bigr)\II_{ \{\sup _{[t,\infty)} f < h \}} \II _{ H(f)>t }  n (df )   .\end{multline}

We claim that
\begin{equation}\label{mosca}
 \lim _{t\downarrow 0}
    \int _U  \bigl(1-e^{-\a H(f)}\bigr)\II_{ \{\sup _{[t,\infty)} f < h \}} \II _{ H(f)>t }  n (df )  = \sqrt{2\hat \a}
    \coth  \left( h \sqrt{ 2 \hat \a }\right) -\mu \coth (\mu h)   .
        \end{equation}
    Note that (\ref{mosca}) implies  that the r.h.s. of (\ref{doma}) is bounded and this implies  (\ref{ragione}), which implies  (\ref{violinoooo}), which together with (\ref{mosca}) implies    (\ref{kuka2}).

    \bigskip

    Let us prove (\ref{mosca}). By (\ref{rino2})
 \begin{equation}\label{russia}
 \text{l.h.s. of } (\ref{mosca})=
  \lim _{t\downarrow 0} \int _0 ^\infty n_t (y ) \EE _y ^\mu \left( \II _{T_0 <T_h }
 \left( 1 - e ^{-\a T_0+\a t }\right) \right) dy  =\lim _{t\downarrow 0 }\left( J_1 (t) -e^{\a t} J_2 (t) \right),
 \end{equation}
 where
 \begin{align*}
 &
 J_1 (t) = \int _ 0 ^\infty n_t (y )  \PP _y ^\mu \left ( T_0 < T_h \right)dy ,\\
 &
 J_2 (t) =  \int _0 ^\infty n_t (y ) \EE _y ^\mu \left( \II _{T_0 <T_h }
  e ^{-\a T_0 } \right)dy= \int _0 ^h  n_t (y ) \EE _y ^\mu \left( \II _{T_0 <T_h }
  e ^{-\a T_0 } \right)dy   .
 \end{align*}
 Due to the identities derived at the beginning of  the proof of (\ref{kuka1}) we can
 write
 $$
  \lim _{t\downarrow 0 }
  \int _ 0 ^\infty n_t (y )  \PP _y ^\mu \left ( T_h < T_0 \right) = n\bigl (U^{h,+}\bigr)=  \frac{ \mu e^{-\mu h }}{\sinh (\mu h ) }   ,
  $$
while due to  (\ref{kaka1}) and (\ref{kaka2})
$$
\int _0 ^\infty n_t (y )dy =\frac{2}{\sqrt{2\p t }} -\mu + o(1).
$$
The above identities give
\begin{equation}\label{15marzo}
J_1 (t) = \frac{2}{\sqrt{2\p t }} -\mu  - \frac{ \mu e^{-\mu h }}{\sinh (\mu h ) } +  o(1)=\frac{2}{\sqrt{2\p t }} -\mu \coth (\mu h).
  \end{equation}
 Due to (\ref{leone1})
 \begin{multline*}
 J_2 (t) =
  \int _0 ^h    \frac{2y}{\sqrt{2\p t^3 }} e^{-\frac{(y+\mu t )^2 }{2t}+\mu y}
  \frac{ \sinh \left( (h-y ) \sqrt{2 \hat \a } \right) }{\sinh \left( h \sqrt{ 2\hat \a}\right) }
  dy   =\\
  \frac{ e^{h\sqrt{2\hat \a}}}{ 2\sinh \left( h \sqrt{ 2 \hat \a }\right) } \int _0 ^h
   \frac{2y}{\sqrt{2\p t^3 }} e^{-\frac{(y+\mu t )^2 }{2t}- y\left(  \sqrt{2\hat \a} -\mu\right)}dy \\-
    \frac{ e^{-h\sqrt{2\hat \a}}}{ 2\sinh \left( h \sqrt{ 2 \hat \a }\right) } \int _0 ^h
   \frac{2y}{\sqrt{2\p t^3 }} e^{-\frac{(y+\mu t )^2 }{2t}+ y\left( \sqrt{2\hat \a}+\mu
   \right)}dy  .   \end{multline*}
Hence,  due to (\ref{kaka3}),
\begin{multline}\label{15marzobis}
 J_2 (t) = \frac{ e^{h\sqrt{2\hat \a}}}{ 2\sinh \left( h \sqrt{ 2 \hat \a }\right) }
\left( \frac{2}{\sqrt{2\p t }} -\sqrt{ 2\hat \a} + o(1) \right) -   \frac{ e^{-h\sqrt{2\hat \a}}}{ 2\sinh \left( h \sqrt{ 2 \hat \a }\right) }
\left( \frac{2}{\sqrt{2\p t }}-\sqrt{ 2\hat \a} + o(1) \right)=\\
  \frac{2}{\sqrt{2\p t }}-\sqrt{2\hat \a}
    \coth  \left( h \sqrt{ 2 \hat \a }\right)  .
      \end{multline}
      Then (\ref{mosca}) follows from (\ref{russia}),
 (\ref{15marzo}) and (\ref{15marzobis}), thus concluding the proof of (\ref{kuka2}).
 \qed
%%%%%%%%%%%%%%%%%%%%%%%%%%%%%%%%%%%
%%%%%%%%%%%%%%% prova di kuka3
%%%%%%%%%%%%%%%%%%%%%%%%%%%%%%%%%%%
\subsection{Proof of (\ref{kuka3})}
Due to the Monotone Convergence Theorem
\begin{equation}\label{sassone}
 \int _U e^{-\a T_h }\II_{T_h<T_0}  n(df) = \lim _{t\downarrow 0}
 \int _U e^{-\a T_h }\II_{t<T_h<T_0}    n(df) .
  \end{equation}
It is convenient to write the last integral as $ A(t)-B(t)$,
 where
 \begin{align*}
 &  A(t)=
 e^{-\a t}
\int _U e^{-\a T_h (\theta_t f)  }\II_{T_h(\theta_t f)<T_0(\theta_t f)} \II _{H(f)>t}  n(df)\, ,\\
& B(t)=e^{-\a t}
\int _U e^{-\a T_h (\theta_t f)  }\II_{T_h(\theta_t f)<T_0(\theta_t f)}\II _{H(f)>t}\II _{T_h\leq t }  n(df)\,.
\end{align*}
Hence
\begin{equation}\label{airberlin}
\int _U e^{-\a T_h }\II_{T_h<T_0}  n(df) = \lim _{t\downarrow 0}
A(t)-\lim _{t\downarrow 0} B(t)\,.
\end{equation}
By    (\ref{rino2})
\begin{equation}\label{corno0}
\lim _{t\downarrow 0} A(t)= \lim _{t\downarrow 0} e^{-\a t} \int _0 ^\infty n_t (dy)
 \EE _y ^\mu
 \left( e^{-\a T_h } \II _{T_h < T_0 }\right)=\lim _{t\downarrow 0} \left(K_1 (t)+K_2(t)\right),
 \end{equation}
where
\begin{align*}
 K_1(t) & =  \int _0 ^h n_t (dy)  \EE _y ^\mu \left( e^{-\a T_h } \II _{T_h < T_0 }\right),\\
 K_2(t) & =  \int _h ^\infty n_t (dy)  \EE _y ^\mu \left( e^{-\a T_h } \II _{T_h < T_0 }\right)=
 \int _h ^\infty n_t (dy)  \EE _y ^\mu \left( e^{-\a T_h } \II _{T_h<\infty} \right) \\
& =
  \int _h ^\infty n_t (dy)  \EE _{y-h} ^\mu \left( e^{-\a T_0 }  \II _{T_0<\infty}     \right)
 .
\end{align*}
Due to (\ref{leone2})
  \begin{multline}
K_1(t)=
 \int _0^h
  \frac{2y}{\sqrt{2\p t^3 }} e^{-\frac{(y+\mu t )^2 }{2t}+\mu (y-h ) }\frac{
 \sinh\left(y\sqrt{2\hat \a }\right)}{   \sinh\left(h \sqrt{2\hat \a }\right) }   dy=\\
 \frac{ e^{-\mu h }}{ 2  \sinh \left(h \sqrt{2\hat \a }\right) }
 \int_0 ^h
   \frac{2y}{\sqrt{2\p t^3 }} e^{-\frac{(y+\mu t )^2 }{2t}+\mu y  }\left( e^{y \sqrt{2\hat \a }}-e^{-y
    \sqrt{2\hat \a }}\right) dy\,.
     \end{multline}
     By applying   (\ref{kaka3}) to the r.h.s. we get that
   \begin{equation}\label{corno1}
    K_1 (t) = \frac{
e^{-\mu h } \sqrt{ 2 \hat \a }
}{ \sinh \left( h \sqrt{ 2\hat \a }\right) }+ o(1).
\end{equation}
  By (\ref{leone5})
  \begin{equation}
   K_2 (t) = \int_h ^\infty
   \frac{2y}{\sqrt{2\p t^3 }} e^{-\frac{(y+\mu t )^2 }{2t}+(\mu  - \sqrt{2\hat \a})(y-h) }dy=   e^{\a t+h (\sqrt{2\hat\a} -\mu)   }\int_h ^\infty
    \frac{2y}{\sqrt{2\p t^3 }} e^{-\frac{(y+\sqrt{2 \hat \a} t )^2 }{2t} }dy
   \end{equation}
  and due to (\ref{kaka2})
%   \begin{equation}\label{corno2}
 $ \lim _{t\downarrow 0} K_2 (t) =0$.
%  \end{equation}
 This limit together with (\ref{corno0}) and  (\ref{corno1}) implies
 that
  \begin{equation}\label{grande1}
 \lim _{t\downarrow 0 } A(t)=   \frac{
e^{-\mu h } \sqrt{ 2 \hat \a }
}{ \sinh \left( h \sqrt{ 2\hat \a }\right) }.
\end{equation}
We claim that $ \lim _{t\downarrow 0} B(t) =0$. Note that this
together with   (\ref{airberlin}),  (\ref{grande1})  and
(\ref{kuka1}) implies (\ref{kuka3}). Hence, in order to conclude we
only need to prove  our claim. To this aim we apply H\"older
inequality with exponents  $p,q >1$ with $1/p+1/q=1$ and
$\widehat{\a p }
>0$, deriving that
$$ B(t) \leq e^{-\a t} \left[\int _U e^{-\a p  T_h (\theta_t f)  }\II_{T_h(\theta_t f)<T_0(\theta_t f)}\II _{H(f)>t}  n(df)
\right]^{1/p} n ( T_h \leq t )^{1/q}\,.
$$
As $t \downarrow 0$ the first factor in the r.h.s. goes to $1$, the
second factor has finite limit  due to (\ref{grande1}) where $\a$
has to be replaced by $\a p $ (here we use that $\widehat{\a p}
>0$). Hence we only need to prove that $ \lim _{t\downarrow 0} n (
T_h \leq t )=0$. By the Monotone Convergence Theorem it is enough to
show that $n(T_h \leq 1 )<\infty$.  But $n(T_h \leq 1) \leq n
(U^{h,+})$ which is bounded by (\ref{kuka1}).

\qed

\medskip

\medskip

\noindent {\bf Acknowledgements}. The author thanks A. Bovier, F. Le
Gall and Z. Shi for useful discussions. Moreover, she kindly
acknowledges the Weierstrass Institute for Applied Analysis and
Stochastics in Berlin   for the kind hospitality  while part of this
work was being done.

\end{document}